\numberwithin{equation}{section}
\newtheorem{Theorem}{Theorem}[section]
\newtheorem{Lemma}[Theorem]{Lemma}
\newtheorem{Proposition}[Theorem]{Proposition}
{\theoremstyle{definition}
\newtheorem{Definition}[Theorem]{Definition}
 }
\newcommand{\rk}{\operatorname{r}}
\begin{document}


\newcommand{\arXivNumber}{1504.07165}

\renewcommand{\PaperNumber}{030}

\FirstPageHeading

\ShortArticleName{Polynomial Invariants for Arbitrary Rank~$D$ Weakly-Colored Stranded Graphs}

\ArticleName{Polynomial Invariants for Arbitrary Rank~$\boldsymbol{D}$\\ Weakly-Colored Stranded Graphs}

\Author{Remi Cocou AVOHOU}

\AuthorNameForHeading{R.C.~Avohou}

\Address{International Chair in Mathematical Physics and Applications, ICMPA-UNESCO Chair,\\
072BP50, Cotonou, Republic of Benin}
\Email{\href{avohouremicocou@yahoo.fr}{avohouremicocou@yahoo.fr}}

\ArticleDates{Received June 26, 2015, in f\/inal form March 14, 2016; Published online March 22, 2016}

\Abstract{Polynomials on stranded graphs are higher dimensional generalization of Tutte and
 Bollob\'as--Riordan polynomials [\textit{Math.~Ann.}~\textbf{323} (2002), 81--96]. Here, we deepen the analysis of the polynomial invariant def\/ined on rank~3 weakly-colored stranded graphs introduced in arXiv:1301.1987. We successfully f\/ind in dimension $D\geq3$ a modif\/ied Euler characteristic with $D-2$ parameters. Using this modif\/ied invariant, we extend the rank~3 weakly-colored graph polynomial, and its main properties, on rank~4 and then on arbitrary rank~$D$ weakly-colored stranded graphs.}

\Keywords{Tutte polynomial; Bollob\'as--Riordan polynomial; graph polynomial invariant; colored graph;
Ribbon graph; Euler characteristic}

\Classification{05C10; 57M15}

\vspace{-2mm}

\section{Introduction}

	The polynomial invariant for $3D$ weakly-colored graphs introduced in~\cite{remia} is a polynomial inva\-riant which extends both the Tutte and Bollob\'as--Riordan (BR) polynomials~\cite{bollo, tutte} from graph and ribbon graphs to a family of combinatorial objects called stranded graphs. This polynomial obeys a particular contraction/cut recurrence relation which replaces the contraction/deletion relation satisf\/ied by Tutte and BR polynomials. Let us review now in greater details the context where these graphs appear and their invariant.

\looseness=-1
The objects that we investigate are called stranded graphs and they still arouse the interest of the mathematicians and physicists \cite{avo,sefu3, Geloun:2009pe,Gurau:2010nd,Gurau:2011xp}. Such stranded graphs are made with stranded vertices which are chord diagrams and stranded edges which are collections of segments. In \cite{Gurau:2010nd}, it has been proved that the colored version of these graphs are dual to simplicial pseudo-manifolds in any dimension $D$. Restricted to $2D$, the same class of graphs corresponds to ribbon graphs or rank~2 stranded graphs with vertices of coordination~3. Considering the ribbon vertices as $0$-cells, lines or edges as $1$-cells and faces as $2$-cells, ribbon graphs become two dimensional cellular complexes. However, the cellular complex structure has been not yet generalized for all stranded graphs in higher dimensions but only on those which are colored~\cite{Gurau:2009tw}. In a colored graph, a $p$-cell or $p$-bubble is def\/ined as a connected subgraph made only of lines of~$p$ chosen colors.

Colored tensor graphs are specif\/ic stranded graphs \cite{Gurau:2010nd} on which we impose a f\/ixed coordination of vertices and an edge coloring. The contraction of an edge modif\/ies this coordination and then destroys the color structure. This has led to the enlargement of the family of graphs from the colored tensor graphs to what will be called weakly-colored (w-colored) stranded graphs for which contraction and a notion similar to the deletion make sense.

Stranded vertices are furthermore decorated with half-edges called ``half lines'' (or external legs in quantum f\/ield theory framework \cite{Gurau:2009tz, Gurau:2009tw}). The graphs with decorated vertices are very useful in physics. For instance, it was def\/ined on these graphs a~generalized Symanzik polynomial with very nice properties in quantum f\/ield theory due to the importance of the external legs~\cite{riv}. The notion of half-edge~\cite{tutte} is consistent with the operation that involves cutting an edge of a~given graph. The cut of an edge of a~graph means removing this edge and let two half-edges attached to its incidence vertices (or vertex in the loop situation). Moreover, this operation brings some modif\/ications on the ``boundary of the graph''. The boundary graph encodes the boundary of the simplicial complex dual to the stranded graph. We now distinguish two kinds of faces: closed strands, homeomorphic to~$S_1$ which def\/ine the internal faces and the remaining which are homeomorphic to $(0,1)$ def\/ine the external faces.

The generalization of the universal invariant, namely Tutte polynomial, from graphs to ribbon graphs \cite{bollo3,bollo2,bollo,ellis0} was performed by Bollob\'as and Riordan by adding two supplementary variables to the Tutte polynomial. One variable stands for the orientability of the ribbon seen as surface and the other one for an invariant related to the genus of the corresponding ribbon graph.

The Bollob\'as--Riordan polynomial invariant was generalized by Tanasa in \cite{Tanasa:2010me} where one supplementary variable keeps track of the sum of genera \cite{Gurau:2012ix, Ryan} of the $3$-bubbles (surfaces) of a rank~3 tensor graph. A similar procedure proves to be applicable for f\/inding an invariant on rank 3 w-colored graphs. We emphasize that, several other potentially interesting invariant candidates under contraction and cut can be identif\/ied. In $D\geq 4$, we must expect a much richer structure of the possible invariants.

In this paper, we discover several possibilities when we deal with $D\geq 4$. We provide two combinatorial methods to generate a generalized Euler characteristic which can be used to def\/ine an extension of the invariant in rank~3 determined in~\cite{remia}. Furthermore, for each choice of invariant, it appears possible to parametrize the invariant by a f\/inite family of positive rational numbers. As a consequence, the polynomial invariant built on these invariants will acquire new parameters $\alpha_k\in \mathbb{Q}^{+}$, $k=3, \dots, n$. The modif\/ied Euler characteristic $\gamma_{n;\boldsymbol{\alpha}}$ of a rank $n\geq 3$ weakly-colored graph $\mathcal{G}$ is given by
\begin{gather*}
\gamma_{n;\boldsymbol{\alpha}}=\frac{n(n-1)}{2}(V-E) + (n-1)F_{{\rm int}}- (2+(n-2)\alpha_3)B^3
\\
\hphantom{\gamma_{n;\boldsymbol{\alpha}}=}{} + \sum_{k=4}^n \big[(k-1)\alpha_{k-1}-(n-k+1)\alpha_k\big]B^k,
\end{gather*}
where $V$, $E$, $F_{{\rm int}}$ and $B^k$ are respectively the number of vertices, edges, internal faces and $k$-bubbles of $\mathcal{G}$. Our f\/irst main result stated in Theorem~\ref{theo:contens4} describes the relations satisf\/ied by the polynomial invariant in rank~4. By induction, we determine a polynomial invariant on arbitrary rank $D$ w-colored graphs. Our second main result is the contraction/cut recurrence relation (Theorem~\ref{theo:contensn}) satisf\/ied by this polynomial. The usual Tutte and BR polynomials can be recovered after restricting to the proper graph theory rank and after an appropriate change of variables.

The organization of the rest of the paper is the following. In the next section, we give a brief review of the polynomial invariant on rank 3 w-colored stranded graphs. Section \ref{subset:polymt} is dedicated to the extension of this polynomial on rank 4 w-colored stranded graphs. By an induction procedure we generalize in Section~\ref{subset:polymtn}, the rank 4 polynomial to arbitrary rank $D$ w-colored stranded graphs. Finally, an appendix provides a detailed analysis of various other interesting invariants which could be alternatively used for def\/ining new polynomials extending Tutte and BR in any dimension $D$.

\section{Weakly-colored graphs and the rank 3 polynomial invariant}
\label{sect:backg}
We brief\/ly recall here the def\/inition of the polynomial invariant def\/ined for rank~3 w-colored graphs introduced in \cite{remia}. We also address its main properties which will f\/ind extension on higher rank graphs.

A graph $\mathcal{G}(\mathcal{V},\mathcal{E})$ is def\/ined as a set of vertices $\mathcal{V}$ and of edges $\mathcal{E}$ together with an incidence
relation between them. $\mathcal{G}(\mathcal{V},\mathcal{E})$ is stranded when its vertices and edges are stranded.

\begin{Definition}[stranded vertex and edge]\label{def:svert}
A rank $D$ stranded vertex is a chord diagram
that is a collection of $2n$ points on the unit circle (called the vertex
frontier) paired by $n$ chords, satisfying:
\begin{enumerate}\itemsep=0pt
\item[(a)] the chords may cross, but do not intersect;
\item[(b)] the chord end points are partitioned
 into sets called pre-edges with $0,1,2,\dots$, or~$D$ elements;
these points should lie on a single arc on the frontier with
no other end points on this arc;
\item[(c)] the pre-edges should form a connected collection
that is, by merging all points in each pre-edge and
by removing the vertex frontier, the resulting
graph is connected.
\end{enumerate}

The coordination (also called valence or degree) of a rank $D >0$ stranded vertex
is the number of its non-empty pre-edges.
By convention: (C1)~we include a particular vertex made with one
 disc and assume that it is a stranded vertex of any rank made
 with a unique closed chord and~(C2) a point is a rank 0 stranded vertex.

A rank $D$ stranded edge is a collection of segments called strands such that:
\begin{enumerate}\itemsep=0pt
\item[(a$'$)] the strands are not intersecting (but
can cross without intersecting);
\item[(b$'$)] the end points of the strands can be
partitioned in two disjoint parts called sets of end segments of the
edge such that a strand cannot have its end points
in the same set of end segments;
\item[(c$'$)] the number of strands is~$D$.
\end{enumerate}
\end{Definition}

Some illustrations of stranded vertices and edges with rank $D=4,5$, respectively, are provided in Fig.~\ref{fig:vxedge}.
\begin{figure}[h]
 \centering
\includegraphics[angle=0, width=10cm, height=3cm]{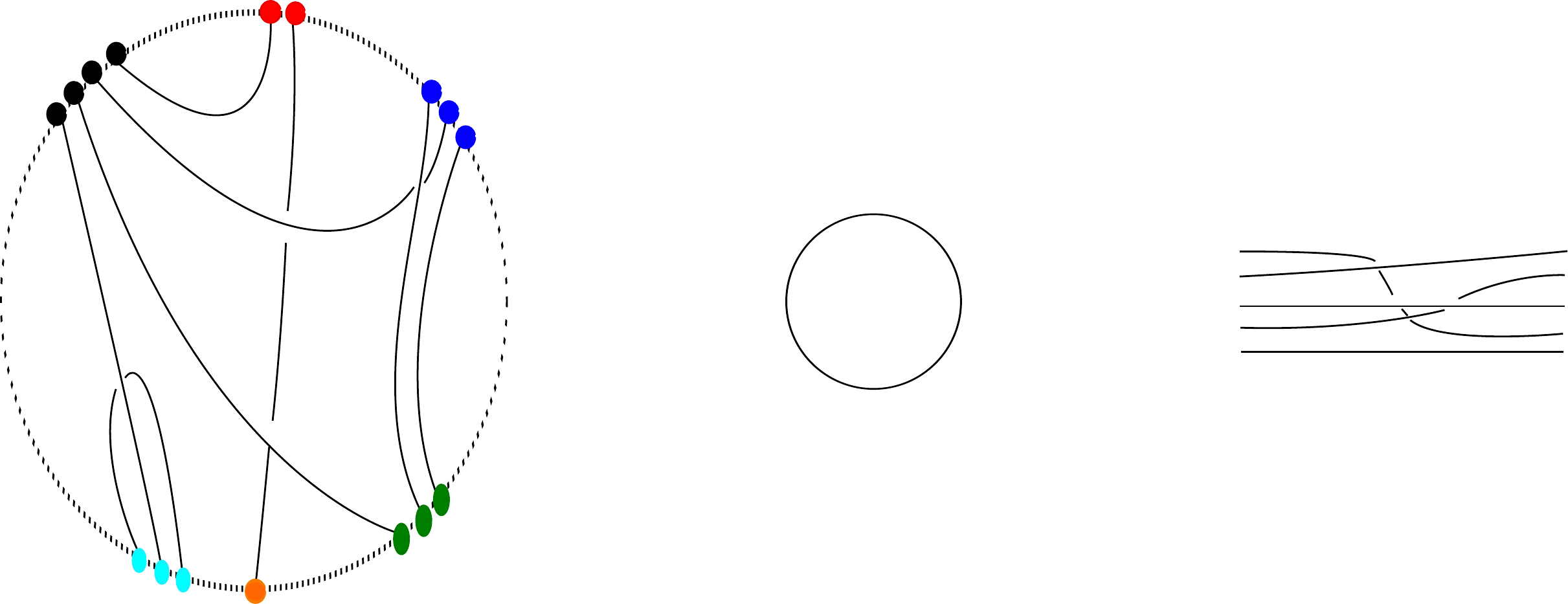}
\put(-240,-10){$v$}
\put(-130,15){$d$}
\put(-35,15){$e$}

\caption{A rank 4 stranded vertex $v$ of coordination~6, connected pre-edges highlighted with dif\/ferent colors; a trivial disc vertex~$d$; a rank~5 edge~$e$ with non parallel strands.}\label{fig:vxedge}
\end{figure}

\begin{Definition}[stranded and tensor graphs]\label{def:tens}
A rank $D$ stranded graph $\mathcal{G}$ is a graph $\mathcal{G}(\mathcal{V},\mathcal{E})$
which admits:
\begin{enumerate}\itemsep=0pt
\item[(i)] rank $D$ stranded vertices;
\item[(ii)] rank at most $D$ stranded edges;
\item[(iii)] one vertex and one edge intersect
by one set of end segments of the edge which should coincide with a~pre-edge at the vertex frontier;
All intersections of vertices and edges are pairwise distinct.
\end{enumerate}

A rank $D$ tensor graph $\mathcal{G}$ is a rank $D$ stranded graph such that:
\begin{enumerate}\itemsep=0pt
\item[(i$'$)] the vertices of $\mathcal{G}$ have a f\/ixed coordination $D+1$
and their pre-edges have a f\/ixed cardinal~$D$. The merged point graph is $K_{D+1}$ (or vertex graph);
\item[(ii$'$)] the edges of~$\mathcal{G}$ are of rank~$D$.
\end{enumerate}
\end{Definition}

Consider a stranded graph. Collapsing its stranded vertices to points and edges to simple lines, the resulting object is a graph. A stranded graph is said to be connected if its correspon\-ding collapsed graph is connected. From this point, stranded vertices and edges are claimed connected. 		

Before proceeding further, let us present the notion of colorable (tensor) graphs~\cite{Gurau:2010nd}.

\begin{Definition}[colored and bipartite graphs]\label{def:colbi}
 A $(D+1)$ colored graph is a graph
together with an assignment of a color belonging to the set $\{0,\dots,D\}$ to each of its edges
such that no two adjacent edges share the same color.

A bipartite graph is a graph whose set $\mathcal{V}$ of vertices
is split into two disjoint sets, i.e., $\mathcal{V} = \mathcal{V}^+ \cup \mathcal{V}^-$
with $\mathcal{V}^+\cap\mathcal{V}^-= \varnothing$, such that each edge
connects a vertex $v^+ \in \mathcal{V}^+$ and a vertex
$v^- \in \mathcal{V}^-$.
\end{Definition}

\begin{Definition}[colored tensor graph~\cite{Gurau:2009tw,Gurau:2011xp}]\label{def:coltens}
A rank $D\geq 1$ colored tensor graph $\mathcal{G}$ is a graph such that:
\begin{itemize}\itemsep=0pt
\item $\mathcal{G}$ is $(D+1)$ colored and bipartite;
\item $\mathcal{G}$ is a rank $D$ tensor graph.
\end{itemize}
\end{Definition}

The collapsed graph coming from a colored tensor graph is merely obtained by regarding the tensor graph as a simple bipartite colored graph and is called \emph{compact} in the following.

Certainly, in such a colored graph, there is a lot more information that we address now.

\begin{Definition}[$p$-bubbles \cite{Gurau:2009tw}]
\label{def:pbub}
Let $\mathcal{G}$ be a rank $D$ colored tensor graph.
\begin{itemize}\itemsep=0pt
\item[--] A 0-bubble is a vertex of $\mathcal{G}$.

\item[--] A 1-bubble is an edge of $\mathcal{G}$.

\item[--] For all $p \geq 2$, a $p$-bubble of $\mathcal{G}$ with colors $i_1 < i_2 < \cdots<i_p $, $p\leq D$,
and $i_k\in \{0, \dots, D\}$ is a connected rank $p-1$ colored tensor graph the compact form of which is a connected subgraph
of the compact form of $\mathcal{G}$ made of edges of colors $\{i_1, \dots, i_p\}$.
\end{itemize}
 \end{Definition}

	The $p$-bubbles must not be confused with the notion of subgraphs used in the def\/inition of the polynomial invariants. They are basic components of combinatorial graphs and generate an associated homology~\cite{Gurau:2009tw}. A 2-bubble is also called a face.

We now introduce the notion of half-edge which allows to address the operation called ``cut'' of an edge. This operation is dif\/ferent from the usual edge deletion and it is used to def\/ine another category of subgraphs called ``c-subgraphs''.

\begin{Definition}[rank~$D$ stranded half-edge]\label{def:tensflag}
A rank $D$ stranded half-edge is a collection of
$D$ parallel segments called strands satisfying the same properties of strands of rank~$D$ edges but the stranded half-edge is incident to a unique rank~$D' $ stranded vertex, with $D' \geq D$, by one of its set of end segments without forming a loop.

A rank~$D$ stranded half-edge has two sets of end segments: one touching a vertex
and another called free or external set of end segments,
the elements of which are called themselves free or external segments.
The~$D$ end-points of all free segments are called external points
of the rank~$D$ stranded half-edge.
\end{Definition}

A stranded graph having stranded half-edges is called half-edged stranded graph and denoted $\mathcal{G}(\mathcal{V},\mathcal{E},\mathfrak{f}^0)$ or simply $\mathcal{G}_{\mathfrak{f}^0}$, with $\mathfrak{f}^0$ the set of the half-edges.

\begin{Definition}[cut of an edge \cite{riv}]\label{def:cutedtens}
Let $\mathcal{G}(\mathcal{V},\mathcal{E},\mathfrak{f}^0)$ be a rank D stranded graph
and~$e$ a rank~$d$ edge of $\mathcal{G}_{\mathfrak{f}^0}$, $1\leq d \leq D$.
The cut graph $\mathcal{G}_{\mathfrak{f}^0} \vee e$ or the graph obtained from $\mathcal{G}_{\mathfrak{f}^0}$ by cutting $e$ is obtained by replacing the edge $e$ by two rank $d$ stranded half-edges at the end vertices of~$e$ and respecting the strand structure
of~$e$, see Fig.~\ref{fig:cuttens}. If~$e$ is a loop, the two stranded half-edges are on the same vertex.
\end{Definition}

\begin{figure}[h]
 \centering
\includegraphics[angle=0, width=5cm, height=1cm]{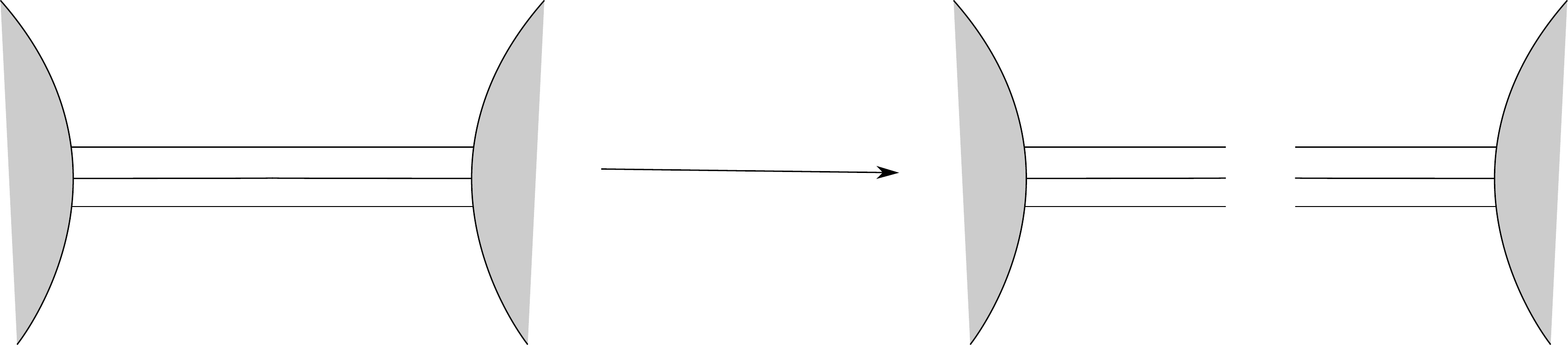}
\caption{Cutting a rank 3 stranded edge.}
\label{fig:cuttens}
\end{figure}

Rank D stranded half-edges can be considered on colored tensor graphs by respecting the following condition in addition to Def\/inition~\ref{def:tens}: to each edge and stranded half-edges, one assigns a color $i \in \{0,1,\dots,D\}$ such that no two adjacent stranded half-edges share the same color. As well, the colored edge can be cut in the same sense of Def\/inition~\ref{def:cutedtens}.
The crucial issue is to respect the color structure of the graph after the cut such that each of the resulting stranded half-edges possesses the same color structure of the former edge. Rank~$D$ half-edged colored tensor graph
are rank $D$ half-edged tensor graph equipped with edge and stranded half-edge coloring.

Let us come back on the notion of c-subgraph using the operation of cutting of an edge. We obtain a c-subgraph $A(\mathcal{V}_A,\mathcal{E}_A,\mathfrak{f}^0_A)$ of a rank $D$ stranded graph $\mathcal{G}(\mathcal{V},\mathcal{E},\mathfrak{f}^0)$ by cutting a subset of edges of $\mathcal{G}_{\mathfrak{f}^0}$. A spanning c-subgraph $A$ of $\mathcal{G}_{\mathfrak{f}^0}$ is def\/ined as a c-subgraph $A(\mathcal{V}_A,\mathcal{E}_A,\mathfrak{f}^0_A)$ of $\mathcal{G}_{\mathfrak{f}^0}$ with all vertices and all additional half-edges of $\mathcal{G}_{\mathfrak{f}^0}$. Hence $\mathcal{E}_A\subseteq \mathcal{E}$ and $\mathcal{V}_A = \mathcal{V}$, $\mathfrak{f}^0_A = \mathfrak{f}^{0} \cup \mathfrak{f}^{0;1}_A(\mathcal{E}_A)$, where $\mathfrak{f}^{0;1}_A (\mathcal{E}_A)$ is the set of half-edges obtained by cutting all edges in~$\mathcal{E}_A'$ (the set of edges incident to the vertices of~$A$
and not contained in $\mathcal{E}_A$) and incident to vertices of~$A$. We denote it $A \Subset \mathcal{G}_{\mathfrak{f}^0}$.

This operation, which involves cutting an edge, has many implications on the structure of the graph. For example it modif\/ies the strand structure of this graph. In particular, the cut operation or the presence of half-edges immediately introduce another type of faces which pass through the external points of the half-edges. Combinatorially, a discrepancy is introduced between this type of faces called open faces and those
which do not pass through the external points of the half-edges are closed. The sets of closed and open faces is denoted by~$\mathcal{F}_{{\rm int}}$
and~$\mathcal{F}_{{\rm ext}}$, respectively. Hence, for a rank $D$ half-edged colored tensor graph, the set $\mathcal{F}$ of faces is the disjoint union~$\mathcal{F}_{{\rm int}} \cup \mathcal{F}_{{\rm ext}}$. The notion of closed or open rank~$D$ half-edged colored tensor graph can be reported accordingly if $\mathcal{F}_{{\rm ext}}=\varnothing$ or not, respectively. A bubble is open or external if it contains open faces otherwise it is closed or internal. The sets of closed and open bubbles for rank 3 tensor graph is denoted by $\mathcal{B}_{{\rm int}}$ and $\mathcal{B}_{{\rm ext}}$, respectively (see an illustration in Fig.~\ref{fig:tenfla}).

\begin{figure}[h]
 \centering

\includegraphics[angle=0, width=12cm, height=2.5cm]{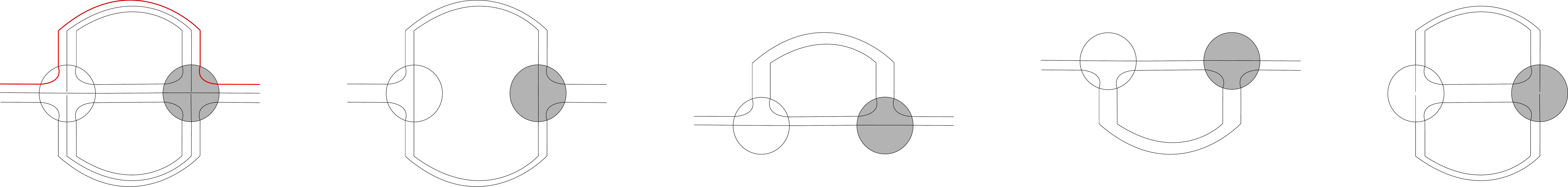}
\put(-350,33){1}
\put(-316,-10){2}
\put(-316,45){3}
\put(-316,75){0}

\caption{An open rank 3 half-edged colored tensor graph; an open face highlighted in red; the bubbles
${\mathbf{b}}_{012}$, ${\mathbf{b}}_{013}$ and ${\mathbf{b}}_{123}$ are open bubbles and ${\mathbf{b}}_{023}$ is closed.}
\label{fig:tenfla}
\end{figure}

The presence of half-edges produces a new graph called ``boundary graph'' which is obtained by setting a vertex to each half-edge \cite{Gurau:2009tz}.

\begin{Definition}[boundary tensor graph \cite{Gurau:2009tz}]
\label{def:bgph}
The boundary graph $\partial{\mathcal{G}}({\mathcal{V}}_{\partial},{\mathcal{E}}_{\partial})$ of a rank~$D$
half-edged colored tensor graph $\mathcal{G}(\mathcal{V},\mathcal{E},\mathfrak{f}^0)$ is a graph obtained by inserting a vertex
with degree~$D$ at each additional stranded half-edge of $\mathcal{G}_{\mathfrak{f}^0}$ and taking the external
faces of $\mathcal{G}_{\mathfrak{f}^0}$ as its edges. Thus, $|{\mathcal{V}}_{\partial}| = |\mathfrak{f}^0|$ and
${\mathcal{E}}_{\partial} = \mathcal{F}_{{\rm ext}}$.

The boundary of a closed rank $D$ half-edged colored tensor graph is empty.
\end{Definition}

\begin{figure}[h] \centering

\includegraphics[angle=0, width=3cm, height=2cm]{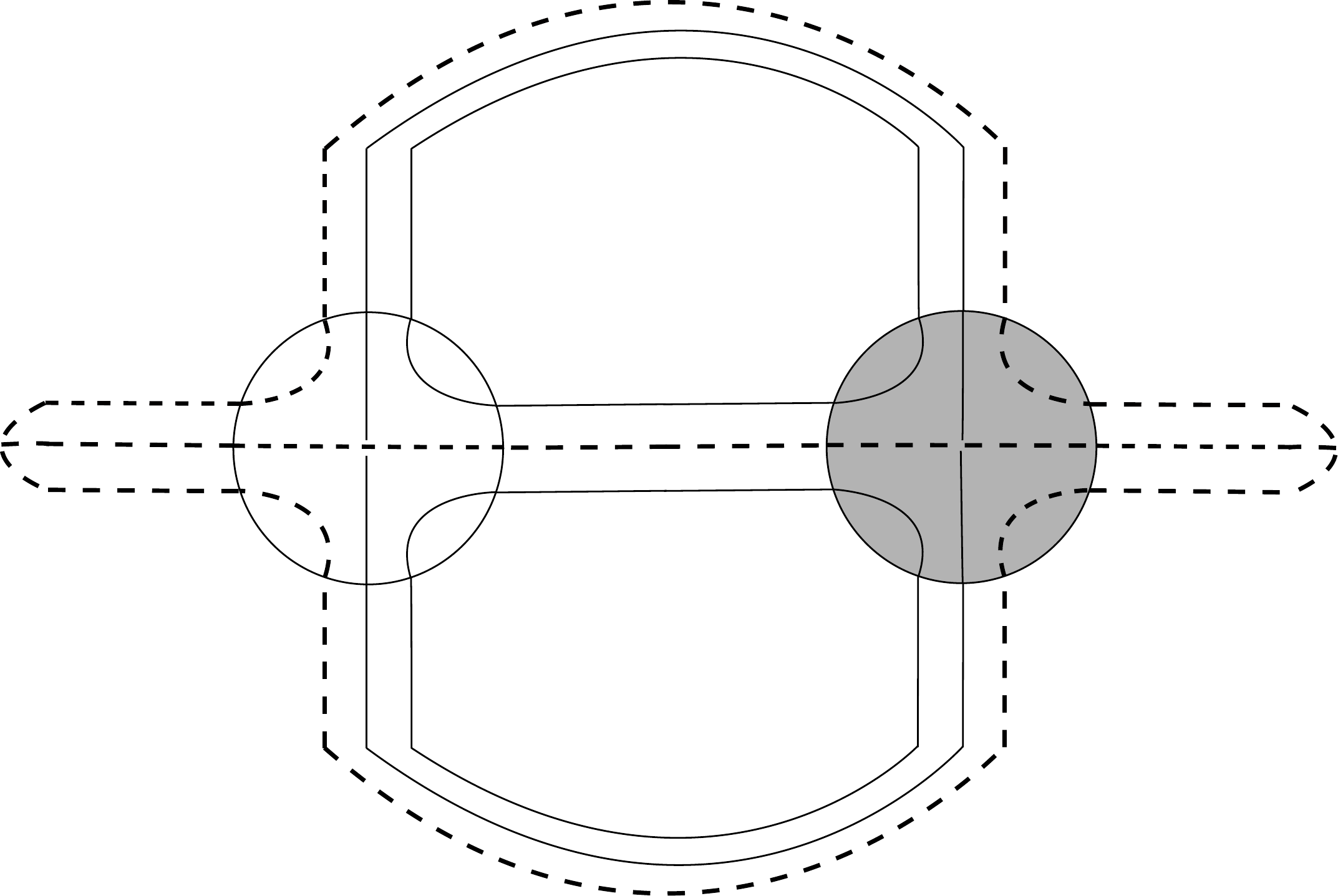}
\put(-80,33){1}
\put(-10,33){1}
\put(-45,-10){2}
\put(-45,35){3}
\put(-45,60){0}

\caption{Boundary graph (in dashed lines) of the graph of Fig.~\ref{fig:tenfla}.}
\label{fig:bound3}
\end{figure}

In order to introduce the notion of contraction of a stranded edge, some particular edges called $p$-inner edge (an illustration is given in Fig.~\ref{fig:mpinner4}) must be discussed. Assuming that $e$ is of rank~$D$, consider the two pre-edges $f_1$ and $f_2$ where $e$ is branched to its end vertex~$v$ (a~loop situation) or vertices~$v_{1,2}$ (a~non loop case). It may happen that, after branching $e$, $p$ closed faces are formed such that these closed faces are completely contained in~$e$ and~$v$ or~$v_{1,2}$. These closed faces are called inner faces of the edge. An edge with~$p$ inner faces is called $p$-inner edge.

The notion of contraction introduced in this work is the notion of ``soft'' contraction in~\cite{remia}.

\begin{figure}[h]
 \centering

\includegraphics[angle=0, width=10cm, height=2.2cm]{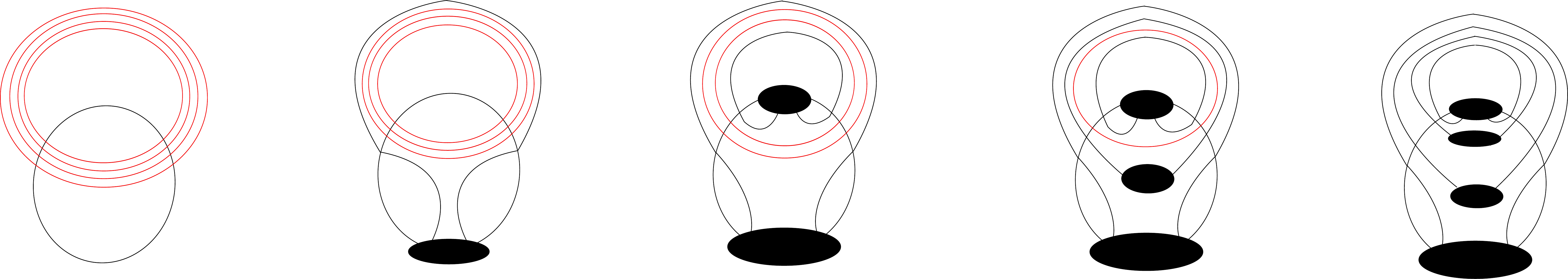}
\put(-271,-19){O}
\put(-207,-19){A}
\put(-146,-19){B}
\put(-80,-19){C}
\put(-20,-19){D}
\put(-146,45){$v_1$}
\put(-146,14){$v_2$}
\put(-80,45){$v_1$}
\put(-80,14){$v_2$}
\put(-80,-7){$v_3$}
\put(-20,43){$v_1$}
\put(-20,23){$v_2$}
\put(-20,11){$v_3$}
\put(-20,-7){$v_4$}

\caption{$p$-inner loops: a 4-inner~(O)
and a 3-inner~(A) loop with their unique possible conf\/iguration; a~2-inner loop with possible two sectors~$v_1$ and~$v_2$~(B) and a~1-inner loop with three possible sectors~$v_1$,~$v_2$ and~$v_3$~(C) and a 0-inner loop with four possible sectors $v_1,v_2,v_3$ and $v_4$~(D).}
\label{fig:mpinner4}

\end{figure}

\begin{Definition}[stranded edge contractions]\label{def:constran}
Let $\mathcal{G}(\mathcal{V},\mathcal{E},\mathfrak{f}^0)$ be a rank $D$ half-edges stranded graph. Let $e$ be a rank~$D$ edge with
pre-edges $f_1$ and $f_2$ and consider their neighbor families~$\{f_{1;i}\}$ and~$\{f_{2;j}\}$.

If $e$ is a rank $D$ $p$-inner edge but not a loop, the graph $\mathcal{G}_{\mathfrak{f}^0}/e$ obtained by contracting $e$ is def\/ined from $\mathcal{G}_{\mathfrak{f}^0}$ by removing the $p$ inner faces generated by $e$,
 replacing $e$ and its end vertices $v_{1,2}$ by $p$ disjoint disc vertices and a new vertex $v'$.
The new vertex $v'$ possesses all pre-edges except for those of $e$ and all stranded half-edges as they appear on $v_{1}$ and $v_{2}$, and chords obtained by connecting directly $\{f_{1;i}\}$ and/or $\{f_{2;j}\}$ via the outer strands of $f_1$ and $f_2$.

If $e$ is a rank $D$ $p$-inner loop, the graph $\mathcal{G}_{\mathfrak{f}^0}/e$ obtained by contracting $e$ is def\/ined from~$\mathcal{G}_{\mathfrak{f}^0}$ by removing the $p$ inner faces of $e$, by replacing $e$ and its end vertex $v$ by $p$ disjoint disc vertices and one vertex $v'$ having all pre-edges of $v$ except for those of $e$, all stranded half-edges and chords built in the similar way as previously done by connecting the neighbor families of $f_1$
and/or $f_2$. If there is no outer strand left after removing the $p$ inner faces of $e$ then the vertex~$v'$ is empty. (Examples of rank 4 loop contraction is given in Fig.~\ref{fig:wpcont4}.)
\end{Definition}

\begin{figure}[h]
 \centering

\includegraphics[angle=0, width=9cm, height=1.5cm]{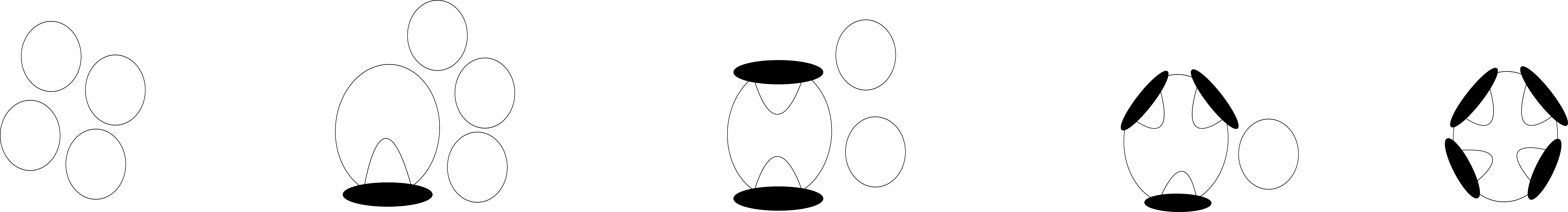}
\put(-247,-19){O}
\put(-189,-19){A}
\put(-132,-19){B}
\put(-67,-19){C}
\put(-12,-19){D}
\put(-132,33){$v_1$}
\put(-132,-7){$v_2$}
\put(-80,25){$v_1$}
\put(-56,25){$v_2$}
\put(-67,-7){$v_3$}
\put(-27,25){$v_1$}
\put(-2,25){$v_2$}
\put(-27,1){$v_3$}
\put(-2,1){$v_4$}

\caption{$p$-inner loop contractions corresponding to O, A, B, C and D of Fig.~\ref{fig:mpinner4}, respectively.}
\label{fig:wpcont4}
\end{figure}

\begin{Proposition}
Let $\mathcal{G}(\mathcal{V},\mathcal{E},\mathfrak{f}^0)$ be a rank $D$ half-edged stranded graph and $e$ be one of its edges. The graph $\mathcal{G}_{\mathfrak{f}^0}/e$ obtained by contraction of~$e$ admits a rank~$D$ half-edged stranded graph structure.
\end{Proposition}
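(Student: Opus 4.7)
The plan is to verify that the output $\mathcal{G}_{\mathfrak{f}^0}/e$ satisfies the three axioms of Definition~\ref{def:tens} (together with the vertex axioms (a)--(c) and edge axioms (a$'$)--(c$'$) of Definition~\ref{def:svert}), by checking the pieces produced by the contraction one-by-one and then re-assembling. I will split the argument along the loop vs.\ non-loop dichotomy of Definition~\ref{def:constran}, and within each case, stratify by the number $p$ of inner faces of $e$.

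First I would dispose of the easy components. The $p$ ``disc'' vertices created when the inner faces are removed are, by convention~(C1), stranded vertices of any rank $D$; they carry no pre-edges so they are incident to nothing. Thus they trivially satisfy vertex axioms (a)--(c) and incidence axiom~(iii). All edges and stranded half-edges of $\mathcal{G}_{\mathfrak{f}^0}$ distinct from $e$ are left unchanged by the construction, so axioms (a$'$)--(c$'$) and rank\,$\leq D$ continue to hold for them; only their incidence to the new vertex $v'$ must be re-examined. Because the construction preserves the pre-edges of $v_1,v_2$ (respectively $v$) except those corresponding to~$e$, and attaches the remaining stranded edges and half-edges to $v'$ at exactly those same pre-edges, incidence axiom~(iii) is inherited for free once $v'$ is shown to be a valid stranded vertex.

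The real content is therefore to check that $v'$ is a rank-$D$ stranded vertex. Its chord diagram is obtained from the chord diagram(s) of $v_{1,2}$ (or of $v$) by (i) deleting the two arcs of the frontier carrying the pre-edges $f_1,f_2$ of $e$, (ii) deleting the $p$ chords that closed off inner faces, and (iii) introducing new chords that reconnect the neighboring families $\{f_{1;i}\}$ and $\{f_{2;j}\}$ through the outer strands of $f_1,f_2$. I would verify axiom~(a) by noting that the new chords are drawn along the strand pattern of~$e$, which already was non-intersecting (axiom~(a$'$) for edges), and that the neighboring strands inside $v_{1,2}$ are unchanged; a crossing in the re-routed chords is permitted and an intersection would contradict the strand axioms of either~$e$ or the original vertices. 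Axiom~(b), namely that the surviving pre-edges lie on disjoint arcs of the new frontier, follows because the frontier of $v'$ is obtained by splicing the frontiers of $v_{1,2}$ along the arcs carrying $f_1$ and $f_2$: the other pre-edges are untouched arcs and hence remain disjoint.

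The main obstacle, and the step I would spend the most care on, is connectivity (axiom~(c)) for~$v'$, because the contraction can produce a vertex with no outer strand at all. In the non-loop case, the merged-point graph of $v'$ is obtained from the disjoint union of those of $v_1$ and $v_2$ by identifying the pre-edge points of $f_1$ with those of $f_2$ along the strand pairing of $e$. Since $v_1,v_2$ were individually connected and the identification glues at least one pair of points (whenever at least one strand survives), the glued graph is connected; the surviving disc vertices then absorb any ``degenerate'' components. In the loop case I would argue analogously: either the contraction leaves outer strands, in which case the connectivity of $v$ together with the gluing through $e$ again yields a connected chord diagram for~$v'$, or all $D$ strands closed as inner faces ($p=D$), and Definition~\ref{def:constran} explicitly declares $v'$ empty, which is allowed since the only remaining pieces are disc vertices. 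Finally the rank bookkeeping is immediate: the rank of~$v'$ is at most $\max(\operatorname{rk} v_1,\operatorname{rk} v_2)\leq D$, all surviving edges keep their rank, and the half-edge set $\mathfrak{f}^0$ is unchanged, so $\mathcal{G}_{\mathfrak{f}^0}/e$ is a rank~$D$ half-edged stranded graph.
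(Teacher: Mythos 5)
The paper states this Proposition without proof (it is imported from the rank~3 treatment in~\cite{remia}), so there is no argument in the text to compare yours against; your proposal has to stand on its own. Its overall strategy --- check the vertex axioms (a)--(c) and the edge axioms (a$'$)--(c$'$) piece by piece, with the disc vertices handled by convention (C1) and the incidence axiom (iii) inherited from the unchanged pre-edges --- is the right skeleton, and the treatment of axioms (a) and (b) is fine.

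The genuine gap is in your verification of axiom (c). You claim that whenever at least one outer strand survives, the new vertex $v'$ is connected, with the sole exception $p=D$ where $v'$ is empty. This is contradicted by the paper itself: the discussion following Figure~\ref{fig:wpcont4} states explicitly that contracting a trivial $2$-inner loop disconnects the vertex into two non-trivial vertices, a trivial $1$-inner loop into three, and a trivial $0$-inner loop into four, and Lemma~\ref{lem:tris} records $k'=k+3$, $V'=V+3$ for exactly these cases. Your gluing argument fails because the new chords of $v'$ are obtained by threading through the strand pairing of $e$ \emph{and} the chord structure inside the pre-edges $f_1$, $f_2$; when some chords of $v_{1,2}$ pair points of $f_1$ (or $f_2$) among themselves without closing an inner face, the re-routed chords can join two pre-edges of the \emph{same} end vertex, so the identification need not connect the remainder of $v_1$ to that of $v_2$, nor keep the remainder of a single $v$ in one piece. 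The statement of the Proposition is precisely meant to absorb this: what one must show is not that $v'$ satisfies (c) as a single chord diagram, but that the resulting chord diagram decomposes into connected components each of which is a valid stranded vertex of rank at most $D$ (with empty components replaced by trivial discs), and that the relabelled graph with these components as vertices still satisfies (iii). Your phrase about disc vertices ``absorbing degenerate components'' gestures at this, but the argument as written asserts connectivity where it fails, so the key step needs to be replaced by the component-by-component decomposition.
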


\begin{Definition}[equivalence class of half-edged stranded graph]
Let $D_{\mathcal{G}_{\mathfrak{f}^0}}$ be the subgraph in a rank $D$ half-edges stranded graph $\mathcal{G}_{\mathfrak{f}^0}$ def\/ined by all of its trivial disc vertices and $\mathcal{G}_{\mathfrak{f}^0} {\setminus} D_{\mathcal{G}_{\mathfrak{f}^0}}$ the rank $D$ half-edges stranded graph obtained after removing $D_{\mathcal{G}_{\mathfrak{f}^0}}$ from $\mathcal{G}_{\mathfrak{f}^0}$.

Two rank $D$ half-edged stranded graphs $\mathcal{G}_{1,\mathfrak{f}^0(\mathcal{G}_1)}$ and $\mathcal{G}_{2,\mathfrak{f}^0(\mathcal{G}_2)}$ are ``equivalent up to trivial discs'' if and only if
$ \mathcal{G}_{1,\mathfrak{f}^0(\mathcal{G}_1)}{\setminus} D_{ \mathcal{G}_{1,\mathfrak{f}^0(\mathcal{G}_1)}} = { \mathcal{G}_{2,\mathfrak{f}^0(\mathcal{G}_2)}}{\setminus} D_{ \mathcal{G}_{2,\mathfrak{f}^0(\mathcal{G}_2)}}$.
We note $ \mathcal{G}_{1,\mathfrak{f}^0(\mathcal{G}_1)} \sim { \mathcal{G}_{2,\mathfrak{f}^0(\mathcal{G}_2)}}$.
\end{Definition}

\begin{Lemma}[full contraction of a tensor graph]
\label{lem:fullcont}
Contracting an edge in a rank $D$ $($colored$)$ half-edged tensor graph does not change its boundary.
The contraction of all edges in arbitrary order of a half-edged tensor graph~$\mathcal{G}_{\mathfrak{f}^0}$ $($possibly with colors$)$ yields a half-edged stranded graph~$\mathcal{G}^0_{\mathfrak{f}^0}$ determined by the boundary $\partial( \mathcal{G}_{\mathfrak{f}^0})$ up to additional discs.
\end{Lemma}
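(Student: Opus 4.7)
The statement naturally splits into two claims, and the plan is to establish them in order. For the first claim, that a single contraction $\mathcal{G}_{\mathfrak{f}^0} \to \mathcal{G}_{\mathfrak{f}^0}/e$ preserves the boundary, I would track what happens to $\mathcal{V}_{\partial}$ and $\mathcal{E}_{\partial}=\mathcal{F}_{{\rm ext}}$ under Definition~\ref{def:constran}. The contraction removes only the $p$ closed inner faces of $e$ and reconnects the two outer-strand families of $f_1$ and $f_2$; crucially it never creates, removes or relocates a stranded half-edge, so the vertex set $\mathcal{V}_{\partial}$ is untouched. A generic open face of $\mathcal{G}_{\mathfrak{f}^0}$ that crosses $e$ enters $f_1$ along an outer strand, traverses one strand of $e$ and exits through $f_2$ (or returns to the same vertex, in the loop case); after contraction these incoming and outgoing outer strands get wired together directly, so the open face survives unchanged. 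Open faces disjoint from $e$ are trivially preserved. This forces $\partial(\mathcal{G}_{\mathfrak{f}^0}/e) = \partial(\mathcal{G}_{\mathfrak{f}^0})$.

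For the second claim I would proceed by induction on $|\mathcal{E}|$. The base case $|\mathcal{E}|=0$ is immediate: a graph with only vertices (some possibly trivial discs) and half-edges has its external-face pattern completely recoverable by placing a vertex at each half-edge and taking the strands between them as edges, which is precisely the prescription of Definition~\ref{def:bgph}. For the induction step, pick any edge $e$; the first claim gives $\partial(\mathcal{G}_{\mathfrak{f}^0}/e)=\partial(\mathcal{G}_{\mathfrak{f}^0})$, so applying the induction hypothesis to $\mathcal{G}_{\mathfrak{f}^0}/e$ (which has one fewer edge) yields a fully contracted graph $\mathcal{G}^0_{\mathfrak{f}^0}$ determined, up to trivial discs, by $\partial(\mathcal{G}_{\mathfrak{f}^0})$. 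Because the choice of first $e$ was arbitrary, this simultaneously shows that the result of full contraction is independent of the order of contractions modulo the equivalence $\sim$.

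The delicate step I expect will be the case analysis for $p$-inner loops, where up to $D$ inner faces are produced and several distinct sectors of outer strands can appear (compare Figures~\ref{fig:mpinner4} and~\ref{fig:wpcont4}). In each configuration I would verify explicitly that the outer strands reconnect so that every pre-existing open face remains intact, and I would treat separately the extremal subcase in which all strands of an end vertex get absorbed into inner faces, producing an empty vertex $v'$: the resulting isolated discs enlarge $D_{\mathcal{G}^0_{\mathfrak{f}^0}}$, which is precisely why the identification with $\partial(\mathcal{G}_{\mathfrak{f}^0})$ must be taken modulo $\sim$. In the colored setting, no new work is required since colors are carried by strands and the reconnection rule follows the strand structure, so the induced coloring on the boundary graph is preserved automatically.
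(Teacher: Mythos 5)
The paper states Lemma~\ref{lem:fullcont} without any proof: it is recalled from the rank~3 paper \cite{remia}, so there is no in-text argument to compare yours against. Your two-step strategy (track half-edges and open faces through a single contraction, then induct on the number of edges) is the natural one for this statement. The face-tracking part is sound: Definition~\ref{def:constran} keeps every stranded half-edge on the new vertex $v'$ and deletes only the $p$ inner faces, which are closed by construction, while each open face that used a strand of $e$ is rerouted along the chord of $v'$ obtained by joining the two outer strands it traversed; hence $\mathcal{V}_{\partial}$ and $\mathcal{E}_{\partial}=\mathcal{F}_{{\rm ext}}$ are both unchanged and the first claim follows.

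One step needs tightening. In your base case you argue that the boundary graph can be read off from the edgeless graph by the prescription of Definition~\ref{def:bgph}; that is the wrong direction of determination, since the lemma asserts that the fully contracted graph is \emph{determined by} $\partial(\mathcal{G}_{\mathfrak{f}^0})$ up to discs. The missing observation is that in a tensor graph every closed face traverses at least one edge, so by the time the last edge it meets is contracted it has become an inner face and has been expelled into a separate disc vertex. Consequently the surviving non-disc vertex of $\mathcal{G}^0_{\mathfrak{f}^0}$ carries no closed chords: its pre-edges are exactly the half-edges (the vertices of the boundary graph) and its chords are exactly the external faces (the edges of the boundary graph), so this vertex is the fattening of $\partial(\mathcal{G}_{\mathfrak{f}^0})$ and the determination holds in the required direction. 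With that sentence supplied, your induction, and the order-independence you deduce from the fact that the boundary is the same for every contraction order, goes through.
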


Based on this lemma we can now introduce the def\/inition of a rank $D$ w-colored graph.
An example is given in Fig.~\ref{fig:contens}.

\begin{Definition}[rank $D$ w-colored graph]\label{wdcolo}
A rank $D$ weakly-colored or w-colored graph is the equivalence class (up to trivial discs) of a rank $D$ half-edged stranded graph obtained by successive edge contractions of some rank $D$ half-edged colored tensor graph.
\end{Definition}

\begin{figure}[h]
 \centering

\includegraphics[angle=0, width=7cm, height=2cm]{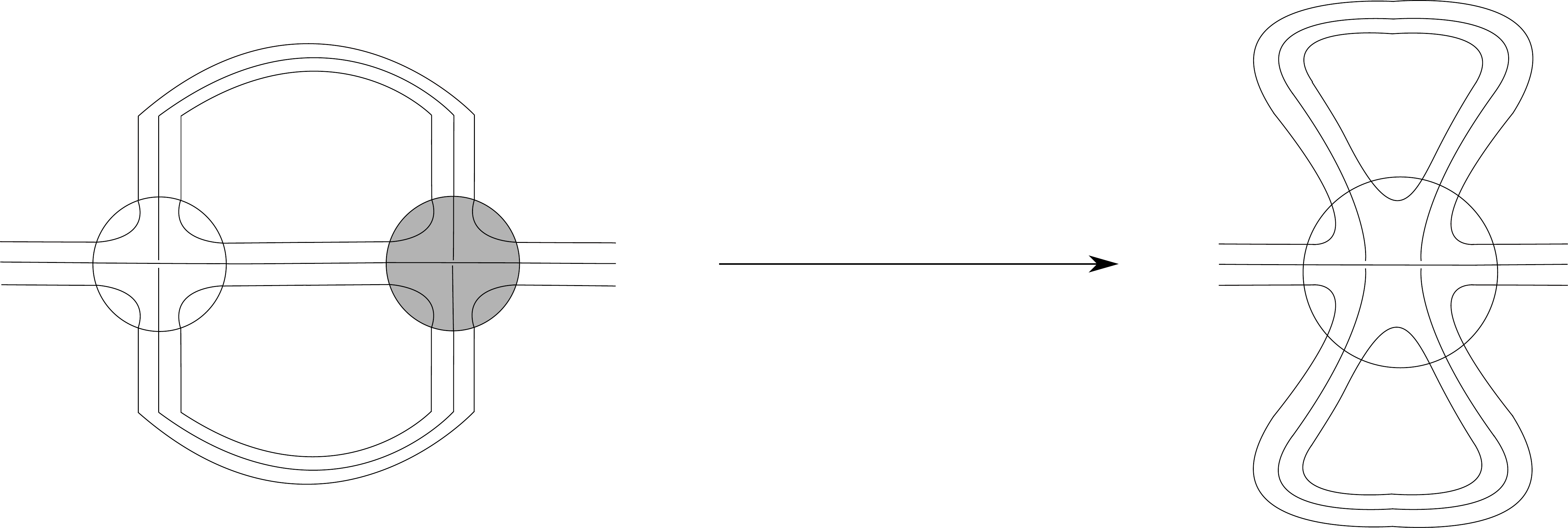}
\put(-118,25){1}
\put(-208,25){1}
\put(-163,-5){2}
\put(-163,33){3}
\put(-163,55){0}
\put(-53,25){1}
\put(2,25){1}
\put(-25,-10){2}
\put(-25,60){0}

\caption{Contraction of an edge in a rank 3 half-edged colored tensor graph.}\label{fig:contens}
\end{figure}

\begin{Proposition} \label{lem:cutmg}
Let $\mathfrak{G}$ be a rank $D$ w-colored graph,
$\mathcal{G}_{\mathfrak{f}^0}$ be a representative of~$\mathfrak{G}$ and
$e$ one of its edges.

$\mathfrak{G} \vee e$ denotes the equivalence class
of $\mathcal{G}_{\mathfrak{f}^0}\vee e$ called the cut graph~$\mathfrak{G}$ along~$e$
and is a~rank~$D$ w-colored graph.

$\mathfrak{G}/e$ denotes the equivalence class of $\mathcal{G}_{\mathfrak{f}^0}/e$,
called the contraction of $\mathfrak{G}$ along~$e$
and is a~rank~$D$ w-colored graph.
 \end{Proposition}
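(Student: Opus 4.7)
The plan is to establish, for each of the two operations, two separate claims: first, that the operation is well-defined on the equivalence class (does not depend on the choice of representative), and second, that the output equivalence class is itself a rank~$D$ w-colored graph in the sense of Definition~\ref{wdcolo}.

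For well-definedness, I would observe that any two representatives of $\mathfrak{G}$ differ only by isolated trivial disc vertices, which by convention~(C1) have no pre-edges and hence contribute neither edges nor strand connectivity to the rest of the graph. Thus the edge $e$, its endpoints, its pre-edges and the neighbor families appearing in Definitions~\ref{def:cutedtens} and~\ref{def:constran} are identical in every representative, and these definitions are purely local: cutting replaces $e$ by two half-edges at its endpoints, and contraction replaces $e$ and its end vertices by one new vertex~$v'$ together with a controlled number of trivial discs (one per inner face of~$e$). The outputs obtained from two equivalent inputs therefore again differ only by trivial discs, so they represent the same equivalence class; this is exactly what is required for both $\mathfrak{G}\vee e$ and $\mathfrak{G}/e$ to be unambiguously defined.

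For the membership claim, I would use the representative $\mathcal{G}_{\mathfrak{f}^0}$ together with a half-edged colored tensor graph~$T$ and a contraction sequence $(e_1,\dots,e_k)$ producing $\mathcal{G}_{\mathfrak{f}^0}$; such data exists by Definition~\ref{wdcolo}. Since contraction only removes edges, the edge $e$ of $\mathcal{G}_{\mathfrak{f}^0}$ is inherited from a unique uncontracted edge $\tilde e$ of~$T$. For the contraction case, I would simply extend the sequence to $(e_1,\dots,e_k,\tilde e)$; the earlier proposition (contraction preserves the half-edged stranded graph structure) shows that this is a legitimate sequence of edge contractions of~$T$, and its output is exactly $\mathcal{G}_{\mathfrak{f}^0}/e$, so $\mathfrak{G}/e$ is a rank $D$ w-colored graph.

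The cut case is the main point of the proof. Here I would first cut $\tilde e$ in $T$: by the coloring rule recalled after Definition~\ref{def:cutedtens}, the two half-edges produced inherit the color of $\tilde e$, so $T\vee \tilde e$ is again a half-edged colored tensor graph. Then I would contract $(e_1,\dots,e_k)$ in $T\vee\tilde e$ and claim the result is $\mathcal{G}_{\mathfrak{f}^0}\vee e$. The step I expect to require the most care is verifying that cut and contraction of distinct edges commute: since the reconnection prescribed by Definition~\ref{def:constran} at step~$i$ consults only the two pre-edges of~$e_i$ and their neighbor families at the end vertices of~$e_i$, it is insensitive to whether~$\tilde e$ is still present as an edge or already replaced by two half-edges at unrelated incidence points. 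The subcase in which $\tilde e$ and some $e_i$ share an end vertex needs an extra sentence: cutting~$\tilde e$ only modifies the pre-edge of the shared vertex used by~$\tilde e$ itself, which is disjoint from the pre-edges of~$e_i$, so the same local argument applies. Iterating this commutation over the sequence exhibits $\mathcal{G}_{\mathfrak{f}^0}\vee e$ as the contraction of a half-edged colored tensor graph, establishing that $\mathfrak{G}\vee e$ is a rank $D$ w-colored graph.
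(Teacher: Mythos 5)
The paper states this Proposition without proof, so there is nothing to compare against line by line; judged on its own, your argument is correct and is the natural one (it mirrors the rank-3 treatment in the predecessor paper). You correctly split the claim into well-definedness up to trivial discs and membership in the class of Definition~\ref{wdcolo}, and you correctly identify the only nontrivial point: that cutting $\tilde e$ in the parent colored tensor graph $T$ commutes with the contraction sequence $(e_1,\dots,e_k)$, so that $\mathcal{G}_{\mathfrak{f}^0}\vee e$ is exhibited as a contraction of the half-edged colored tensor graph $T\vee\tilde e$ (which the paper's remark after Definition~\ref{def:cutedtens} guarantees is again colored). Your locality argument for the commutation is sound; one could add explicitly that the inner faces of $e_i$ are unaffected by cutting $\tilde e$, since any face contained in $e_i$ and its end vertices only traverses chords joining the two pre-edges of $e_i$, which are disjoint from the pre-edge of $\tilde e$, so the number of trivial discs produced by contracting $e_i$ is the same before and after the cut. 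The only minor imprecision is that a given representative $\mathcal{G}_{\mathfrak{f}^0}$ is only guaranteed to agree with the output of some contraction sequence up to trivial discs, not literally; but your well-definedness paragraph absorbs this.
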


Let us discuss the notion of trivial loop for $4D$ w-colored stranded graphs.
A loop $e$ is called trivial if it is a 4-inner or 3-inner or if it is a 2-inner, 1-inner or 0-inner loop with all separate sectors such that there is no edge between sectors $v_i$ as illustrated in Fig.~\ref{fig:mpinner4}.

For a 4-inner loop, the contraction gives four trivial
discs, see Fig.~\ref{fig:wpcont4}O. For a 3-inner loop, the contraction yields to
Fig.~\ref{fig:wpcont4}A. For a trivial 2-inner loop the contraction is still straightforward and yields Fig.~\ref{fig:wpcont4}B. Contracting a trivial 2-inner loop, the vertex gets disconnected in two non trivial vertices. For a 1-inner loop contraction (see Fig.~\ref{fig:wpcont4}C), one gets one extra disc and has two possible conf\/igurations: either the vertex remains connected or it gets disconnected with three (possibly non trivial) vertices in both situations. If the 1-inner loop is trivial, it is immediate that the vertex gets disconnected in three non trivial vertices. For a 0-inner loop contraction (see Fig.~\ref{fig:wpcont4}D), we have no additional disc but four types of conf\/igurations with up to four disconnected (and possibly non trivial) vertices. The contraction of a trivial 0-inner loop yields directly four disconnected and non trivial vertices.

The notion of trivial loop for $3D$ w-colored is direct from the above discussion: a rank 3 loop $e$ is called trivial if it is a 3-inner or 2-inner or if it is a 1-inner or 0-inner with all separate sectors such that there is no edge between sectors $v_i$.

{\bf Equivalence class of stranded graphs.}
We now def\/ine the equivalence class of stranded graphs by performing a sequence of operations corresponding to the following:
\begin{enumerate}\itemsep=0pt
\item[--] any homeomorphism of chords and strands keeping f\/ixed their end points
(we therefore use a ``minimal'' graphical representation for stranded vertices and edges which is the one def\/ined by chords and strands using simple arcs between the pre-edge points);

\item[--] any change of the crossing states between chords and
also between strands (see Fig.~\ref{fig:cross});

\begin{figure}[h]
 \centering
\includegraphics[angle=0, width=4cm, height=2.5cm]{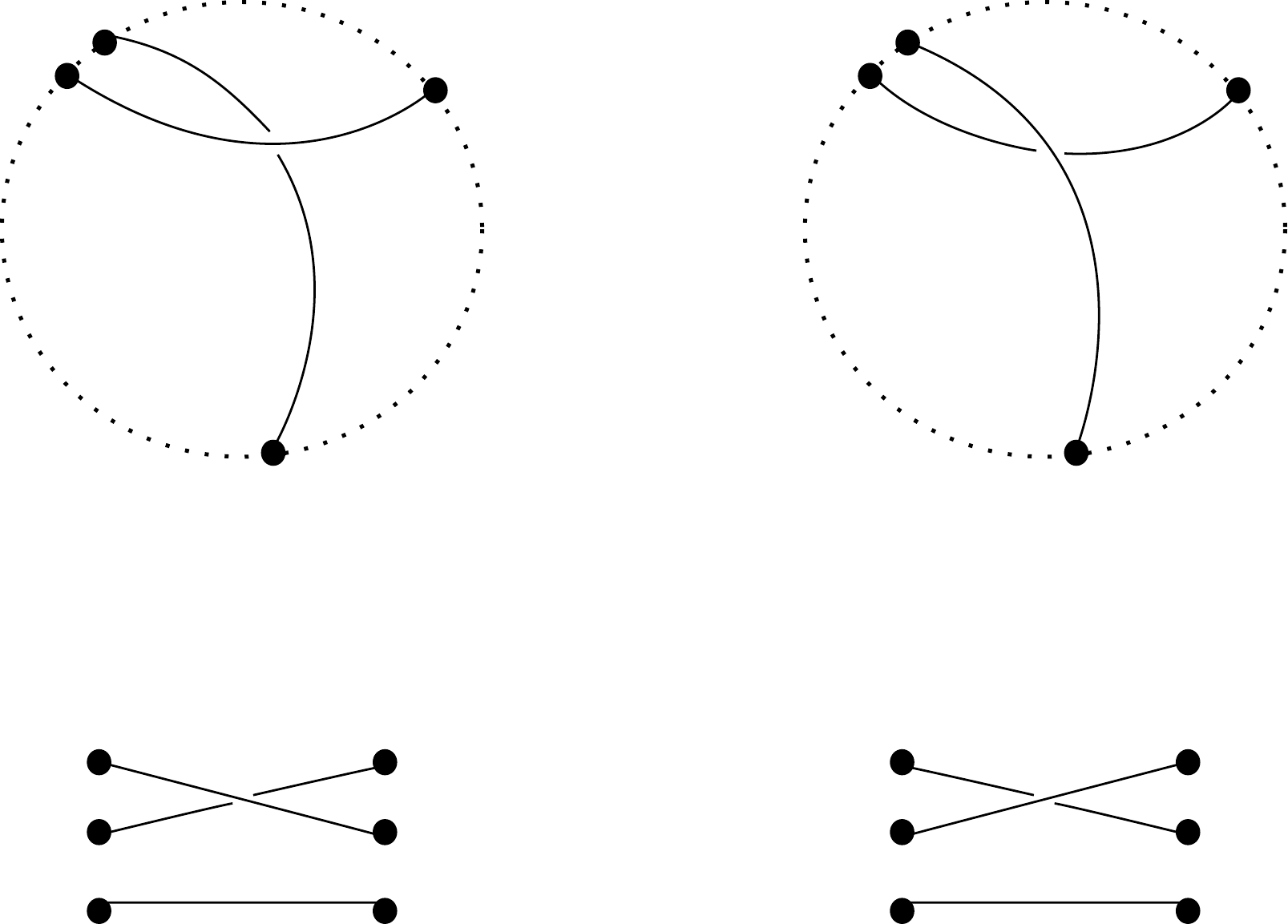}
\caption{Examples of equivalent crossing states of chords (in stranded
vertices) and strands (in stranded edge).}\label{fig:cross}
\end{figure}

\item[--] any permutation of the pre-edge points within a pre-edge
(see Fig.~\ref{fig:samevert}: $v_1$ is obtained from $v_0$ after permuting the points
1 and 2 in the pre-edge $\{1,2,3\}$ and renaming $e_1 \to \tilde{e}_1$).

\item[--] any move of the pre-edges on the frontier vertex
(see Fig.~\ref{fig:samevert}: $v_2$ is obtained from $v_0$ after moving a pre-edge $\{7,8\}$ on the vertex frontier
and also its edge $e_3$ remains incident
to that pre-edge at the same pre-edge points).
\end{enumerate}

\begin{figure}[h]
 \centering

\includegraphics[angle=0, width=8cm, height=7.5cm]{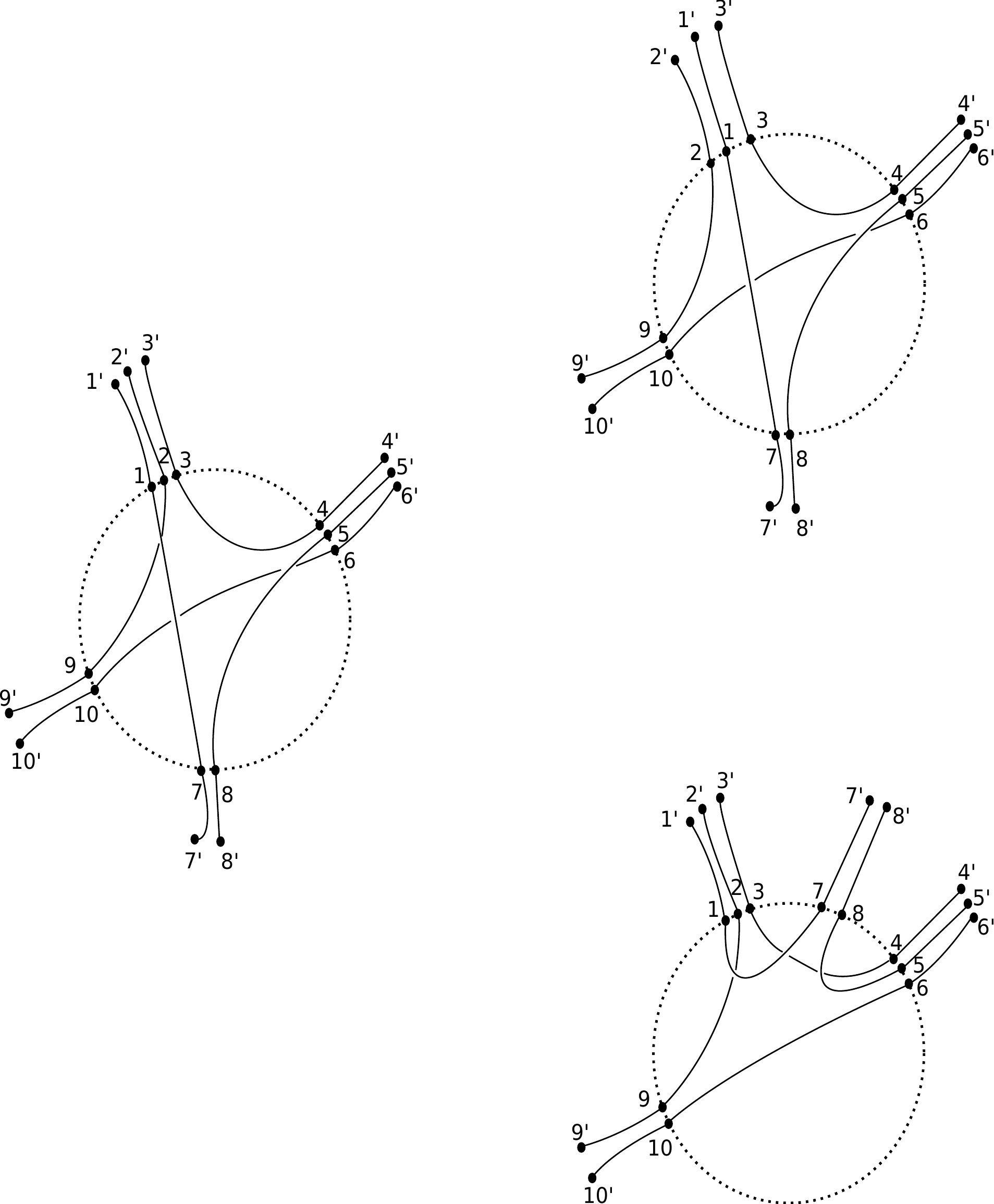}
\put(-220,42){$v_0$}
\put(-205,162){$e_1$}
\put(-130,135){$e_2$}
\put(-183,50){$e_3$}
\put(-240,75){$e_4$}
\put(-10,120){$v_1$}
\put(-85,210){$\tilde{e}_1$}
\put(2,190){$e_2$}
\put(-50,108){$e_3$}
\put(-108,135){$e_4$}
\put(-10,5){$v_2$}
\put(-85,75){$e_1$}
\put(2,60){$e_2$}
\put(-25,78){$e_3$}
\put(-108,5){$e_4$}

\caption{Three vertices $v_0$, $v_1$ and $v_2$ and their incident stranded edges
corresponding to equivalent conf\/igurations of stranded graphs, if the rest of each
graph is kept f\/ixed.}\label{fig:samevert}
\end{figure}

\looseness=-1
Consider a representative $\mathcal{G}_{\mathfrak{f}^0}$ of a rank 3 w-colored graph $\mathfrak{G}(\mathcal{V},\mathcal{E},\mathfrak{f}^0)$. We denote by $V( \mathcal{G}_{\mathfrak{f}^0})$, $E(\mathcal{G}_{\mathfrak{f}^0})$, $k( \mathcal{G}_{\mathfrak{f}^0})$, $f(\mathcal{G}_{\mathfrak{f}^0})$, $C_\partial( \mathcal{G}_{\mathfrak{f}^0})$, $E_\partial( \mathcal{G}_{\mathfrak{f}^0})$ and $F_\partial( \mathcal{G}_{\mathfrak{f}^0})$, the number of vertices, edges, connected components, half-edges, connected components of the boundary graphs, edges of the boundary graph and faces of the boundary graph of $\mathcal{G}_{\mathfrak{f}^0}$. $\rk( \mathcal{G}_{\mathfrak{f}^0})=V( \mathcal{G}_{\mathfrak{f}^0}) - k(\mathcal{G}_{\mathfrak{f}^0})$, $n( \mathcal{G}_{\mathfrak{f}^0})=E( \mathcal{G}_{\mathfrak{f}^0})-\rk( \mathcal{G}_{\mathfrak{f}^0})$ the rank and nullity of $\mathcal{G}_{\mathfrak{f}^0}$ and $B_{{\rm int}}(\mathcal{G}_{\mathfrak{f}^0})$, $B_{{\rm ext}}( \mathcal{G}_{\mathfrak{f}^0})$, the numbers of internal and external bubbles in~$\mathcal{G}_{\mathfrak{f}^0}$.

We now address the invariant on rank 3 w-colored graphs based on the sum of the genera of bubbles.

\begin{Proposition}[\cite{remia}]
\label{prop:euler3}
Let $\mathcal{G}_{\mathfrak{f}^0}$ be any representative of $\mathfrak{G}$ a rank~$3$ w-colored
graph. Then
\begin{gather*}
\gamma_3( \mathcal{G}_{\mathfrak{f}^0})= 3(V(\mathcal{G}_{\mathfrak{f}^0})-E(\mathcal{G}_{\mathfrak{f}^0})) + 2[F_{{\rm int}}( \mathcal{G}_{\mathfrak{f}^0}) - B_{{\rm int}}( \mathcal{G}_{\mathfrak{f}^0}) - B_{{\rm ext}}( \mathcal{G}_{\mathfrak{f}^0})] \leq 0.
\end{gather*}
\end{Proposition}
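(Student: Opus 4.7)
The strategy is to rewrite $\gamma_3(\mathcal{G}_{\mathfrak{f}^0})$ as a manifestly nonpositive combination of topological invariants of the $3$-bubbles. I would first treat the case where $\mathcal{G}_{\mathfrak{f}^0}$ is a rank~$3$ half-edged colored tensor graph, and then reduce the general w-colored case to it via Definition~\ref{wdcolo} and Lemma~\ref{lem:fullcont}.

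For a rank~$3$ half-edged colored tensor graph, each $3$-bubble $b$ (labelled by a choice of three out of four colors and a connected component) inherits a ribbon structure and is topologically a compact surface, closed when $b\in\mathcal{B}_{\mathrm{int}}$ and with boundary when $b\in\mathcal{B}_{\mathrm{ext}}$. Euler's formula gives $V_b - E_b + F_b^{\mathrm{int}} = 2 - 2g_b$ in the closed case and $V_b - E_b + F_b^{\mathrm{int}} = 2 - 2g_b - n_b$ in the open case, where $g_b\geq 0$ is the genus and $n_b\geq 1$ the number of boundary components (the non-orientable case is handled by the standard substitution $2g_b\to k_b$ and does not alter the sign of the argument). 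Since in a tensor graph every vertex has one pre-edge of each of the four colors, every edge has one color, and every closed face is bicolored, summing incidences yields
\begin{gather*}
\sum_{b\in\mathcal{B}^3} V_b = 4V,\qquad
\sum_{b\in\mathcal{B}^3} E_b = 3E,\qquad
\sum_{b\in\mathcal{B}^3} F_b^{\mathrm{int}} = 2F_{\mathrm{int}},
\end{gather*}
with $\mathcal{B}^3=\mathcal{B}_{\mathrm{int}}\cup\mathcal{B}_{\mathrm{ext}}$. Summing Euler's formula over the four color-triples and substituting these relations gives the identity
\begin{gather*}
3(V-E)+2\big[F_{\mathrm{int}}-B_{\mathrm{int}}-B_{\mathrm{ext}}\big]
\;=\; -V - 2\sum_{b\in\mathcal{B}^3} g_b - \sum_{b\in\mathcal{B}_{\mathrm{ext}}} n_b,
\end{gather*}
whose right-hand side is manifestly $\leq 0$ since $V,g_b,n_b\geq 0$.

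To extend the bound to an arbitrary representative of a w-colored graph, I would invoke Definition~\ref{wdcolo}: $\mathcal{G}_{\mathfrak{f}^0}$ is obtained from some half-edged colored tensor graph by a finite sequence of edge contractions. Since contractions preserve the boundary (Lemma~\ref{lem:fullcont}) and the rewriting above still makes sense for the bubble structure of the contracted object, it suffices to verify that the rearranged identity is stable under each elementary $p$-inner contraction of Definition~\ref{def:constran} and under the trivial-disc equivalence; this is a local, purely combinatorial check.

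The main obstacle is precisely this last case analysis. Each of the five contraction patterns O, A, B, C, D of Figures~\ref{fig:mpinner4}--\ref{fig:wpcont4} produces a different number of disc vertices, modifies differently the closed-face count, and shifts the genera and the boundary-component counts of the bubbles incident to the contracted edge (possibly merging or splitting bubbles). One must track, pattern by pattern, how $V$, $E$, $F_{\mathrm{int}}$, $B_{\mathrm{int}}$, $B_{\mathrm{ext}}$, $g_b$ and $n_b$ change and verify that the compensations match on both sides of the rearranged identity. Once this bookkeeping is completed, the nonpositivity of $\gamma_3$ transfers from tensor representatives to arbitrary w-colored ones.
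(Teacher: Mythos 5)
Your argument for the colored tensor case is sound: with the incidence counts $\sum_b V_b=4V$, $\sum_b E_b=3E$, $\sum_b F_{{\rm int};b}=2F_{\rm int}$ and Euler's formula for each $3$-bubble, the identity $\gamma_3=-V-2\sum_b g_b-\sum_b n_b\leq 0$ does follow. The genuine gap is the second half, which you explicitly defer: propagating the bound from a tensor representative to an arbitrary w-colored one through edge contractions. This is not the routine local check you suggest, for two reasons. First, $\gamma_3$ is not monotone under contraction: combining the trivial $p$-inner loop relation of Theorem~\ref{theo:contens3} (the factor $z^{4p-7}$) with $k(A/e)=k(A)+2$ gives $\gamma_3(A/e)=\gamma_3(A)+3+4p>\gamma_3(A)$, so knowing $\gamma_3\leq 0$ before a contraction does not by itself yield it afterwards; you would have to carry a strengthened inductive statement through every contraction. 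Second, the exact identity you would like to transport rests on $\sum_b V_b=4V$, which is special to tensor vertices of coordination~$4$ and fails for the merged chord-diagram vertices produced by contraction; the identity therefore does not survive as an identity, only as an inequality whose defect is exactly the bookkeeping you list as ``the main obstacle''.

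The paper's route (given for the rank-$4$ analogue in Proposition~\ref{prop:euler4} and specialized to $n=3$ around \eqref{eq:rrank3}) avoids the reduction entirely: the $3$-bubbles of \emph{any} representative are half-edged ribbon graphs, hence each satisfies $V_b-E_b+F_{{\rm int};b}\leq 2$ as in \eqref{eq:forran2}, and the incidence relations of Lemma~\ref{lemma:bulbesn} hold for w-colored vertices in the weakened form $\sum_b V_b\geq 3V$ (with equalities for edges and internal faces). Summing over the $3$-bubbles gives $3(V-E)+2F_{\rm int}-2B^3\leq 0$ directly, with no case analysis of contraction patterns. To salvage your approach you would have to complete the pattern-by-pattern analysis while tracking a quantity strictly stronger than $\gamma_3\leq 0$; the direct bubble-wise argument is both shorter and already applies to every representative.
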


\begin{Definition}[topological invariant for rank 3 w-colored
graph]
Let $\mathfrak{G}(\mathcal{V},\mathcal{E},\mathfrak{f}^0)$ be a rank~3 w-colored graph.
The generalized topological invariant associated with $\mathfrak{G}$
is given by the fol\-lo\-wing function associated with any of its
representatives $\mathcal{G}_{\mathfrak{f}^0}$ (using the above notations)
\begin{gather}
\mathfrak{T}_{\mathfrak{G}}(x,y,z,s,w,q,t) =\mathfrak{T}_{ \mathcal{G}_{\mathfrak{f}^0}}(x,y,z,s,w,q,t)\nonumber\\
 \qquad{} = \sum_{A \Subset\mathcal{G}_{\mathfrak{f}^0}}
 (x-1)^{\rk( \mathcal{G}_{\mathfrak{f}^0})-\rk(A)}(y-1)^{n(A)}
z^{5k(A)-\gamma_3(A)}s^{C_\partial(A)} w^{F_{\partial}(A)} q^{E_{\partial}(A)} t^{f(A)},\label{ttopfla}
\end{gather}
with $\gamma_3(A)= 3(V(A)-E(A)) + 2[F_{{\rm int}}(A) - B_{{\rm int}}(A) - B_{{\rm ext}}(A)]$.
\end{Definition}

It is important to show that considering dif\/ferent representatives $\mathcal{G}_{\mathfrak{f}^0(\mathcal{G})}$ and $\mathcal{G}'_{\mathfrak{f}^0(\mathcal{G}')}$ of $\mathfrak{G}$, the def\/inition does not depend of the choice of the representative. In fact, there exists a one-to-one map between spanning c-subgraphs of $\mathcal{G}_{\mathfrak{f}^0(\mathcal{G})}$ and those of $\mathcal{G}'_{\mathfrak{f}^0(\mathcal{G}')}$. We can then map each $A \Subset \mathcal{G}_{\mathfrak{f}^0(\mathcal{G})}$ onto $A' \Subset \mathcal{G}'_{\mathfrak{f}^0(\mathcal{G}')}$ such that $A' = (A{\setminus} D_{ \mathcal{G}_{\mathfrak{f}^0(\mathcal{G})}}) \cup D_{\mathcal{G}'_{\mathfrak{f}^0(\mathcal{G}')}}$. We now verify that $\rk(\mathcal{G}_{\mathfrak{f}^0(\mathcal{G})})$, $\rk(A)$, $n(A)$ and $5k(A)-(3V+2F_{\rm int}(A))$
are independent of the representative. The exponents $B_{\rm int}(A)$, $B_{\rm ext}(A)$, $C_\partial (A)$,
$f(A)$, $E_\partial(A)$ and $F_\partial(A)$ only depend on $A{\setminus} D_{ \mathcal{G}_{\mathfrak{f}^0(\mathcal{G})}} = A'{\setminus} D_{ \mathcal{G}'_{\mathfrak{f}^0(\mathcal{G}')}}$ therefore are not dependent on the representative.

\begin{Proposition}[polynomial invariant]
$\mathfrak{T}_{\mathfrak{G}}=\mathfrak{T}_{\mathcal{G}_{\mathfrak{f}^0}}$ is a polynomial.
\end{Proposition}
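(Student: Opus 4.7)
The plan is to verify that every summand in the definition of $\mathfrak{T}_{\mathcal{G}_{\mathfrak{f}^0}}$ is a monomial with non-negative integer exponents in $x,y,z,s,w,q,t$. Since the sum ranges over the finite set of spanning c-subgraphs of a finite graph, a finite sum of such monomials is a polynomial. Well-definedness on the equivalence class $\mathfrak{G}$ has already been established in the paragraph preceding the statement, so it remains to dispose of each exponent.

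First I would handle the routine exponents. The quantities $C_\partial(A)$, $F_\partial(A)$, $E_\partial(A)$, and $f(A)$ are, by construction, cardinalities of finite sets of combinatorial data attached to the spanning c-subgraph $A$, hence non-negative integers. The nullity $n(A)=E(A)-\rk(A)$ is non-negative by the standard definition of graph rank and nullity. For $\rk(\mathcal{G}_{\mathfrak{f}^0})-\rk(A)$, I would note that a spanning c-subgraph $A\Subset \mathcal{G}_{\mathfrak{f}^0}$ shares the vertex set of $\mathcal{G}_{\mathfrak{f}^0}$ and is obtained by cutting some edges; cutting edges can only weakly increase the number of connected components, so $k(A)\geq k(\mathcal{G}_{\mathfrak{f}^0})$ and therefore $\rk(A)=V-k(A)\leq V-k(\mathcal{G}_{\mathfrak{f}^0})=\rk(\mathcal{G}_{\mathfrak{f}^0})$.

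The main obstacle is the exponent $5k(A)-\gamma_3(A)$. The plan here is to reduce it to Proposition~\ref{prop:euler3}, which asserts $\gamma_3\leq 0$ for any rank~3 w-colored graph; combined with $k(A)\geq 0$ this gives $5k(A)-\gamma_3(A)\geq 0$ at once. The only genuine content is therefore to show that every spanning c-subgraph $A\Subset\mathcal{G}_{\mathfrak{f}^0}$ is itself (a representative of) a rank~3 w-colored graph, so that Proposition~\ref{prop:euler3} applies to it. For this I would invoke Proposition~\ref{lem:cutmg}: cutting a single edge of a rank~3 w-colored graph yields a rank~3 w-colored graph. Iterating this over the subset of edges cut to obtain $A$ from $\mathcal{G}_{\mathfrak{f}^0}$ shows that $A$ belongs to the same family, so $\gamma_3(A)\leq 0$ as required, and the integrality of $\gamma_3(A)$ is automatic from its explicit expression as an integer linear combination of $V$, $E(A)$, $F_{{\rm int}}(A)$, $B_{{\rm int}}(A)$, and $B_{{\rm ext}}(A)$.

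With every exponent confirmed to be a non-negative integer, the expression \eqref{ttopfla} is a finite $\mathbb{Z}_{\geq 0}$-linear combination of monomials in $x,y,z,s,w,q,t$, hence a polynomial, which is what we wanted to prove.
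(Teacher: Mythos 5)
Your argument is correct and is essentially the intended one: the paper states Proposition~\ref{prop:euler3} immediately before the definition precisely so that the exponent $5k(A)-\gamma_3(A)$ is a non-negative integer (via Proposition~\ref{lem:cutmg} applied to the cut edges, as you note), while the remaining exponents are cardinalities or the standard rank/nullity differences. The paper itself omits the proof, deferring to~\cite{remia}, and the only cosmetic slip in your write-up is calling the summands ``monomials'' despite the $(x-1)$ and $(y-1)$ factors --- they are of course still polynomials, so the conclusion stands.
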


 The main properties satisf\/ied by the above polynomial are given by the next statement.

\begin{Theorem}[contraction/cut rule for rank 3 w-colored graphs]
\label{theo:contens3}
Let $\mathfrak{G}$ be a rank $3$ w-colored graph. Then, for a regular edge~$e$ of any of the representative $ \mathcal{G}_{\mathfrak{f}^0}$ of $\mathfrak{G}$, we have
\begin{gather*}
\mathfrak{T}_{\mathcal{G}_{\mathfrak{f}^0}}=\mathfrak{T}_{ \mathcal{G}_{\mathfrak{f}^0}\vee e} +\mathfrak{T}_{ \mathcal{G}_{\mathfrak{f}^0}/e}.
\end{gather*}
For a bridge $e$, we have
\begin{gather*}
\mathfrak{T}_{ \mathcal{G}_{\mathfrak{f}^0} \vee e}= z^8s(wq)^3t^2 \mathfrak{T}_{ \mathcal{G}_{\mathfrak{f}^0}/e} \qquad \text{and}\qquad
\mathfrak{T}_{\mathcal{G}_{\mathfrak{f}^0}} =\big[(x-1)z^8s(wq)^3t^2+1\big]\mathfrak{T}_{ \mathcal{G}_{\mathfrak{f}^0}/e}.
\end{gather*}
For a trivial p-inner loop $e$, $p=0,1,2$, we have
\begin{gather*}
\mathfrak{T}_{\mathcal{G}_{\mathfrak{f}^0}}=\mathfrak{T}_{ \mathcal{G}_{\mathfrak{f}^0} \vee e} + (y-1)z^{4p-7} \mathfrak{T}_{ \mathcal{G}_{\mathfrak{f}^0}/e}.
\end{gather*}
\end{Theorem}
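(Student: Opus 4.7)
The plan is to use the classical Tutte-style state-sum argument, adapted to the stranded setting. Fix a representative $\mathcal{G}_{\mathfrak{f}^0}$ of $\mathfrak{G}$ and an edge $e$. Partition the spanning c-subgraphs according to whether $e \in \mathcal{E}_A$ or not. There are canonical bijections $\{A \Subset \mathcal{G}_{\mathfrak{f}^0} : e \notin \mathcal{E}_A\} \leftrightarrow \{A' \Subset \mathcal{G}_{\mathfrak{f}^0} \vee e\}$ obtained by identifying the two half-edges produced by the cut with the half-edges already present at $A$, and $\{A \Subset \mathcal{G}_{\mathfrak{f}^0} : e \in \mathcal{E}_A\} \leftrightarrow \{A'' \Subset \mathcal{G}_{\mathfrak{f}^0}/e\}$ obtained by contracting $e$ inside $A$. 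The whole argument then reduces to comparing exponents on each side of these bijections.

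For a regular edge $e$ (neither a bridge nor a loop), first I would check that the cut preserves each statistic appearing in \eqref{ttopfla}: cutting $e$ replaces an edge by two half-edges without changing $V$, $k$, $F_{{\rm int}}$, internal/external bubbles, or the boundary graph (because the two new half-edges and the old edge's strands connect the same external faces in the same pattern). Hence $\rk(\mathcal{G}_{\mathfrak{f}^0}) = \rk(\mathcal{G}_{\mathfrak{f}^0} \vee e)$ and all exponents match term by term. Second, for contractions, the regularity of $e$ forces $p=0$ inner faces and a non-loop configuration, so $V - E$, $k$, $F_{{\rm int}}$ are preserved while bubbles are reorganized but counted identically in the equivalence class; using Proposition~\ref{lem:cutmg} I would verify $\gamma_3(A) = \gamma_3(A'')$ and $5k(A) - \gamma_3(A) = 5k(A'') - \gamma_3(A'')$, giving the additive recurrence.

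For a bridge $e$, the c-subgraphs containing $e$ form a $\mathfrak{T}_{\mathcal{G}_{\mathfrak{f}^0}/e}$-sum without weight change, while the c-subgraphs that cut $e$ pick up a multiplicative factor from disconnecting: cutting the bridge raises $k$ by $1$, $C_\partial$ by $1$, $f$ by $2$, and adds three edges and three faces to the boundary graph (one per strand of $e$), producing the prefactor $z^8 s(wq)^3 t^2$. This yields both identities announced in the bridge case. For a trivial $p$-inner loop, the cut term contributes $\mathfrak{T}_{\mathcal{G}_{\mathfrak{f}^0}\vee e}$ directly; in the contracted term, nullity drops by one ($n(A)$ gains the factor $y-1$), and I would use Definition~\ref{def:constran} together with Fig.~\ref{fig:wpcont4} to check that the triviality of the loop forces the vertex to split into a predictable number of components with $p$ new disc vertices and the removal of $p$ inner faces, making $5k - \gamma_3$ shift by exactly $4p-7$ regardless of $p \in \{0,1,2\}$.

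The main obstacle is the bookkeeping for the loop case: $\gamma_3$ mixes $V$, $E$, $F_{{\rm int}}$, and both internal and external bubble counts, and each of the sectors in Fig.~\ref{fig:mpinner4} (B), (C), (D) reshuffles the bubbles differently after contraction. The triviality assumption (no edges between sectors) is what makes the shift in $5k-\gamma_3$ independent of which sector configuration occurs, and verifying this uniformity is the delicate combinatorial point; once it is established, assembling the three identities is a direct substitution in \eqref{ttopfla}.
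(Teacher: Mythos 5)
Your overall strategy --- partition the spanning c-subgraphs of $\mathcal{G}_{\mathfrak{f}^0}$ by whether they contain $e$, use the canonical bijections with the spanning c-subgraphs of $\mathcal{G}_{\mathfrak{f}^0}\vee e$ and of $\mathcal{G}_{\mathfrak{f}^0}/e$, and track how each exponent in \eqref{ttopfla} varies --- is exactly the route the paper takes (it is spelled out for the rank-4 analogue, Theorem~\ref{theo:contens4}, via Lemmas~\ref{lem:tris} and~\ref{lem:cutbri}). The regular-edge and loop cases are organized correctly, and in the loop case you rightly identify the one delicate fact to be verified: that for a \emph{trivial} $p$-inner loop the total $3$-bubble count changes by the same amount ($3-2p$ in rank $3$) whatever the sector configuration, which together with $V\mapsto V+2$, $k\mapsto k+2$, $E\mapsto E-1$, $F_{\rm int}$ fixed yields the shift $4p-7$ in $5k-\gamma_3$ and the factor $y-1$ from $n(A)=n(A')+1$.

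The bridge case, however, as you have written it does not produce the stated prefactor. The exponent of $z$ is $5k(A)-\gamma_3(A)=5k-3(V-E)-2F_{\rm int}+2B_{\rm int}+2B_{\rm ext}$, and the changes you list ($k\mapsto k+1$, $C_\partial\mapsto C_\partial+1$, $f\mapsto f+2$, three new boundary edges and faces) account only for $z^{5}s(wq)^3t^2$. Two items are missing: $V(\mathcal{G}_{\mathfrak{f}^0}\vee e)=V(\mathcal{G}_{\mathfrak{f}^0}/e)+1$, contributing $z^{-3}$, and, crucially, $B_{\rm ext}(\mathcal{G}_{\mathfrak{f}^0}\vee e)=B_{\rm ext}(\mathcal{G}_{\mathfrak{f}^0}/e)+3$, because each of the three open $3$-bubbles containing the bridge splits into two components when the bridge is cut (the rank-3 analogue of \eqref{eq:bubext}); this contributes $z^{+6}$ and is what brings the total to $z^{8}$. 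This bubble-splitting is precisely the stranded-graph ingredient that distinguishes the recursion from the ordinary Bollob\'as--Riordan one, so it cannot be left implicit: without it your listed bookkeeping yields $z^{5}$ (or $z^{2}$ once the $V$ shift is included), not $z^{8}$. Once the external-bubble count is added to the ledger, the rest of your plan goes through as in the paper.
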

The proof of this statement uses bijections between c-subgraphs and the variations of the number of $p$-bubbles under contraction/cut rules.

There are several possible reductions of the above polynomial satisfying nice properties. Some of them are determined by a change of variables. We have
\begin{gather*}
\mathfrak{T}_{\mathcal{G}_{\mathfrak{f}^0}} \big(x,y,z,z^{-2},w,q,t\big) = \mathfrak{T}'_{\mathcal{G}_{\mathfrak{f}^0}} (x,y,z,w,q,t), \\
\mathfrak{T}_{\mathcal{G}_{\mathfrak{f}^0}} \big(x,y,z,z^{-2}s^{2},s^{-1},s,s^{-1}\big) = \mathfrak{T}''_{\mathcal{G}_{\mathfrak{f}^0}} (x,y,z,s),\\ 
\mathfrak{T}_{ \mathcal{G}_{\mathfrak{f}^0}} \big(x,y,z,z^{2}z^{-2},z^{-1},z,z^{-1}\big) = \mathfrak{T}'''_{\mathcal{G}_{\mathfrak{f}^0}} (x,y,z).
\end{gather*}
$\mathfrak{T}'_{ \mathcal{G}_{\mathfrak{f}^0}}$, $\mathfrak{T}''_{\mathcal{G}_{\mathfrak{f}^0}}$ and $\mathfrak{T}'''_{\mathcal{G}_{\mathfrak{f}^0}}$ satisfy also the contraction/cut rule. Furthermore $\mathfrak{T}_{\mathcal{G}_{\mathfrak{f}^0}}$ maps to Tutte polynomial by putting variables $z=w=q=t=1$. The mapping of $\mathfrak{T}_{\mathcal{G}_{\mathfrak{f}^0}}$ to BR polynomial is also possible but we must pay attention to the exponent of the variable $z$ which is modif\/ied by the number of additional discs. We will come back on this point in the following.

\begin{Definition}[multivariate form]
The multivariate form associated with \eqref{ttopfla} is def\/ined by
 \begin{gather*}
\widetilde{\mathfrak{T}}_{\mathfrak{G}}(x,\{\beta_e\},\{z_i\}_{i=1,2,3},s,w,q,t)=
\widetilde{\mathfrak{T}}_{\mathcal{G}_{\mathfrak{f}^0}}(x,\{\beta_e\},\{z_i\}_{i=1,2,3},s,w,q,t)\\ 
\qquad{} = \sum_{A \Subset \mathcal{G}_{\mathfrak{f}^0}}
 x^{\rk(A)}\left(\prod_{e\in A}\beta_e\right)
z_1^{F_{{\rm int}}(A)}z_2^{B_{{\rm int}}(A)} z_3^{B_{{\rm ext}}(A)}
s^{C_\partial(A)} w^{F_{\partial}(A)} q^{E_{\partial}(A)} t^{f(A)},
\end{gather*}
for $\{\beta_e\}_{e\in \mathcal{E}}$ labeling the edges of the graph $ \mathcal{G}_{\mathfrak{f}^0}$.
\end{Definition}
For all non-loop edge $e$, $\mathfrak{T}$ obeys the rule
$
 \widetilde{\mathfrak{T}}_{\mathfrak{G}}=
\widetilde{\mathfrak{T}}_{ \mathfrak{G}\vee e} +x\beta_e \widetilde{\mathfrak{T}}_{\mathfrak{G}/e}$,
which can be shown using standard techniques.

\section[The polynomial invariant for $4D$ w-colored graphs]{The polynomial invariant for $\boldsymbol{4D}$ w-colored graphs}
\label{subset:polymt}
Rank 4 w-colored graph structures are richer than the above case. This is completely expected since rank 4 w-colored graphs represent topologically $4D$ simplicial manifolds. For the present study, we observe that increasing the rank of the graph implies that the number of $p$-bubbles increases. As a consequence, we have a lot more freedom to def\/ine an invariant. In this work, we have not fully harnessed the number of possibilities of def\/ining invariants. This will be addressed in a forthcoming work.

Let us focus on the importance of the bubble combinatorics in the resolution of the contraction/cut invariant problem.
For the determination of the invariant in Proposition~\ref{prop:euler3}, each rank 3 w-colored graph is looked as collection of ribbon graphs. The invariant comes from the summation on the genera of all these graphs. The case of the rank 4 w-colored stranded graphs leads to several options. A~rank~4 w-colored stranded graph may be looked as a collection of $3$-bubbles which are ribbon graphs, a collection of $4$-bubbles which are themselves rank~3 graphs or a~mixture of both (see Fig.~\ref{fig:npb}). A sum over invariants of all 4-bubbles is introduced in Appendix~\ref{appb} for an interested reader. In general, we may also perform a summation of the invariants of all $3$-bubbles, the invariant of all $4$-bubbles or may mix both to f\/ind a new invariant.
\begin{figure}[h]
 \centering

\includegraphics[angle=0, width=8cm, height=1cm]{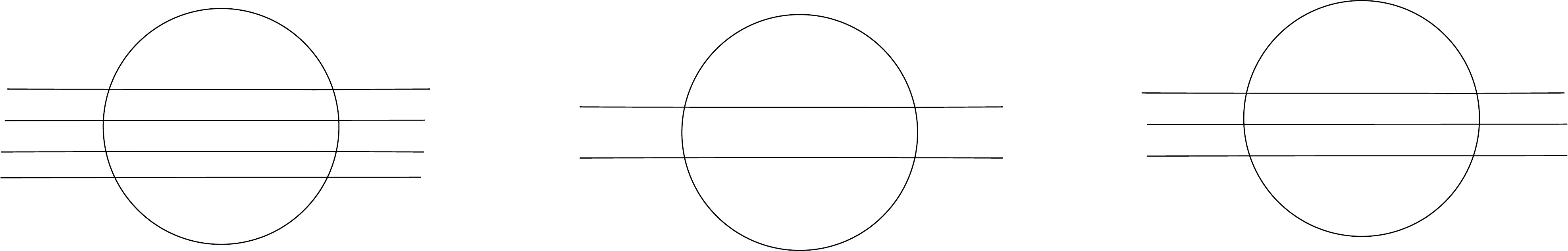}
\put(-205,-10){$ \mathcal{G}_{1,\mathfrak{f}^0(\mathcal{G}_1)}$}
\put(-115,-10){${\mathbf{b}}_1$}
\put(-163,10){$0$}
\put(-234,10){$\bar 0$}
\put(-80,10){$0$}
\put(-150,10){$\bar 0$}
\put(-30,-10){${\mathbf{b}}_2$}
\put(3,10){$0$}
\put(-69,10){$\bar 0$}

\caption{A 3-bubble ${\mathbf{b}}_1$ and a 4-bubble ${\mathbf{b}}_2$ of $ \mathcal{G}_{1,\mathfrak{f}^0(\mathcal{G}_1)}$.}\label{fig:npb}
\end{figure}

In the present work, we mainly discuss the summation of the invariants of all $3$-bubbles. This is to make contact with very recent results \cite{Gurau:2009tz, Ryan, Tanasa:2010me}.

Some notations deserve to be clarif\/ied. Consider a representative $ \mathcal{G}_{\mathfrak{f}^0}$ of any rank 4 w-colored graph: a $d$-bubble (closed or open) in $\mathcal{G}_{\mathfrak{f}^0}$ is denoted by ${\mathbf{b}}^d$, the set of $d$-bubbles is $\mathcal{B}^d$, and its cardinal $B^d$. $\mathcal{V}_{{\mathbf{b}}^d}$, $\mathcal{E}_{{\mathbf{b}}^d}$, $\mathcal{F}_{{\rm int}; {\mathbf{b}}^d}$ and $\mathcal{B}^p_{{\mathbf{b}}^d}$ are respectively the set of vertices, edges, internal faces and $p$-bubbles ($p \leq d$) of ${\mathbf{b}}^d$ of cardinal $V_{{\mathbf{b}}^d}$, $E_{{\mathbf{b}}^d}$, $F_{{\rm int}; {\mathbf{b}}^d}$ and $B^p_{{\mathbf{b}}^d}$ respectively.

\begin{Lemma}[rank 4 trivial loop contraction]\label{lem:tris}
Let $\mathfrak{G}$ a w-colored graph and $\mathcal{G}_{\mathfrak{f}^0}$ any of its representative
with boundary $\partial\mathcal{G}_{\mathfrak{f}^0}$,
 $e$ be a trivial loop of $ \mathcal{G}_{\mathfrak{f}^0}$, and $ \mathcal{G}'_{\mathfrak{f}^0}= \mathcal{G}_{\mathfrak{f}^0}/e$
the result of the contraction of $\mathcal{G}_{\mathfrak{f}^0}$ by $e$ with boundary
denoted by $\partial \mathcal{G}'_{\mathfrak{f}^0}$.
 Let $k$ denote the number of connected components
of $\mathcal{G}_{\mathfrak{f}^0}$, $V$ its number of vertices, $E$ its number of edges, $F_{{\rm int}}$ its number of faces, $B^i_{\rm int}$ its number of closed $i$-bubbles and $B^i_{\rm ext}$ its number of open $i$-bubbles $(i=3,4)$, $C_\partial$ the number of connected component of $\partial\mathcal{G}_{\mathfrak{f}^0}$, $f=V_\partial$ its number of stranded half-edges, $E_\partial=F_{{\rm ext}}$ the number of edges and $F_\partial$
the number of faces of $\partial \mathcal{G}_{\mathfrak{f}^0}$, and let $k'$, $V'$, $E'$, $F'_{{\rm int}}$, $C'_\partial$, $B'^i_{{\rm int}}$, $B'^i_{\rm ext} (i=3,4)$, $C'_\partial$, $f'$, $E'_\partial$, and $F'_\partial$ denote the similar numbers for $ \mathcal{G}'_{\mathfrak{f}^0}$ and its boundary $\partial \mathcal{G}'_{\mathfrak{f}^0}$.

If $e$ is a $4$-inner loop, then
\begin{gather}
k' = k + 3, \qquad
V' = V+3, \qquad
E' = E-1, \qquad
F'_{{\rm int}} = F_{{\rm int}}, \nonumber\\
C'_\partial= C_\partial, \qquad
f' = f, \qquad
E'_\partial = E_\partial,
\qquad F'_{\partial } = F_{\partial }, \nonumber\\
 B'^i_{\rm int} = B^i_{{\rm int}} -\complement_{4}^{i-1}, \qquad
B'^i_{\rm ext} = B^i_{\rm ext}, \qquad i=3,4.
\label{eq:cself2}
\end{gather}

If $e$ is a $3$-inner loop, then
\begin{gather}
k' = k + 3, \qquad
V' = V+3, \qquad
E' = E-1, \qquad
F'_{{\rm int}} = F_{{\rm int}}, \nonumber\\
C'_\partial= C_\partial, \qquad
f' = f, \qquad
E'_\partial = E_\partial,
\qquad F'_{\partial } = F_{\partial }, \nonumber\\
 B'^i_{\rm int} = B^i_{{\rm int}} -\complement_{3}^{i-1}, \qquad
B'^i_{\rm ext} = B^i_{\rm ext}, \qquad i=3,4.
\label{eq:cself3}
\end{gather}

If $e$ is a trivial $p$-inner loop such that $p=0,1,2$,
then
\begin{gather}
k' = k + 3, \qquad
V' = V+3, \qquad
E' = E-1, \qquad
F'_{{\rm int}} = F_{{\rm int}}, \nonumber\\
C'_\partial= C_\partial, \qquad
f' = f, \qquad
E'_\partial = E_\partial, \qquad
 F'_\partial = F_\partial, \nonumber\\
B'^3_{\rm int} + B'^3_{\rm ext} = B^3_{{\rm int}} + B^3_{\rm ext} + \beta_p, \qquad B'^4_{\rm int} + B'^4_{\rm ext} = B^4_{{\rm int}} + B^4_{\rm ext} + \beta'_p,
\label{eq:cselfp}
\end{gather}
where $\beta_{p} =6-3p$, $\beta'_{p} =8-3p$ and $\complement_{n}^{p}=\frac{n!}{p!(n-p)!}$ the ordinary binomial coefficient.
\end{Lemma}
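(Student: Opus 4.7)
The plan is to verify each identity by tracing the structural changes induced by the soft contraction $\mathcal{G}_{\mathfrak{f}^0}/e$ described in Definition~\ref{def:constran}, proceeding case by case on the type of trivial loop. For the basic graph-theoretic data, I would apply Definition~\ref{def:constran} directly: contracting a $p$-inner loop replaces $e$ and its end vertex $v$ by $p$ disjoint disc vertices and one vertex $v'$ (possibly disconnected or empty), assembled from the outer strands and the remaining pre-edges of $v$. For each of the five trivial cases, the discussion preceding this lemma together with Fig.~\ref{fig:wpcont4} shows that $v$ is decomposed into exactly four pieces (some discs, some non-trivial vertices); this yields $V' = V + 3$ and $E' = E - 1$ at once. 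The hypothesis that $e$ is \emph{trivial} (no edge between distinct sectors of $v$) ensures that these four pieces share no adjacencies in $\mathcal{G}'_{\mathfrak{f}^0}$ outside the now-removed loop, so the connected component hosting $v$ splits into four, giving $k' = k + 3$.

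For the internal face count, contraction destroys exactly the $p$ inner faces of $e$, while the $p$ new disc vertices each contribute one closed face from their unique closed chord; the cancellation yields $F'_{\rm int} = F_{\rm int}$. All boundary quantities are then immediate from Lemma~\ref{lem:fullcont}, which guarantees $\partial \mathcal{G}'_{\mathfrak{f}^0} = \partial \mathcal{G}_{\mathfrak{f}^0}$; hence $C'_\partial = C_\partial$, $f' = f$, $E'_\partial = E_\partial$, and $F'_\partial = F_\partial$ all follow simultaneously.

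The main effort is the bubble counts. I would exploit the color structure: the four strands of the color-$c$ loop $e$ are labeled bijectively by the remaining colors $c_1, c_2, c_3, c_4$; for a $p$-inner loop, exactly $p$ of these strands close internally and $4 - p$ are outer. A $d$-bubble ($d \in \{3,4\}$) containing $e$ is indexed by a choice of $d-1$ second colors from $\{c_1,\dots,c_4\}$, and it is \emph{fully destroyed} precisely when all $d-1$ chosen strands at $e$ are inner, in which case the bubble closes off completely at $v$ and dissolves into trivial discs. For the 4-inner case, every strand is inner, destroying every such bubble and yielding $\complement_4^{d-1}$ losses; for 3-inner, the $d-1$ chosen colors must lie among the three inner ones, giving $\complement_3^{d-1}$ losses. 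Since no new non-trivial bubbles appear (a lone outer strand merely extends a bubble already present, rather than spawning a new component), this establishes (\ref{eq:cself2}) and (\ref{eq:cself3}).

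For the trivial $p$-inner cases with $p \in \{0,1,2\}$, bubbles can both disappear and split, and this is where the real bookkeeping happens. A $d$-bubble containing $e$ is destroyed when all its chosen second colors are inner (accounting for $\complement_p^{d-1}$ destructions), while a bubble involving at least one outer color is partitioned into several pieces by the sector-splitting of $v$. The triviality hypothesis is decisive here: it guarantees that the sectors separate cleanly after contraction, so that each partitioned bubble fragments into as many new components as sectors it visits, rather than being re-merged through some parasite edge between sectors. Summing contributions over color subsets of size $d-1$ and weighting each by the number of sectors it touches gives the net counts $B'^3 - B^3 = 6 - 3p = \beta_p$ and $B'^4 - B^4 = 8 - 3p = \beta'_p$. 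The main obstacle is precisely this weighted enumeration: for each color subset one must identify, using the chord structure at $v$, exactly how many of the four sectors host a fragment of the bubble, then check that the totals are independent of the particular pairing of outer strands permitted by triviality, so that the final result reduces to the uniform formulas $\beta_p$ and $\beta'_p$.
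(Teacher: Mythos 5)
Your proposal is correct and follows essentially the same route as the paper: the easy counts come straight from the definition of contraction (with boundary data fixed by Lemma~\ref{lem:fullcont}), and the bubble variations are obtained by indexing the $d$-bubbles through $e$ by their color subsets, losing exactly those whose strands at $e$ are all inner and splitting the others across the sectors created at $v$, which triviality keeps disconnected. The only difference is cosmetic: you package the $p=0,1,2$ bookkeeping as a single weighted sum over color subsets (net gain equal to the number of outer strands in the subset minus one), whereas the paper runs through the cases $p=3,2,1,0$ explicitly; both yield $\beta_p=6-3p$ and $\beta'_p=8-3p$.
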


\begin{proof}
The case of a 4-inner loop is that of a particular
closed graph constituted by a single vertex with a unique loop.
The contraction destroys the vertex, the six closed 3-bubbles, the four closed 4-bubbles and produces 4 discs. The result is immediate.

For $p$-inner loops, $p\leq 3$, there are other stranded half-edges
or edges on the same end vertex.
The two f\/irst lines of \eqref{eq:cself3}--\eqref{eq:cselfp} are direct using the def\/inition of the contraction that preserves (open and closed) strands.
We now turn on the number of bubbles of the graph.

Let us recall that we obtain the 3-bubbles by removing simultaneously from the initial graph two colors. The 4-bubbles are obtained by removing all strands with the pair of colors. In all the following discussion, removing a strand means removing all others having the same color.

Let us focus on the 3-inner loop case. We have 3 strands which are closed and one outer strand. By contracting this edge, we loose the 3 3-bubbles passing through the inner strands. The 3-bubbles which use the outer strand are still present after the contraction and just loose some internal faces. We also loose the only one closed 4-bubble passing through these inner strands. All 4-bubbles using the outer strand are still present after the contraction. This ends the proof of~\eqref{eq:cself3}.

In the case of the 2-inner loop, we have two outer strands and two strands which are immediately closed (the inner ones). Contracting the edge, we loose the closed 3-bubble formed by the two inner strands obtained by removing the two outer strands. Any other 3-bubble is obtained either by removing one of the outer strands and one of inner or by removing the two inner strands. The number of 3-bubbles (closed or opened) obtained in $\mathcal{G}_{\mathfrak{f}^0}$ or $\mathcal{G}'_{\mathfrak{f}^0}$ are the same performing the f\/irst operation. If we remove the two inner strands, the 3-bubbles using the outer strands in $\mathcal{G}_{\mathfrak{f}^0}$ get split in two parts in $\mathcal{G}'_{\mathfrak{f}^0}$. We then obtain one more bubble in $ \mathcal{G}'_{\mathfrak{f}^0}$. This compensates the one we have lost previously. Hence $B'^3_{\rm int} + B'^3_{\rm ext} = B^3_{{\rm int}} + B^3_{\rm ext}$.

The number of 4-bubbles obtained by removing one of the outer strands is preserved in the contracted graph.
If we remove one of the inner strands, we obtain one extra 4-bubble in the contracted graph. Then $B'^4_{\rm int} + B'^4_{\rm ext} = B^4_{{\rm int}} + B^4_{\rm ext}+2$.

For the 1-inner loop, the procedure is similar. We have three outer strands and one inner. Removing the inner and one of the outer strands, we obtain one more 3-bubble in the contracted graph but if we remove two of the outers, the number of 3-bubbles in the contracted graph is the same. We have $B'^3_{\rm int} + B'^3_{\rm ext} = B^3_{{\rm int}} + B^3_{\rm ext}+3$.
	
	We now concentrate on the $4$-bubbles case. If we remove the inner strand, we obtain two more $4$-bubbles in the contracted graph. Otherwise, if we remove one of the outer strands, we obtain one more $4$-bubbles in the contracted graph. Hence $B'^4_{\rm int} + B'^4_{\rm ext} = B^4_{{\rm int}} + B^4_{\rm ext}+5$.

Case of the 0-inner loop. By implementing the above techniques we can prove also $B'^3_{\rm int} + B'^3_{\rm ext} = B^3_{{\rm int}} + B^3_{\rm ext}+6$ and $B'^4_{\rm int} + B'^4_{\rm ext} = B^4_{{\rm int}} + B^4_{\rm ext}+8$.
\end{proof}

\begin{Lemma}[cut/contraction of special edges]\label{lem:cutbri}
Let $ \mathcal{G}_{\mathfrak{f}^0}$ be a representative of $\mathfrak{G}$ a~rank~$4$ w-colored graph
and $e$ an edge in $\mathcal{G}_{\mathfrak{f}^0}$. 

If $e$ is a bridge, we have
\begin{gather}
k( \mathcal{G}_{\mathfrak{f}^0}\vee e) = k(\mathcal{G}_{\mathfrak{f}^0}/e)+1,\qquad
V(\mathcal{G}_{\mathfrak{f}^0}\vee e) = V( \mathcal{G}_{\mathfrak{f}^0}/e) + 1,\nonumber\\
E( \mathcal{G}_{\mathfrak{f}^0}\vee e) = E( \mathcal{G}_{\mathfrak{f}^0}/e),\qquad
f( \mathcal{G}_{\mathfrak{f}^0} \vee e)= f(\mathcal{G}_{\mathfrak{f}^0}/e) +2, \label{eq:kve} \\
F_{{\rm int}}( \mathcal{G}_{\mathfrak{f}^0}\vee e)= F_{{\rm int}}(\mathcal{G}_{\mathfrak{f}^0}/e), \qquad
B^3_{{\rm int}}( \mathcal{G}_{\mathfrak{f}^0} \vee e)=B^3_{{\rm int}}( \mathcal{G}_{\mathfrak{f}^0}/e), \nonumber\\
B^4_{{\rm int}}( \mathcal{G}_{\mathfrak{f}^0} \vee e)=B^4_{{\rm int}}(\mathcal{G}_{\mathfrak{f}^0}/e),
\label{eq:fint} \\
C_\partial(\mathcal{G}_{\mathfrak{f}^0} \vee e) = C_\partial( \mathcal{G}_{\mathfrak{f}^0}/e) +1,
\qquad
E_\partial(\mathcal{G}_{\mathfrak{f}^0} \vee e) = E_\partial ( \mathcal{G}_{\mathfrak{f}^0}/e) +4, \nonumber\\
F_\partial(\mathcal{G}_{\mathfrak{f}^0} \vee e) = F_\partial( \mathcal{G}_{\mathfrak{f}^0}/e) + 6,
\label{eq:cextbord} \\
 B^3_{{\rm ext}}( \mathcal{G}_{\mathfrak{f}^0}\vee e) = B^3_{{\rm ext}}( \mathcal{G}_{\mathfrak{f}^0}/e) + 6,\qquad
 B^4_{{\rm ext}}( \mathcal{G}_{\mathfrak{f}^0}\vee e) = B^4_{{\rm ext}}(\mathcal{G}_{\mathfrak{f}^0}/e) + 4.
\label{eq:bubext}
\end{gather}

If $e$ is a trivial $p$-inner loop, $p=0,1,2,3$, we have
\begin{gather}
k( \mathcal{G}_{\mathfrak{f}^0}\vee e) = k(\mathcal{G}_{\mathfrak{f}^0}/e)-3,\qquad
V(\mathcal{G}_{\mathfrak{f}^0}\vee e) = V( \mathcal{G}_{\mathfrak{f}^0}/e)- 3,\qquad
E( \mathcal{G}_{\mathfrak{f}^0}\vee e) = E(\mathcal{G}_{\mathfrak{f}^0}/e),\nonumber\\
f( \mathcal{G}_{\mathfrak{f}^0} \vee e)= f(\mathcal{G}_{\mathfrak{f}^0}/e) +2, \label{eq:kvep} \\
F_{{\rm int}}( \mathcal{G}_{\mathfrak{f}^0}\vee e)+ C_\partial( \mathcal{G}_{\mathfrak{f}^0} \vee e)
= F_{{\rm int}}( \mathcal{G}_{\mathfrak{f}^0}/e) + C_\partial(\mathcal{G}_{\mathfrak{f}^0}/e)-3,\nonumber\\
 E_\partial(\mathcal{G}_{\mathfrak{f}^0} \vee e) = E_\partial (\mathcal{G}_{\mathfrak{f}^0}/e) + 4,\label{eq:fintp} \\
B^3_{{\rm int}}( \mathcal{G}_{\mathfrak{f}^0} \vee e) + B^3_{{\rm ext}}( \mathcal{G}_{\mathfrak{f}^0}\vee e)
=B^3_{{\rm int}}( \mathcal{G}_{\mathfrak{f}^0}/e) + B^3_{{\rm ext}}( \mathcal{G}_{\mathfrak{f}^0}/e) - (6-3p),\label{eq:cextp} \\
B^4_{{\rm int}}( \mathcal{G}_{\mathfrak{f}^0} \vee e) + B^4_{{\rm ext}}(\mathcal{G}_{\mathfrak{f}^0}\vee e)
=B^4_{{\rm int}}( \mathcal{G}_{\mathfrak{f}^0}/e) + B^4_{{\rm ext}}(\mathcal{G}_{\mathfrak{f}^0}/e) - (8-3p).
\label{eq:cextp4}
\end{gather}
\end{Lemma}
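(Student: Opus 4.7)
The plan is to compute each listed quantity for $\mathcal{G}_{\mathfrak{f}^0}\vee e$ and for $\mathcal{G}_{\mathfrak{f}^0}/e$ independently, expressed in terms of the data of $\mathcal{G}_{\mathfrak{f}^0}$, and then subtract the two sets of formulas to obtain the stated identities. The argument naturally splits according to whether $e$ is a bridge or a trivial $p$-inner loop.

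\textbf{Bridge case.} Since $e$ is a non-loop whose removal disconnects its endpoints $v_1,v_2$, cutting $e$ replaces it by two rank-$4$ half-edges, one at each endpoint, which preserves $V$, decreases $E$ by one, increases $k$ by one, and adds two to $f$. Contracting $e$, which must be a $0$-inner non-loop, merges $v_1$ and $v_2$ into a single new vertex, decreasing $V$ and $E$ by one each and leaving $k,f$ unchanged. Subtracting these two sets of data gives \eqref{eq:kve}. For \eqref{eq:fint}, contraction preserves the internal face and inner $3$/$4$-bubble counts, since the strands of $e$ and the colour-subgraphs defining the bubbles are merely rerouted through the merged vertex; cutting preserves them because such closed structures live entirely on one side of the bridge and so do not feel the cut. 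For \eqref{eq:cextbord}--\eqref{eq:bubext}, the two newly created half-edges carry eight external segments that pair into four new external faces, they sit in a new boundary component separating the two sides of the former bridge, and a direct colour-pair count at the half-edges contributes six new external $3$-bubbles and four new external $4$-bubbles, together with the six additional boundary faces recorded in $F_\partial$.

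\textbf{Trivial $p$-inner loop case.} On the contraction side, Lemma~\ref{lem:tris} already supplies all the needed data: $k\mapsto k+3$, $V\mapsto V+3$, $E\mapsto E-1$, while $F_{\rm int}$, $C_\partial$, $f$, $E_\partial$, $F_\partial$ are unchanged, and $B^3_{\rm int}+B^3_{\rm ext}$, $B^4_{\rm int}+B^4_{\rm ext}$ are shifted by $\beta_p=6-3p$ and $\beta'_p=8-3p$ respectively. For $\mathcal{G}_{\mathfrak{f}^0}\vee e$, cutting a loop leaves $V$ and $k$ unchanged, decreases $E$ by one, and creates two half-edges at the common endpoint; the $p$ inner faces of $e$ open into external faces while the $4-p$ outer strands each contribute one additional external face upon being cut, giving $E_\partial$ an increase of $4$ and producing the balance in \eqref{eq:fintp}. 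Crucially, cutting neither creates nor destroys any $3$- or $4$-bubble: each colour-subgraph defining a bubble is merely toggled from closed to open when its strands crossing $e$ are broken, so $B^{3,4}_{\rm int}+B^{3,4}_{\rm ext}$ is preserved under cutting. Subtracting this invariance from the Lemma~\ref{lem:tris} shifts on the contraction side then yields \eqref{eq:cextp}--\eqref{eq:cextp4}, with the deficits $6-3p$ and $8-3p$ arising directly from $\beta_p$ and $\beta'_p$.

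\textbf{Main obstacle.} The delicate step is the bubble accounting for $\mathcal{G}_{\mathfrak{f}^0}\vee e$ in the trivial $p$-inner loop case: one must verify carefully that cutting neither splits nor merges any colour-subgraph defining a $3$- or $4$-bubble, but only toggles its open/closed status. This requires running a strand-pair and colour-removal analysis analogous to the one carried out inside the proof of Lemma~\ref{lem:tris}, and tracking how each of the four strands of $e$ (the $p$ inner plus $4-p$ outer) contributes to each colour-subgraph. The bridge identities are comparatively routine once the cut-creates-half-edges and contraction-merges-vertices patterns are combined with the observation that closed faces and closed $i$-bubbles do not thread through a bridge in these graphs.
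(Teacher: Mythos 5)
Your proposal follows essentially the same route as the paper: the bridge case is handled by direct counting, using the fact that every face through a bridge is open (so closed faces and closed bubbles are untouched) together with the observation that the six $3$-bubbles and four $4$-bubbles threading the bridge each split in two under the cut; and the loop case compares $\mathcal{G}_{\mathfrak{f}^0}\vee e$ to $\mathcal{G}_{\mathfrak{f}^0}$ and combines this with Lemma~\ref{lem:tris} for the contracted graph, exactly as the paper does. The one place you are lighter than the paper is \eqref{eq:fintp}: the paper verifies that $F_{{\rm int}}+C_\partial$ drops by exactly $3$ under the cut through an explicit case analysis over how many of the $4-p$ outer faces of $e$ are open versus closed (the individual changes of $F_{{\rm int}}$ and of $C_\partial$ differ from case to case, only their sum is constant), whereas you assert the balance; that short check should be written out.
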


\begin{proof} Let us begin by the case of the bridge.
In~\eqref{eq:kve}, the relations are directly achieved.
Now, we turn on \eqref{eq:fint}. From \cite[Lemma~7]{remia}, we claim that the faces passing through~$e$ are open and belong to the same connected component of the boundary graph. Then
all closed faces on each side of the bridge are preserved
after cutting~$e$. The same are still preserved after
edge contraction and therefore $F_{{\rm int}}(\mathcal{G}_{\mathfrak{f}^0}\vee e) = F_{{\rm int}}( \mathcal{G}_{\mathfrak{f}^0}/e)$,
$B^3_{{\rm int}}( \mathcal{G}_{\mathfrak{f}^0} \vee e)=B^3_{{\rm int}}( \mathcal{G}_{\mathfrak{f}^0}/e)$ and $B^4_{{\rm int}}( \mathcal{G}_{\mathfrak{f}^0} \vee e)=B^4_{{\rm int}}( \mathcal{G}_{\mathfrak{f}^0}/e)$.
Let us focus on~\eqref{eq:cextbord}. Since the faces passing through $e$ belong to the same connected component, cutting~$e$, this unique component
yields two boundary components. We obtain
$C_\partial( \mathcal{G}_{\mathfrak{f}^0} \vee e) = C_\partial( \mathcal{G}_{\mathfrak{f}^0}/e) +1$,
$E_\partial( \mathcal{G}_{\mathfrak{f}^0} \vee e)= E_\partial (\mathcal{G}_{\mathfrak{f}^0}/e)+4$
(the cut of~$e$ divides each external face into two dif\/ferent
open strands) and
$F_\partial( \mathcal{G}_{\mathfrak{f}^0} \vee e)= F_\partial (\mathcal{G}_{\mathfrak{f}^0}/e)+6$
since $C_\partial( \mathcal{G}_{\mathfrak{f}^0}/e)=C_\partial(\mathcal{G}_{\mathfrak{f}^0})$,
$E_\partial ( \mathcal{G}_{\mathfrak{f}^0}/e)=E_\partial (\mathcal{G}_{\mathfrak{f}^0})$
and $F_\partial( \mathcal{G}_{\mathfrak{f}^0} /e)= F_\partial (\mathcal{G}_{\mathfrak{f}^0})$
 which are direct from Lemma~\ref{lem:fullcont}.
For the number of external bubbles, there are six 3-bubbles and four 4-bubbles in~$ \mathcal{G}_{\mathfrak{f}^0}$ passing through the bridge. These bubbles are present in $ \mathcal{G}_{\mathfrak{f}^0}/e$ and cutting the bridge
each of these bubbles splits in two. This yields~\eqref{eq:bubext}.

We concentrate now on a trivial $p$-inner loop $e$.
The relations~\eqref{eq:kvep} is directly found.
We focus on the rest of the equations.
Consider the faces $f_i$ in $e$ made with outer strands.
For $p=0,1,2,3$, we have $f_{i}$, $1\leq i\leq 4-p$. These
faces can be open or closed. We do a case by case study
according to the number of open or closed faces among the
$f_i$'s.

Suppose that $4-p$ of $f_i$'s are closed.
Cutting $e$ entails
\begin{gather*}
F_{{\rm int}}( \mathcal{G}_{\mathfrak{f}^0}\vee e) = F_{{\rm int}} (\mathcal{G}_{\mathfrak{f}^0}) -4
,\qquad\!
C_{\partial}( \mathcal{G}_{\mathfrak{f}^0} \vee e) = C_\partial ( \mathcal{G}_{\mathfrak{f}^0}) +1
, \qquad\!
 E_\partial( \mathcal{G}_{\mathfrak{f}^0} \vee e) = E_\partial(\mathcal{G}_{\mathfrak{f}^0}) +4,\\
B^3(\mathcal{G}_{\mathfrak{f}^0} \vee e) = B^3(\mathcal{G}_{\mathfrak{f}^0}),\qquad B^4( \mathcal{G}_{\mathfrak{f}^0} \vee e) = B^4( \mathcal{G}_{\mathfrak{f}^0}),\qquad
 F_\partial(\mathcal{G}_{\mathfrak{f}^0} \vee e) = F_\partial( \mathcal{G}_{\mathfrak{f}^0}) + 6.
\end{gather*}
Remark that, in this situation, only the variation of the total number of bubbles can be known.

Suppose that $4-p-1$ of the $f_i$'s are closed
and one is open. Cutting $e$ implies
\begin{gather*}
F_{{\rm int}}( \mathcal{G}_{\mathfrak{f}^0}\vee e) = F_{{\rm int}} (\mathcal{G}_{\mathfrak{f}^0}) -3
,\qquad
C_{\partial}( \mathcal{G}_{\mathfrak{f}^0} \vee e) = C_\partial ( \mathcal{G}_{\mathfrak{f}^0})
, \qquad
 E_\partial( \mathcal{G}_{\mathfrak{f}^0} \vee e) = E_\partial(\mathcal{G}_{\mathfrak{f}^0}) +4,\\
B^3( \mathcal{G}_{\mathfrak{f}^0} \vee e) = B^3( \mathcal{G}_{\mathfrak{f}^0}),\qquad B^4( \mathcal{G}_{\mathfrak{f}^0} \vee e) = B^4( \mathcal{G}_{\mathfrak{f}^0}),
\qquad
 F_\partial( \mathcal{G}_{\mathfrak{f}^0} \vee e) = F_\partial( \mathcal{G}_{\mathfrak{f}^0}) +3.
\end{gather*}

Suppose that $4-p-2$ of the $f_i$'s are closed
and two are open. Cutting~$e$ gives
\begin{gather*}
F_{{\rm int}}( \mathcal{G}_{\mathfrak{f}^0}\vee e) = F_{{\rm int}} ( \mathcal{G}_{\mathfrak{f}^0}) -2
,\qquad\!
C_{\partial}( \mathcal{G}_{\mathfrak{f}^0} \vee e) = C_\partial (\mathcal{G}_{\mathfrak{f}^0}) -1
,\qquad\!
 E_\partial(\mathcal{G}_{\mathfrak{f}^0} \vee e) = E_\partial( \mathcal{G}_{\mathfrak{f}^0}) +4,\\
B^3( \mathcal{G}_{\mathfrak{f}^0} \vee e) = B^3( \mathcal{G}_{\mathfrak{f}^0}),\qquad B^4( \mathcal{G}_{\mathfrak{f}^0} \vee e) = B^4( \mathcal{G}_{\mathfrak{f}^0}),\qquad
 F_\partial( \mathcal{G}_{\mathfrak{f}^0} \vee e) = F_\partial(\mathcal{G}_{\mathfrak{f}^0}).
\end{gather*}

Suppose that $4-p-3$ of the $f_i$'s are closed
and three are open. Cutting $e$ gives
\begin{gather*}
F_{{\rm int}}( \mathcal{G}_{\mathfrak{f}^0}\vee e) = F_{{\rm int}} ( \mathcal{G}_{\mathfrak{f}^0}) -1
,\qquad\!
C_{\partial}( \mathcal{G}_{\mathfrak{f}^0} \vee e) = C_\partial ( \mathcal{G}_{\mathfrak{f}^0}) -2
,\qquad\!
 E_\partial( \mathcal{G}_{\mathfrak{f}^0} \vee e) = E_\partial( \mathcal{G}_{\mathfrak{f}^0}) +4,\\
B^3( \mathcal{G}_{\mathfrak{f}^0} \vee e) = B^3( \mathcal{G}_{\mathfrak{f}^0}),\qquad B^4( \mathcal{G}_{\mathfrak{f}^0} \vee e) = B^4(\mathcal{G}_{\mathfrak{f}^0}),\qquad
 F_\partial(\mathcal{G}_{\mathfrak{f}^0} \vee e) = F_\partial( \mathcal{G}_{\mathfrak{f}^0})-3.
\end{gather*}
Note that this case does not apply for $p=2$.

For $p=0$ an additional situation applies: assume that all
 four $f_i$'s are open. Cutting $e$ gives
\begin{gather*}
F_{{\rm int}}( \mathcal{G}_{\mathfrak{f}^0}\vee e) = F_{{\rm int}} (\mathcal{G}_{\mathfrak{f}^0})
,\qquad
C_{\partial}( \mathcal{G}_{\mathfrak{f}^0} \vee e) = C_\partial ( \mathcal{G}_{\mathfrak{f}^0}) -3
,\qquad
 E_\partial( \mathcal{G}_{\mathfrak{f}^0}\vee e) = E_\partial( \mathcal{G}_{\mathfrak{f}^0}) +4,\\
B^3( \mathcal{G}_{\mathfrak{f}^0} \vee e) = B^3( \mathcal{G}_{\mathfrak{f}^0}),\qquad B^4( \mathcal{G}_{\mathfrak{f}^0} \vee e) = B^4( \mathcal{G}_{\mathfrak{f}^0}),\qquad
 F_\partial(\mathcal{G}_{\mathfrak{f}^0} \vee e) = F_\partial( \mathcal{G}_{\mathfrak{f}^0})- 6.
\end{gather*}
Lemma~\ref{lem:tris} relates the same numbers for $ \mathcal{G}_{\mathfrak{f}^0}/e$ and $ \mathcal{G}_{\mathfrak{f}^0}$ from which one is able to prove~\eqref{eq:fintp},~\eqref{eq:cextp} and~\eqref{eq:cextp4}.
\end{proof}

Before focusing on the invariant, let us establish an intermediate result.

\begin{Lemma}\label{lemma:bulbes4}
For any representative $\mathcal{G}$ of a~rank~$4$ w-colored graph $\mathfrak{G}$, we have
\begin{gather} \label{eq:vefsum}
\sum_{{\mathbf{b}}^3 \in \mathcal{B}^3} V_{{\mathbf{b}}^3} \geq 6V, \qquad \sum_{{\mathbf{b}}^3 \in \mathcal{B}^3} E_{{\mathbf{b}}^3} = 6E, \qquad \sum_{{\mathbf{b}}^3\in \mathcal{B}^3} F_{{\rm int};{\mathbf{b}}^3} = 3F_{{\rm int}}.
\end{gather}
Moreover,
\begin{gather} \label{eq:sumdouble}
\sum_{{\mathbf{b}}^4 \in \mathcal{B}^4} B^3_{{\mathbf{b}}^4}=2B^3, \qquad 3B^4\leq2B^3, \qquad B^4\geq 4.
\end{gather}
\end{Lemma}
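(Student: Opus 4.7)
The plan is to verify each of the six assertions separately by incidence counting over color subsets. For the three claims in \eqref{eq:vefsum} I fix an edge, face, or vertex of $\mathcal{G}$ and count how many $3$-bubbles contain it; for those in \eqref{eq:sumdouble} I combine a double count between $3$- and $4$-bubbles with a structural lower bound.

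Consider first \eqref{eq:vefsum}. An edge $e$ of $\mathcal{G}$ carries a single color $c\in\{0,\ldots,4\}$ and belongs to the $3$-bubble of colors $T=\{i,j,k\}$ iff $c\in T$; the number of such triples is $\binom{4}{2}=6$, so summing over edges gives $\sum_{{\mathbf{b}}^3}E_{{\mathbf{b}}^3}=6E$. An internal face is a closed $2$-bubble with color pair $\{i,j\}$ and lies in the $3$-bubble $\{i,j,k\}$ for each of the $5-2=3$ choices of $k$, yielding $\sum_{{\mathbf{b}}^3}F_{{\rm int};{\mathbf{b}}^3}=3F_{{\rm int}}$. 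For the vertex inequality, let $C_v\subseteq\{0,\ldots,4\}$ denote the set of colors of pre-edges at $v$; then $v$ is a vertex of the $3$-bubble of colors $T$ iff $T\cap C_v\neq\varnothing$, which fails exactly for the $\binom{5-|C_v|}{3}$ triples $T\subseteq\{0,\ldots,4\}\setminus C_v$. Hence $v$ lies in $\binom{5}{3}-\binom{5-|C_v|}{3}$ three-bubbles, and this is minimized at $|C_v|=1$ with value~$6$; summing over vertices gives $\sum_{{\mathbf{b}}^3}V_{{\mathbf{b}}^3}\geq 6V$.

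For \eqref{eq:sumdouble}, I would first prove $\sum_{{\mathbf{b}}^4}B^3_{{\mathbf{b}}^4}=2B^3$: a $3$-bubble of colors $\{i,j,k\}$ is connected, so is contained in a unique $4$-bubble of colors $\{i,j,k,\ell\}$ for each of the $D-2=2$ admissible fourth colors $\ell$, and summing over $3$-bubbles yields the identity. Next, each $4$-bubble, being a connected rank $3$ colored tensor graph with edges of all four of its colors, restricts on every one of the $\binom{4}{3}=4$ three-color subsets of its color set to a non-empty subgraph with at least one connected component, so $B^3_{{\mathbf{b}}^4}\geq 4$ for every ${\mathbf{b}}^4\in\mathcal{B}^4$. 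Combining with the double count, $2B^3\geq 4B^4\geq 3B^4$, which is the required inequality. The bound $B^4\geq 4$ follows by applying the same idea globally: each of the $\binom{5}{4}=5$ four-color subsets of $\{0,\ldots,4\}$ labels at least one $4$-bubble of $\mathcal{G}$ (the connected component of the corresponding color-restricted subgraph), so $B^4\geq 5\geq 4$.

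The main obstacle I anticipate is the bookkeeping around trivial disc vertices, which can appear in an arbitrary representative but carry no colored pre-edges and hence require a careful convention in the vertex inequality; this is handled by invoking the equivalence up to trivial discs, or equivalently by agreeing that such discs belong to every $3$-bubble. A secondary technicality, shared with Lemmas~\ref{lem:tris} and~\ref{lem:cutbri}, is verifying that color-restricted subgraphs really do decompose into the advertised number of connected components after successive contractions, which may have produced unusual strand configurations; this reduces to a short strand-level case analysis and introduces no new ideas. Apart from these points, the proof is a routine double count over color subsets.
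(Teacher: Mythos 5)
Your overall strategy---double counting incidences between cells and color subsets---is exactly the paper's, and your treatment of \eqref{eq:vefsum} and of the identity $\sum_{{\mathbf{b}}^4}B^3_{{\mathbf{b}}^4}=2B^3$ matches the paper's argument (the paper phrases the vertex bound as ``each vertex decomposes into at least $6$ vertices of $3$-bubbles'', with the minimum attained by a vertex carrying a single colored pre-edge, which is precisely your $|C_v|=1$ case; your explicit handling of trivial discs is a reasonable patch for a point the paper leaves implicit).

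The gap is in the two lower bounds at the end of \eqref{eq:sumdouble}, where you tacitly assume that a bubble of a w-colored graph contains edges of \emph{all} of its defining colors. That holds for a colored tensor graph, where every vertex has coordination $D+1$ with every color present, but it fails after contraction: in the paper's own example $\mathcal{G}_{1,\mathfrak{f}^0(\mathcal{G}_1)}$ of Fig.~\ref{fig:npb}, the graph is a single edge of color $0$, so a $4$-bubble with colors $\{0,a,b,c\}$ contains only color-$0$ edges and has exactly the three $3$-bubbles whose color triples contain $0$. Hence your claim $B^3_{{\mathbf{b}}^4}\geq 4$ is false, and so is your intermediate inequality $2B^3\geq 4B^4$ (for $\mathcal{G}_1$ one has $B^3=6$ and $B^4=4$). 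The paper uses the correct sharp bound $B^3_{{\mathbf{b}}^4}\geq 3$: every $4$-bubble contains an edge of some color $i$, and each of the three $3$-color subsets of its color set containing $i$ yields at least one $3$-bubble; this is exactly why the lemma asserts $3B^4\leq 2B^3$ and not $4B^4\leq 2B^3$. The same over-counting affects your proof of $B^4\geq 4$: in $\mathcal{G}_1$ the color set $\{1,2,3,4\}$ carries no edge, so not every $4$-color subset labels a $4$-bubble and $B^4\geq 5$ is false ($B^4=4$ is attained); the correct argument is that any single edge, of some color $c$, lies in the four distinct $4$-bubbles whose color sets contain $c$. All the stated inequalities survive, but only after replacing your constants $4$ and $5$ by $3$ and $4$; as written, two links in your chain are wrong.
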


\begin{proof}
Each vertex of $\mathcal{G}_{\mathfrak{f}^0}$ can be decomposed at least in 6 vertices (6 vertices is the minimum given by the simplest vertex of the form~$ \mathcal{G}_{1,\mathfrak{f}^0(\mathcal{G}_1)}$ in Fig.~\ref{fig:npb} for any rank 4 w-colored graph) which could belong to a $3$-bubble. This gives the f\/irst relation in \eqref{eq:vefsum}. Furthermore, using the colors, one observes that each edge of~$\mathcal{G}_{\mathfrak{f}^0}$ splits into 6 ribbon edges, and each internal face of~$ \mathcal{G}_{\mathfrak{f}^0}$ belongs to three $3$-bubbles. Thus we have the rest of equations in~\eqref{eq:vefsum}.

We concentrate now on \eqref{eq:sumdouble}. Consider a $3$-bubble ${\mathbf{b}}^3$ in $\mathcal{G}_{\mathfrak{f}^0}$, ${\mathbf{b}}^3$ is made with~$3$ colors of~$\mathcal{G}_{\mathfrak{f}^0}$. Since $ \mathcal{G}_{\mathfrak{f}^0}$ is made with $5$ colors, ${\mathbf{b}}^3$ is contained exactly in two $4$-bubbles obtained by adding to the three colors in~${\mathbf{b}}^3$ one of the two remaining colors of~$\mathcal{G}_{\mathfrak{f}^0}$. Hence, the f\/irst equation in~\eqref{eq:sumdouble} follows. Using again the color prescription, each $4$-bubble has at least three $3$-bubbles, $B^3_{{\mathbf{b}}^4}\geq3$. Summing each member of this relation on the set of $4$-bubbles, we end the proof the second equation in~\eqref{eq:sumdouble}.
\end{proof}

Now, we can introduce a parametrized invariant on this category of graph.

\begin{Proposition}
\label{prop:euler4}
Let $ \mathcal{G}_{\mathfrak{f}^0}$ be any representative of a rank $4$ w-colored graph $\mathfrak{G}$. There exist some positive rational numbers $\alpha_3$ and $\alpha_4$ such that
\begin{gather*} 
\gamma_{4;(\alpha_3,\alpha_4)}( \mathcal{G}_{\mathfrak{f}^0})= 6(V-E) + 3F_{{\rm int}} - 2(1+\alpha_3)B^3+ (3\alpha_3-\alpha_4)B^4
\end{gather*}
is a negative integer.
\end{Proposition}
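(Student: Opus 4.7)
The plan is to realize $\gamma_{4;(\alpha_3,\alpha_4)}$ as a sum of three manifestly non-positive pieces, each controlled either by a ribbon-graph genus estimate at the 3-bubble level or by one of the combinatorial bounds of Lemma~\ref{lemma:bulbes4}. Every 3-bubble ${\mathbf{b}}^3$ of a representative $\mathcal{G}_{\mathfrak{f}^0}$ is a connected rank 2 colored tensor graph (possibly with half-edges), so non-negativity of its genus (together with the fact that any external face of the bubble only lowers the Euler characteristic of the associated surface) supplies the ribbon-type estimate
\begin{gather*}
V_{{\mathbf{b}}^3} - E_{{\mathbf{b}}^3} + F_{{\rm int};{\mathbf{b}}^3} \;\leq\; 2,
\end{gather*}
which is precisely the ingredient underlying Proposition~\ref{prop:euler3}. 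Summing over all $B^3$ 3-bubbles and substituting the three identities/inequalities of \eqref{eq:vefsum}, whose directions conspire to give an upper estimate, yields
\begin{gather*}
6V - 6E + 3F_{{\rm int}} \;\leq\; \sum_{{\mathbf{b}}^3}\big(V_{{\mathbf{b}}^3} - E_{{\mathbf{b}}^3} + F_{{\rm int};{\mathbf{b}}^3}\big) \;\leq\; 2B^3,
\end{gather*}
i.e., the base inequality $6(V-E) + 3F_{{\rm int}} - 2B^3 \leq 0$.

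With this in hand, I would rearrange the definition as
\begin{gather*}
\gamma_{4;(\alpha_3,\alpha_4)} = \big[6(V-E) + 3F_{{\rm int}} - 2B^3\big] + \alpha_3\big(3B^4 - 2B^3\big) - \alpha_4 B^4.
\end{gather*}
The first bracket is $\leq 0$ by the previous step; the second summand is $\leq 0$ because $\alpha_3 > 0$ and $3B^4 - 2B^3 \leq 0$ by \eqref{eq:sumdouble}; the third is $\leq 0$ because $\alpha_4 > 0$ and $B^4 \geq 4 > 0$, again from \eqref{eq:sumdouble}. Adding the three pieces gives $\gamma_{4;(\alpha_3,\alpha_4)} \leq -4\alpha_4 < 0$, strict negativity being forced by the last summand.

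For integrality, it suffices to pick $\alpha_3,\alpha_4 \in \mathbb{Q}^+$ whose denominators are compatible with $B^3$ and $B^4$; the simplest sufficient choice is $\alpha_3,\alpha_4 \in \mathbb{Z}^+$, in which case $\gamma_{4;(\alpha_3,\alpha_4)}$ is patently an integer. The hard part I anticipate is the opening step: ensuring the ribbon-graph Euler bound holds uniformly for closed and open 3-bubbles, since external faces contribute boundary components and the naive formula $\chi = 2-2g$ must be amended accordingly. Once that bookkeeping is pinned down, the remainder is a direct consequence of Lemma~\ref{lemma:bulbes4} together with the choice of denominators for $\alpha_3$ and $\alpha_4$.
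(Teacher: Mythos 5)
Your proposal is correct and follows essentially the same route as the paper: the ribbon-graph Euler bound $V_{{\mathbf{b}}^3}-E_{{\mathbf{b}}^3}+F_{{\rm int};{\mathbf{b}}^3}\leq 2$ on each 3-bubble (the paper handles the open case exactly as you anticipate, via $V-E+F_{\rm int}+C_\partial-2=-\kappa\leq 0$), summation combined with Lemma~\ref{lemma:bulbes4}, and then the same linear combination in $\alpha_3$, $\alpha_4$ yielding $\gamma_{4;(\alpha_3,\alpha_4)}\leq -4\alpha_4<0$. Your presentation as a sum of three manifestly non-positive pieces is just a rearrangement of the paper's scaled inequalities, and your integrality remark matches the paper's.
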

\begin{proof}
Consider a representative $\mathcal{G}_{\mathfrak{f}^0}$ of a rank $4$ w-colored graph. The $3$-bubbles of~$ \mathcal{G}_{\mathfrak{f}^0}$ are connected rank~$2$ stranded graphs or ribbon graphs with half-edges. Consider a $3$-bubble~${\mathbf{b}}^3$ of~$\mathcal{G}_{\mathfrak{f}^0}$ and~$\widetilde{{\mathbf{b}}^3}$ its underline ribbon graph. Using the Euler formula, one has
\begin{gather*} 
V_{{\mathbf{b}}^3}-E_{{\mathbf{b}}^3}+ F_{{\rm int};{\mathbf{b}}^3}+C_\partial\big({\mathbf{b}}^3\big)-2=-\kappa\leq 0,
\end{gather*}
with $\kappa$ the genus of $\widetilde{{\mathbf{b}}^3}$. Hence,
\begin{gather} \label{eq:forran2}
V_{{\mathbf{b}}^3}-E_{{\mathbf{b}}^3}+ F_{{\rm int};{\mathbf{b}}^3}\leq 2.
\end{gather}
A summation on all the $3$-bubbles of $ \mathcal{G}_{\mathfrak{f}^0}$ gives
\begin{gather*}
\sum_{{\mathbf{b}}^3 \in \mathcal{B}^3}(V_{{\mathbf{b}}^3}-E_{{\mathbf{b}}^3})+ \sum_{{\mathbf{b}}^3 \in \mathcal{B}^3}F_{{\rm int};{\mathbf{b}}^3} -2B^3\leq 0.
\end{gather*}
From Lemma \ref{lemma:bulbes4}, we have
\begin{gather} \label{eq:forrank4}
6(V-E) + 3F_{{\rm int}} - 2B^3\leq 0, \qquad 3B^4 - 2B^3\leq0, \qquad -B^4+4\leq 0.
\end{gather}

Consider some positive rational numbers $\alpha_3$ and $\alpha_4$ such that $- 2(1+\alpha_3)B^3+ (3\alpha_3-\alpha_4)B^4$ is an integer. Multiplying the two last relations in~\eqref{eq:forrank4} by~$\alpha_3$ and~$\alpha_4$, respectively, we obtain~$3\alpha_3B^4 - 2\alpha_3B^3\leq0$ and $-\alpha_4B^4+4\alpha_4\leq0$. Combining these two last relations with the f\/irst relation in~\eqref{eq:forrank4}, we obtain
\begin{gather} \label{eq:forran4}
6(V-E) + 3F_{{\rm int}} - 2(1+\alpha_3)B^3+ (3\alpha_3-\alpha_4)B^4+4\alpha_4\leq 0,
\end{gather}
i.e., $\gamma_{4;(\alpha_3,\alpha_4)}( \mathcal{G}_{\mathfrak{f}^0})$ is a negative integer.
\end{proof}

Clearly, the quantity $\gamma_{4;(\alpha_3,\alpha_4)}(\cdot)$ is a negative integer for all non negative integers $\alpha_3$ and $\alpha_4$. Furthermore, $\gamma_{4;(\alpha_3,\alpha_4)}(\cdot)$ is still a negative integer if $\alpha_3$ and $\alpha_4$ are equal to specif\/ic positive rational numbers.

\looseness=-1
We consider the combinatorial object $6(V-E) + 3F_{{\rm int}} - 2(1+\alpha_3)B^3+ (3\alpha_3-\alpha_4)B^4$ as a~ge\-ne\-ra\=li\-zed Euler characteristic. Indeed, f\/ixing $3\alpha_3-\alpha_4>0$ (since $1+\alpha_3>0$), this integer is reminiscent of a~weighted form of the Euler characteristic, if one considers the $B^d$'s as Betti numbers. Of course, one also expects an alternating sum of positive integers as in the Euler charac\-te\-ris\-tic. Here we see that we can achieve this only by tuning properly the parameters alphas.

We can def\/ine the polynomial invariant on rank 4 w-colored graphs.

\begin{Definition}[topological invariant for rank 4 w-colored
graph]\label{def:topotens4}
Let $\mathfrak{G}(\mathcal{V},\mathcal{E},\mathfrak{f}^0)$ be a rank~4 w-colored graph and~$\alpha_3$ and $\alpha_4$ some positive rational numbers.
The generalized topological invariant associated with~$\mathfrak{G}$
is given by the following function associated with any of its
representatives $\mathcal{G}$ (using the above notations)
\begin{gather*}
\mathfrak{T}_{\mathfrak{G};(\alpha_3,\alpha_4)}(x,y,z,s,w,q,t) =\mathfrak{T}_{ \mathcal{G}_{\mathfrak{f}^0};(\alpha_3,\alpha_4)}(x,y,z,s,w,q,t)\\ 
\qquad{} =\sum_{A \Subset \mathcal{G}_{\mathfrak{f}^0}}
 (x-1)^{\rk(\mathcal{G})-\rk(A)}(y-1)^{n(A)}
z^{9k(A)-\gamma_{4;(\alpha_3,\alpha_4)}(A)}s^{C_\partial(A)} w^{F_{\partial}(A)} q^{E_{\partial}(A)} t^{f(A)},
\end{gather*}
with $\gamma_{4;(\alpha_3,\alpha_4)}(A)= 6(V(A)-E(A)) + 3F_{{\rm int}}(A) - 2(1+\alpha_3)B^3(A)+ (3\alpha_3-\alpha_4)B^4(A)$ a negative integer.
\end{Definition}

Comparing this polynomial with the one def\/ined on rank 3 w-colored of c-subgraphs, we see that in addition to the 3-bubbles, the polynomial invariant also takes into consideration 4-bubbles.

We can now introduce our main theorem.

\begin{Theorem}[contraction/cut rule for w-colored graphs]
\label{theo:contens4}
Let $\mathfrak{G}$ be a rank~$4$ w-colored graph and~$\alpha_3$ and~$\alpha_4$ some positive rational numbers. Then, for a regular edge~$e$ of any of the representa\-ti\-ve~$ \mathcal{G}_{\mathfrak{f}^0}$ of $\mathfrak{G}$, we have
\begin{gather}
\mathfrak{T}_{ \mathcal{G}_{\mathfrak{f}^0};(\alpha_3,\alpha_4)}=\mathfrak{T}_{ \mathcal{G}_{\mathfrak{f}^0}\vee e;(\alpha_3,\alpha_4)} +\mathfrak{T}_{ \mathcal{G}_{\mathfrak{f}^0}/e;(\alpha_3,\alpha_4)}.
\label{tenscondel}
\end{gather}
For a bridge $e$, we have $\mathfrak{T}_{\mathcal{G}_{\mathfrak{f}^0} \vee e;(\alpha_3,\alpha_4)}= z^{15+4\alpha_4}sw^6q^4t^2 \mathfrak{T}_{ \mathcal{G}_{\mathfrak{f}^0}/e;(\alpha_3,\alpha_4)}$
and
\begin{gather}
\mathfrak{T}_{\mathcal{G}_{\mathfrak{f}^0};(\alpha_3,\alpha_4)} =[(x-1)z^{15+4\alpha_4}sw^6q^4t^2+1] \mathfrak{T}_{ \mathcal{G}_{\mathfrak{f}^0}/e;(\alpha_3,\alpha_4)}.
\label{tensbri}
\end{gather}
For a trivial p-inner loop $e$, $p=0,1,2,3$, we have
\begin{gather}
\mathfrak{T}_{ \mathcal{G}_{\mathfrak{f}^0};(\alpha_3,\alpha_4)}=\mathfrak{T}_{ \mathcal{G}_{\mathfrak{f}^0} \vee e;(\alpha_3,\alpha_4)} + (y-1)z^{-15+6p+3\alpha_3(4-p)+\alpha_4(3p-8)} \mathfrak{T}_{\mathcal{G}_{\mathfrak{f}^0}/e;(\alpha_3,\alpha_4)}.
\label{tensself}
\end{gather}
\end{Theorem}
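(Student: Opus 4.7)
The plan is to prove all three relations by splitting the sum defining $\mathfrak{T}_{\mathcal{G}_{\mathfrak{f}^0};(\alpha_3,\alpha_4)}$ over spanning c-subgraphs into two pieces according to whether $e\in A$ or not, and identifying each piece with $\mathfrak{T}_{\mathcal{G}_{\mathfrak{f}^0}\vee e;(\alpha_3,\alpha_4)}$ or $\mathfrak{T}_{\mathcal{G}_{\mathfrak{f}^0}/e;(\alpha_3,\alpha_4)}$ via natural bijections. For $e\notin A$ one views $A$ directly as a c-subgraph of $\mathcal{G}_{\mathfrak{f}^0}\vee e$ with the same underlying data, while for $e\in A$ one sends $A\mapsto A/e\Subset\mathcal{G}_{\mathfrak{f}^0}/e$. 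What remains in each case is to compare the monomial weights attached to $A$ and its image.

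For the regular edge case \eqref{tenscondel}, since $e$ is neither a bridge nor a loop, cutting preserves all of $k,V,E,F_{{\rm int}},B^3,B^4$ and the boundary data, while contraction decreases $V$ and $E$ by one without changing $k$ or any face/bubble count. Consequently $\rk(\mathcal{G}_{\mathfrak{f}^0})=\rk(\mathcal{G}_{\mathfrak{f}^0}\vee e)$, $\rk(\mathcal{G}_{\mathfrak{f}^0}/e)=\rk(\mathcal{G}_{\mathfrak{f}^0})-1$, and for $A\ni e$ one has $\rk(A/e)=\rk(A)-1$, $n(A/e)=n(A)$, $k(A/e)=k(A)$, $\gamma_{4;(\alpha_3,\alpha_4)}(A/e)=\gamma_{4;(\alpha_3,\alpha_4)}(A)$. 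Thus the $z$-exponent $9k-\gamma_{4;(\alpha_3,\alpha_4)}$ and the remaining exponents are preserved under both bijections, yielding \eqref{tenscondel} exactly as in the rank $3$ argument of Theorem~\ref{theo:contens3}.

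For the bridge case \eqref{tensbri}, I invoke Lemma~\ref{lem:cutbri} on each $B\Subset\mathcal{G}_{\mathfrak{f}^0}$ containing $e$ ($e$ remains a bridge of $B$). The identity $\mathfrak{T}_{\mathcal{G}_{\mathfrak{f}^0}\vee e;(\alpha_3,\alpha_4)}=z^{15+4\alpha_4}sw^6q^4t^2\,\mathfrak{T}_{\mathcal{G}_{\mathfrak{f}^0}/e;(\alpha_3,\alpha_4)}$ reduces to computing the shift of $9k-\gamma_{4;(\alpha_3,\alpha_4)}$ between $B\vee e$ and $B/e$: with $\Delta k=\Delta V=1$, $\Delta E=\Delta F_{{\rm int}}=0$, $\Delta B^3=6$, $\Delta B^4=4$ from \eqref{eq:kve}--\eqref{eq:bubext}, one obtains $\Delta\gamma_{4;(\alpha_3,\alpha_4)}=6-12(1+\alpha_3)+4(3\alpha_3-\alpha_4)=-6-4\alpha_4$ and hence $9\Delta k-\Delta\gamma_{4;(\alpha_3,\alpha_4)}=15+4\alpha_4$. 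The factors $s^1,w^6,q^4,t^2$ come directly from \eqref{eq:cextbord} and $\Delta f=2$. Splitting $\mathfrak{T}_{\mathcal{G}_{\mathfrak{f}^0};(\alpha_3,\alpha_4)}$ as above and noting that the cut piece picks up an extra $(x-1)$ since $\rk(\mathcal{G}_{\mathfrak{f}^0})=\rk(\mathcal{G}_{\mathfrak{f}^0}\vee e)+1$, while the contracted piece matches without shift, gives $\mathfrak{T}_{\mathcal{G}_{\mathfrak{f}^0}}=(x-1)\mathfrak{T}_{\mathcal{G}_{\mathfrak{f}^0}\vee e}+\mathfrak{T}_{\mathcal{G}_{\mathfrak{f}^0}/e}$; substituting the first identity delivers \eqref{tensbri}.

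For the trivial $p$-inner loop case \eqref{tensself}, I apply Lemma~\ref{lem:tris} to each $A\Subset\mathcal{G}_{\mathfrak{f}^0}$ containing $e$; triviality descends from $\mathcal{G}_{\mathfrak{f}^0}$ to $A$ because the separate-sectors condition is monotone under edge removal. From $\Delta V=\Delta k=3$, $\Delta E=-1$, $\Delta F_{{\rm int}}=0$, $\Delta B^3=6-3p$, $\Delta B^4=8-3p$ (contracted minus original), one computes $\Delta\gamma_{4;(\alpha_3,\alpha_4)}=12+6p+3\alpha_3(4-p)+\alpha_4(3p-8)$ and therefore $9\Delta k-\Delta\gamma_{4;(\alpha_3,\alpha_4)}=15-6p-3\alpha_3(4-p)-\alpha_4(3p-8)$; inversion produces the $z$-exponent appearing in \eqref{tensself}. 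The factor $(y-1)$ arises from $n(A)=n(A/e)+1$, while $s,w,q,t$ are unaffected since $C_\partial,F_\partial,E_\partial,f$ are all individually preserved by a trivial loop contraction. Combined with the trivial identification of the $e\notin A$ sum with $\mathfrak{T}_{\mathcal{G}_{\mathfrak{f}^0}\vee e;(\alpha_3,\alpha_4)}$, this yields \eqref{tensself}. The main technical subtlety is that Lemma~\ref{lem:cutbri} splits the trivial-loop analysis into subcases depending on how many outer faces are open, which give different individual values of $F_{{\rm int}}$ and $C_\partial$ even though their combination is universal; this is neatly sidestepped here by applying Lemma~\ref{lem:tris} directly to the pair $(A,A/e)$, which preserves these quantities separately regardless of subcase.
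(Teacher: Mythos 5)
Your proposal is correct and follows essentially the same route as the paper: split the subgraph sum according to whether $e\in A$, use the natural bijections with $\mathcal{G}_{\mathfrak{f}^0}\vee e$ and $\mathcal{G}_{\mathfrak{f}^0}/e$, and compute the exponent shifts from Lemmas~\ref{lem:cutbri} and~\ref{lem:tris}; your arithmetic for $9\Delta k-\Delta\gamma_{4;(\alpha_3,\alpha_4)}$ in both the bridge and trivial-loop cases matches the stated exponents. You in fact make the bridge computation more explicit than the paper does, but the underlying argument is the same.
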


\begin{proof}
Let $\mathcal{G}_{\mathfrak{f}^0}$ be a representative of a rank 4 w-colored graph $\mathfrak{G}$. Our main concern is the change in the number of internal and external $i$-bubbles ($i=3,4$) and the independence of the f\/inal
result on the representative $ \mathcal{G}_{\mathfrak{f}^0}$. From Proposition~\ref{lem:cutmg}, we can def\/ine $\mathfrak{G}\vee e$ and $\mathfrak{G}/e$ and conclude that the resulting polynomials $\mathfrak{T}_{\mathcal{G}_{\mathfrak{f}^0} \vee e;(\alpha_3,\alpha_4)}$ and $\mathfrak{T}_{\mathcal{G}_{\mathfrak{f}^0} /e;(\alpha_3,\alpha_4)}$ are independent on the representative $ \mathcal{G}_{\mathfrak{f}^0} \vee e$ and $ \mathcal{G}_{\mathfrak{f}^0}/e$, respectively.

Let us prove the equation \eqref{tenscondel}.
 Consider an ordinary edge $e$ of $ \mathcal{G}_{\mathfrak{f}^0}$, the set of spanning c-subgraphs which do not contain $e$ being the same as the set of spanning c-subgraphs of $ \mathcal{G}_{\mathfrak{f}^0} \vee e$, the number of open and closed $i$-bubbles ($i=3,4$) on each subgraph is the same, it is direct to get $ \sum\limits_{A \Subset \mathcal{G}_{\mathfrak{f}^0} ; e\notin A} (\cdot)=\mathfrak{T}_{ \mathcal{G}_{\mathfrak{f}^0} \vee e;(\alpha_3,\alpha_4)} $. The strands of $e$ are clearly preserved after the contraction.
The bubbles which do not pass through $e$ are not af\/fected at all
by the procedure. The bubbles passing through~$e$
are also preserved since the contraction
do not delete faces or strands. Hence
$\sum\limits_{A \Subset\mathcal{G}_{\mathfrak{f}^0} ; e\in A} (\cdot)=\mathfrak{T}_{ \mathcal{G}_{\mathfrak{f}^0} /e;(\alpha_3,\alpha_4)} $.

We now concentrate on the bridge case and \eqref{tensbri}.
Cutting a bridge yields, from the sum $\sum\limits_{A \Subset \mathcal{G}_{\mathfrak{f}^0} ; e\notin A}$, the product $(x-1)\mathfrak{T}_{ \mathcal{G}_{\mathfrak{f}^0} \vee e;(\alpha_3,\alpha_4)}$. Using the mapping between
$\{A \Subset \mathcal{G}_{\mathfrak{f}^0};e\in A\}$ and $\{A\Subset \mathcal{G}_{\mathfrak{f}^0}/e\}$, we obtain $\sum\limits_{A \Subset \mathcal{G}_{\mathfrak{f}^0} ; e\in A} (\cdot)=\mathfrak{T}_{ \mathcal{G}_{\mathfrak{f}^0} /e;(\alpha_3,\alpha_4)} $.
We now investigate the relation between $\mathfrak{T}_{ \mathcal{G}_{\mathfrak{f}^0} \vee e;(\alpha_3,\alpha_4)}$ and $\mathfrak{T}_{ \mathcal{G}_{\mathfrak{f}^0}/e;(\alpha_3,\alpha_4)}$.
There is a bijection between $A\Subset \mathcal{G}_{\mathfrak{f}^0} \vee e$ and
$A' \Subset \mathcal{G}_{\mathfrak{f}^0}/e$ where each $A$ and $A'$ are both
uniquely related to some $A_0\Subset \mathcal{G}_{\mathfrak{f}^0}$ as $A = A_0\vee e$
and $A'=A_0/e$. Using Lemma~\ref{lem:cutbri},
the relation~\eqref{tensbri} follows.

 Let us investigate the trivial $p$-inner loop case and prove~\eqref{tensself}.
As discussed earlier, the relation $\sum\limits_{A \Subset \mathcal{G}_{\mathfrak{f}^0} ; e\notin A}(\cdot)
=\mathfrak{T}_{ \mathcal{G}_{\mathfrak{f}^0} \vee e;(\alpha_3,\alpha_4)} $ should be direct. Focus now on the second sum.

Consider a trivial $p$-inner loop $e$ in $ \mathcal{G}_{\mathfrak{f}^0}$. Then $e$ is a trivial $p$-inner loop in all $A\Subset \mathcal{G}_{\mathfrak{f}^0}$ containing~$e$. Contracting~$e$ generates~$p$ discs and $4-p$ non trivial vertices.
Using Lemma~\ref{lem:tris}, the nullity is $n(A) = n(A') + 1$ which ensures the factor~$y-1$, and the exponent of~$z$ becomes
\begin{gather*}
9k(A) - \big[6(V( \mathcal{G}_{\mathfrak{f}^0}) - E(A)) + 3F_{\rm int}(A) - 2(1+\alpha_3)B^3(A)+(3\alpha_3-\alpha_4)B^4(A)\big]\\
\qquad{} = 9(k(A')-3)
- \big[6[V(\mathcal{G}_{\mathfrak{f}^0}/e)-3) - (E(A')+1)]
 + 3F_{\rm int}(A')\\
\qquad\quad{}- 2(1+\alpha_3)(B^3(A')-\beta_p) +
(3\alpha_3-\alpha_4)(B^4(A')-\beta'_p)\big] \\
\qquad{} = 9k(A') - \big[6(V(\mathcal{G}_{\mathfrak{f}^0}) - E(A')) + 3F_{\rm int}(A') - 2(1+\alpha_3)B^3(A')\\
\qquad\quad{}
+
(3\alpha_3-\alpha_4)B^4(A')\big] -15+6p+3\alpha_3(4-p)+\alpha_4(3p-8),
\end{gather*}
where $\beta_p=6-3p$, $\beta'_p=8-3p$, $A$ and $A'$ refer to the subgraphs
related by bijection between spanning c-subgraphs of $\{A \Subset \mathcal{G}_{\mathfrak{f}^0};e\in A\}$ and $\{A'\Subset \mathcal{G}_{\mathfrak{f}^0}/e\}$.
Finally, one gets $(y-1)z^{-15+6p+3\alpha_3(4-p)+\alpha_4(3p-8)}\mathfrak{T}_{ \mathcal{G}_{\mathfrak{f}^0}/e;(\alpha_3,\alpha_4)}$. As previously shows, this result is independent of the representative.
\end{proof}

Let us come back on the expression of our polynomial to make a comparison with recent results introduced in~\cite{Tanasa:2010me}. One variable (the variable~$s$) stands for the number of connected components of the boundary graph of a~spanning c-subgraph. We replace this number by~$\sum_{{\mathbf{b}}^3} C_{\partial}({\mathbf{b}}^3)$. With the change of variables $s=z^{-1}$, we directly recover the sum of genera as introduced by Tanasa~\cite{Tanasa:2010me} on some special closed tensor graph (with, necessarily, all 3-bubbles closed). However, there is a relationship between the number of boundary components of a~graph~$\mathcal{G}$ denoted $C_{\partial}(\mathcal{G})$ and the sum $\sum_{{\mathbf{b}}^3} C_{\partial}({\mathbf{b}}^3)$. This certainly needs to be investigated in the context of w-colored stranded graphs.

\section[The polynomial invariant for $nD$ w-colored graphs]{The polynomial invariant for $\boldsymbol{nD}$ w-colored graphs}
\label{subset:polymtn}

Having identif\/ied a rank 4 invariant polynomial, we can quickly generalize it in any rank starting by extending the previous results.

\begin{Lemma}\label{lemma:bulbesn}
Consider a representative $\mathcal{G}_{\mathfrak{f}^0}$ of a rank $n$ w-colored graph $\mathfrak{G}$. $\forall$ $3\leq p\leq d\leq n$,
\begin{gather} \label{eq:vefsumn}
\sum_{{\mathbf{b}}^d \in \mathcal{B}^d} V_{{\mathbf{b}}^d} \geq \complement_{n}^{d-1}V, \qquad \sum_{{\mathbf{b}}^d \in \mathcal{B}^d} E_{{\mathbf{b}}^d} = \complement_{n}^{d-1}E, \qquad \sum_{{\mathbf{b}}^d\in \mathcal{B}^d} F_{{\rm int};{\mathbf{b}}^d} = \complement_{n-1}^{d-2}F_{{\rm int}},
\\
 \label{eq:sumdoublen}
\sum_{{\mathbf{b}}^d \in \mathcal{B}^d} B^p_{{\mathbf{b}}^d}=\complement_{n-p+1}^{d-p}B^p, \qquad \complement_{d-1}^{p-1}B^d\leq \complement_{n-p+1}^{d-p}B^p.
\end{gather}
\end{Lemma}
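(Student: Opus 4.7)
The plan is to exploit the fact that every $p$-bubble in a rank $n$ w-colored graph is indexed by a choice of $p$ colors out of the $n+1$ available colors together with the connected-component structure of the subgraph induced by those colors. This reduces all five statements to double-counting exercises over color subsets, in direct analogy with the proof of Lemma~\ref{lemma:bulbes4}.

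First I would establish the identities for edges, internal faces, and $p$-bubbles. An edge of $\mathcal{G}_{\mathfrak{f}^0}$ carries a single color $c$, and so it contributes to exactly the $d$-bubbles whose color set contains $c$; there are $\complement_{n}^{d-1}$ such $d$-color subsets (pick the other $d-1$ colors from the remaining $n$), and for each of them the edge lies in a unique connected $d$-bubble. Summing over edges gives $\sum_{\mathbf{b}^d}E_{\mathbf{b}^d}=\complement_{n}^{d-1}E$. The same idea, with an unordered pair of colors $\{c_1,c_2\}$ labelling an internal face, yields $\sum_{\mathbf{b}^d}F_{{\rm int};\mathbf{b}^d}=\complement_{n-1}^{d-2}F_{{\rm int}}$, since a $d$-color subset containing both $c_1,c_2$ requires $d-2$ further colors from the remaining $n-1$. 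For a $p$-bubble, whose color set has size $p$, the number of containing $d$-color subsets is $\complement_{(n+1)-p}^{d-p}=\complement_{n-p+1}^{d-p}$, producing the first identity in \eqref{eq:sumdoublen}.

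For the vertex inequality I would follow the template of Lemma~\ref{lemma:bulbes4}: a vertex $v$ of $\mathcal{G}_{\mathfrak{f}^0}$, being a chord diagram of rank $n$, decomposes under restriction to a $d$-color subset into one or more chord sub-diagrams, each of which is a vertex of the corresponding $d$-bubble. Running over all $d$-color subsets and over all admissible vertex shapes, the minimum is attained by the simplest tensor-type vertex (the natural rank-$n$ analogue of $\mathcal{G}_{1,\mathfrak{f}^0(\mathcal{G}_1)}$ in Fig.~\ref{fig:npb}), and the count at that vertex equals $\complement_{n}^{d-1}$; summing over vertices yields $\sum_{\mathbf{b}^d}V_{\mathbf{b}^d}\geq \complement_{n}^{d-1}V$. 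Finally, for the inequality in \eqref{eq:sumdoublen}, one observes that every $d$-bubble contains at least $\complement_{d-1}^{p-1}$ distinct $p$-bubbles: fix one of the $d$ colors and count the $p$-color subsets of the $d$ colors containing it; each such subset contributes at least one connected $p$-bubble inside the $d$-bubble. Summing this lower bound over $\mathcal{B}^d$ and combining with the identity $\sum_{\mathbf{b}^d}B^p_{\mathbf{b}^d}=\complement_{n-p+1}^{d-p}B^p$ just proved gives $\complement_{d-1}^{p-1}B^d\leq \complement_{n-p+1}^{d-p}B^p$.

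The part I expect to require the most care is the vertex inequality, because it is the only step that is not a transparent double-count on color subsets; one has to verify that the minimum decomposition number of a rank-$n$ w-colored stranded vertex under a $d$-color restriction is exactly $\complement_{n}^{d-1}$, and that this minimum is simultaneously attained by some admissible vertex shape so that the bound is meaningful (and not vacuous). The remaining identities and the closing inequality follow by clean color-subset combinatorics together with the connectedness built into the definition of a bubble.
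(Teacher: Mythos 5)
Your proposal is correct and follows essentially the same route as the paper's own proof: the three identities and the first equation of \eqref{eq:sumdoublen} are obtained by the same double count over $d$-element color subsets containing the one, two, or $p$ colors attached to an edge, internal face, or $p$-bubble, and the two inequalities come from the same minimal-vertex count $\complement_{n}^{d-1}$ (modeled on the simplest vertex of Fig.~\ref{fig:npbn}) and the bound $B^p_{{\mathbf{b}}^d}\geq \complement_{d-1}^{p-1}$ summed over $\mathcal{B}^d$. Your explicit justification of the latter bound by fixing a color and counting $p$-subsets of the $d$ colors through it only makes precise what the paper asserts from the coloring.
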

\begin{proof}
Consider a vertex $v$ of a representative $\mathcal{G}_{\mathfrak{f}^0}$ of a rank $n$ w-colored graph; $v$ can be decomposed, at least, in $\complement_{n}^{d-1}$ ($d\leq n$) vertices which could belong to a $d$-bubble (this number is the minimum given by the simplest vertex of the graph $ \mathcal{G}_{1,\mathfrak{f}^0(\mathcal{G}_1)}$ in Fig.~\ref{fig:npbn}). We then prove the f\/irst relation in~\eqref{eq:vefsumn}. From the coloring, each edge of $ \mathcal{G}_{\mathfrak{f}^0}$ decomposes into $\complement_{n}^{d-1}$ edges belonging to a~$d$-bubble. Each internal face of $ \mathcal{G}_{\mathfrak{f}^0}$ is shared by $\complement_{n-1}^{d-2}$ $d$-bubbles. Thus we have the remaining equations in~\eqref{eq:vefsumn}.

Let us prove \eqref{eq:sumdoublen}. Consider the $d$-bubbles in $\mathcal{G}_{\mathfrak{f}^0}$. Each $p$-bubble ${\mathbf{b}}^p$ ($p\leq d$) in $\mathcal{G}_{\mathfrak{f}^0}$, is a $p$-bubble of $\complement_{n-p+1}^{d-p}$ $d$-bubbles of $ \mathcal{G}_{\mathfrak{f}^0}$.
Indeed, consider a $p$-bubble in $ \mathcal{G}_{\mathfrak{f}^0}$, it remains $n-p+1$ colors from which we can take arbitrary $d-p$ colors to obtain a $d$-bubble. The f\/irst equation in~\eqref{eq:sumdoublen} follows.
Using once again the coloring, each $d$-bubble has at least $ \complement_{d-1}^{p-1}$ $p$-bubbles, that is $B^p_{{\mathbf{b}}^d}\geq \complement_{d-1}^{p-1}$. We can now sum each member of this relation on the total number of $d$-bubbles and complete the proof of~\eqref{eq:sumdoublen}.
\end{proof}

\begin{Proposition}
Let $ \mathcal{G}_{\mathfrak{f}^0}$ be any representative of a~rank $n\geq 4$ w-colored graph. There exist some positive rational numbers $\boldsymbol{\alpha}=\{\alpha_k\}_{k=3, \dots, n}$ such that
\begin{gather*} 
\gamma_{n;\boldsymbol{\alpha}}( \mathcal{G}_{\mathfrak{f}^0})=\frac{n(n-1)}{2}(V-E) + (n-1)F_{{\rm int}} - (2+(n-2)\alpha_3)B^3 \\
\hphantom{\gamma_{n;\boldsymbol{\alpha}}( \mathcal{G}_{\mathfrak{f}^0})=}{} + \sum_{k=4}^n \big[(k-1)\alpha_{k-1}-(n-k+1)\alpha_k\big]B^k
\end{gather*}
is a negative integer.
\end{Proposition}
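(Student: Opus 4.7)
The plan is to generalize the proof of Proposition~\ref{prop:euler4}, replacing Lemma~\ref{lemma:bulbes4} by Lemma~\ref{lemma:bulbesn}. Since every $3$-bubble ${\mathbf{b}}^3$ of $\mathcal{G}_{\mathfrak{f}^0}$ is a ribbon graph with half-edges, Euler's formula~\eqref{eq:forran2} yields $V_{{\mathbf{b}}^3} - E_{{\mathbf{b}}^3} + F_{{\rm int};{\mathbf{b}}^3} \leq 2$. Summing over all $3$-bubbles and substituting the three identities from~\eqref{eq:vefsumn} specialised to $d=3$ produces
\begin{gather*}
\frac{n(n-1)}{2}(V-E) + (n-1)F_{{\rm int}} - 2B^3 \leq 0.
\end{gather*}

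Next, I would apply the bubble-containment inequality in~\eqref{eq:sumdoublen} with $d=k+1$ and $p=k$ to obtain $k\,B^{k+1} - (n-k+1)B^k \leq 0$ for each $k=3,\dots,n-1$. Multiplying the $k$-th such inequality by a positive rational $\alpha_k$ and adding all $n-3$ of them to the inequality above, a direct bookkeeping shows that the coefficient of $B^3$ becomes $-(2+(n-2)\alpha_3)$, that of $B^j$ for $4\leq j\leq n-1$ becomes $(j-1)\alpha_{j-1} - (n-j+1)\alpha_j$, and that of $B^n$ becomes $(n-1)\alpha_{n-1}$. Since $\gamma_{n;\boldsymbol{\alpha}}$ differs from the resulting linear form only by the non-positive term $-\alpha_n B^n$, I conclude $\gamma_{n;\boldsymbol{\alpha}}(\mathcal{G}_{\mathfrak{f}^0}) \leq 0$.

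Integrality of $\gamma_{n;\boldsymbol{\alpha}}$ is ensured by taking the $\alpha_k$ to be rationals for which every coefficient above is integral (for instance, $\alpha_k\in\mathbb{N}$). To upgrade non-positivity to strict negativity, I would mimic the role played by the bound $B^4 \geq 4$ in Proposition~\ref{prop:euler4}: a universal lower bound $B^n \geq c_n \geq 1$, read off from the minimal coloured-vertex configuration of the type $\mathcal{G}_{1,\mathfrak{f}^0(\mathcal{G}_1)}$ illustrated in Fig.~\ref{fig:npb}, immediately promotes the previous estimate to $\gamma_{n;\boldsymbol{\alpha}} \leq -c_n\alpha_n < 0$.

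The main obstacle is precisely the production of this standalone positive lower bound on $B^n$: Lemma~\ref{lemma:bulbesn} only relates bubble counts at adjacent ranks and does not supply a free lower bound at the top rank. One must extract such a bound directly from the combinatorics of the simplest rank-$n$ coloured vertex and then verify that it is preserved under edge contractions and passage to the trivial-disc equivalence class. Everything else amounts to a coefficient-matching exercise driven by the two identities in Lemma~\ref{lemma:bulbesn}.
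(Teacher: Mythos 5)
Your proposal follows the paper's proof essentially verbatim: the Euler inequality on $3$-bubbles combined with Lemma~\ref{lemma:bulbesn}, the adjacent-rank inequalities $kB^{k+1}\leq (n-k+1)B^k$ weighted by positive rationals $\alpha_k$, and a telescoping sum that reproduces exactly the coefficients of $\gamma_{n;\boldsymbol{\alpha}}$, with the constant $n\alpha_n$ supplying strict negativity. The ``missing'' lower bound you flag is precisely the inequality $B^n\geq n$ that the paper invokes (the rank-$n$ analogue of $B^4\geq 4$ in Lemma~\ref{lemma:bulbes4}) while attributing it to Lemma~\ref{lemma:bulbesn}, where it is not explicitly stated; so your concern correctly identifies a small gap that is present in the paper's own argument as well, and any positive lower bound on $B^n$ closes it in the way you describe.
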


\begin{proof} Consider a representative of a rank $n$ w-colored graph. The $3$-bubbles of $\mathcal{G}_{\mathfrak{f}^0}$ are ribbon graphs. Each of them satisf\/ies \eqref{eq:forran2} and the sum over all the $3$-bubbles of $ \mathcal{G}_{\mathfrak{f}^0}$ gives
\begin{gather*}
\sum_{{\mathbf{b}}^3 \in \mathcal{B}^3}(V_{{\mathbf{b}}^3}-E_{{\mathbf{b}}^3})+ \sum_{{\mathbf{b}}^3 \in \mathcal{B}^3}F_{{\rm int};{\mathbf{b}}^3} -2B^3\leq 0.
\end{gather*}
From Lemma \ref{lemma:bulbesn}, we have
\begin{gather}
\frac{n(n-1)}{2}(V-E) + (n-1)F_{{\rm int}} - 2B^3\leq 0,\nonumber\\
kB^{k+1} - (n-k+1)B^k \leq 0, \qquad \forall\, k=3,\dots, n-1, \qquad -B^n+n\leq0.\label{eq:rrannpp}
\end{gather}
We multiply the second relation in \eqref{eq:rrannpp} by a positive integer $\alpha_k$, $k=3,\dots,n-1$ and the last relation by a positive integer $\alpha_n$ such that the quantity
\begin{gather*}
- (2+(n-2)\alpha_3)B^3 + \sum_{k=4}^n \big[(k-1)\alpha_{k-1}-(n-k+1)\alpha_k\big]B^k
\end{gather*}
 is an integer. We obtain
\begin{gather} \label{eq:summa}
 k\alpha_kB^{k+1} - (n-k+1)\alpha_kB^k \leq 0, \qquad \forall\, k=3,\dots, n-1, \qquad -\alpha_nB^n+n\alpha_n\leq0.
\end{gather}
A summation from $k=3$ to $k=n-1$ of the f\/irst relation in~\eqref{eq:summa} and then adding the result to the second inequality leads to
\begin{gather}
\label{eq:summa2}
\sum_{k=3}^{n-1}\big[k\alpha_kB^{k+1} - (n-k+1)\alpha_kB^k\big]-\alpha_{n}B^{n}+n\alpha_{n}\leq 0.
\end{gather}
Adding relation \eqref{eq:summa2} to the f\/irst relation in~\eqref{eq:rrannpp} achieves the proof.
\end{proof}

One may wonder why the number $B^2$ is not used in the above summation. $B^2$ denotes the number of 2-bubbles which is equal to $F_{{\rm int}}+C_\partial$. One notes that the quantity~$C_\partial$ is not used in the invariant, we have introduced a dif\/ferent variable for internal faces and boundary components.

The number $\gamma_{n;\boldsymbol{\alpha}}(\cdot)$ is a generalized and weighted Euler characteristics. If one envisages to have an alternating sum of positive integers, we must require the following conditions:
\begin{gather*}
 (k-1)\alpha_{k-1}-(n-k+1)\alpha_k>0 \qquad \mbox{if $k$ is even},\\
(k-1)\alpha_{k-1}-(n-k+1)\alpha_k<0 \qquad \mbox{if $k$ is odd}.
\end{gather*}
This system which is triangular can be clearly solved and generally admits several solutions.

It is instructive to investigate how the above invariant $\gamma_{n;\boldsymbol{\alpha}}(\cdot)$ can be prolonged to the lowest ranks, $n=2$ and $n=3$. Consider a representative $ \mathcal{G}_{\mathfrak{f}^0}$ of rank $n$ w-colored stranded graphs~$\mathfrak{G}$.

For $n=2$, $ \mathcal{G}_{\mathfrak{f}^0}$ is a half-edged ribbon graph. Consider the underlying ribbon graph $\mathcal{G}$, or ribbon graph in the sense of Bollob\'as and Riordan, the invariant used is $2k(A)-(V(A)-E(A)+F(A))$ for each $A\Subset \mathcal{G}$. In the case of half-edged ribbon graphs, the invariant is $2k(A)-(V(A)-E(A)+F_{{\rm int}}(A))$ for each $A\Subset \mathcal{G}_{\mathfrak{f}^0}$. Since this quantity is always non negative, we have for a connected graph~$ \mathcal{G}_{\mathfrak{f}^0}$,
\begin{gather} \label{eq:rran2}
V( \mathcal{G}_{\mathfrak{f}^0})-E( \mathcal{G}_{\mathfrak{f}^0})+ F_{{\rm int}}(\mathcal{G}_{\mathfrak{f}^0})\leq 2,
\end{gather}
where $V$, $E$ and $F_{{\rm int}}$ are, respectively, the number of vertices, edges and internal faces. We set
\begin{gather*}
\gamma_2( \mathcal{G}_{\mathfrak{f}^0})=V-E+F_{{\rm int}}.
\end{gather*}
This is precisely the invariant used in the def\/inition of the BR polynomial on ribbon graphs with half-edges~\cite{remia}.

For $n=3$, the $3$-bubbles of $ \mathcal{G}_{\mathfrak{f}^0}$ are connected rank~$2$ or ribbon graphs. Each of them satisf\/ies~\eqref{eq:rran2} and a~summation on all the $3$-bubbles of $ \mathcal{G}_{\mathfrak{f}^0}$ gives
\begin{gather*}
\sum_{{\mathbf{b}}^3 \in \mathcal{B}^3}(V_{{\mathbf{b}}^3}-E_{{\mathbf{b}}^3})+ \sum_{{\mathbf{b}}^3 \in \mathcal{B}^3}F_{{\rm int};{\mathbf{b}}^3} -2B^3\leq 0.
\end{gather*}
Using now Lemma~\ref{lemma:bulbesn}, we have
\begin{gather} \label{eq:rrank3}
3(V-E) + 2F_{{\rm int}} - 2B^3\leq 0, \qquad -B^3+3\leq 0.
\end{gather}
Consider a positive rational number $\alpha_3$ such that $(2+\alpha_3)B^3$ is an integer. We multiply the second relation in~\eqref{eq:rrank3} by $\alpha_3$ and obtain $-\alpha_3B^3+3\alpha_3\leq0$. We add this relation to the f\/irst relation in~\eqref{eq:rrank3} and get
\begin{gather} \label{eq:rran3}
3(V-E) + 2F_{{\rm int}} - (2+\alpha_3)B^3\leq-3\alpha_3.
\end{gather}
We can then set
\begin{gather*}
\gamma_{3;\alpha_3}(\mathcal{G})=3(V-E) + 2F_{{\rm int}}- (2+\alpha_3)B^3.
\end{gather*}
The expression $\gamma_{3;0}(\cdot)$ is the invariant introduced on rank~3 w-colored stranded graphs in~\cite{remia}. Hence, the present framework fully extends that work to a~one-parameter family of invariant polynomials.

\begin{Definition}[topological invariant for rank $n$ w-colored
graph]
Let $\mathfrak{G}(\mathcal{V},\mathcal{E},\mathfrak{f}^0)$ be a rank~$n$ w-colored graph and $\boldsymbol{\alpha}=\{\alpha_k\}_{k=3,\dots, n}$ some positive rational numbers.
The generalized topological invariant associated with~$\mathfrak{G}$
is given by the following function associated with any of its
representatives~$\mathcal{G}_{\mathfrak{f}^0}$.
\begin{gather}
\mathfrak{T}_{\mathfrak{G};\boldsymbol{\alpha}}(x,y,z,s,w,q,t) =\mathfrak{T}_{\mathcal{G}_{\mathfrak{f}^0};\boldsymbol{\alpha}}(x,y,z,s,w,q,t)\nonumber\\
= \sum_{A \Subset \mathcal{G}_{\mathfrak{f}^0}}
 (x-1)^{\rk( \mathcal{G}_{\mathfrak{f}^0})-\rk(A)}(y-1)^{n(A)}
z^{\frac{(n-1)(n+2)}{2}k(A)-\gamma_{n;\boldsymbol{\alpha}}(A)}s^{C_\partial(A)} w^{F_{\partial}(A)} q^{E_{\partial}(A)} t^{f(A)},
\label{ttopflan}
\end{gather}
with
\begin{gather*}
\gamma_{n;\boldsymbol{\alpha}}(A)=\frac{n(n-1)}{2}(V(A)-E(A)) + (n-1)F_{{\rm int}}(A) - (2+(n-2)\alpha_3)B^3(A)\\
\hphantom{\gamma_{n;\boldsymbol{\alpha}}(A)=}{} + \sum_{k=4}^n \big[(k-1)\alpha_{k-1}-(n-k+1)\alpha_k\big]B^k(A)
\end{gather*}
a negative integer.
\end{Definition}

Remark that Lemma~\ref{lemma:bulbes4} f\/inds an extension for any rank $n$ w-colored graph.

\begin{figure}[h]
 \centering

\includegraphics[angle=0, width=6cm, height=1cm]{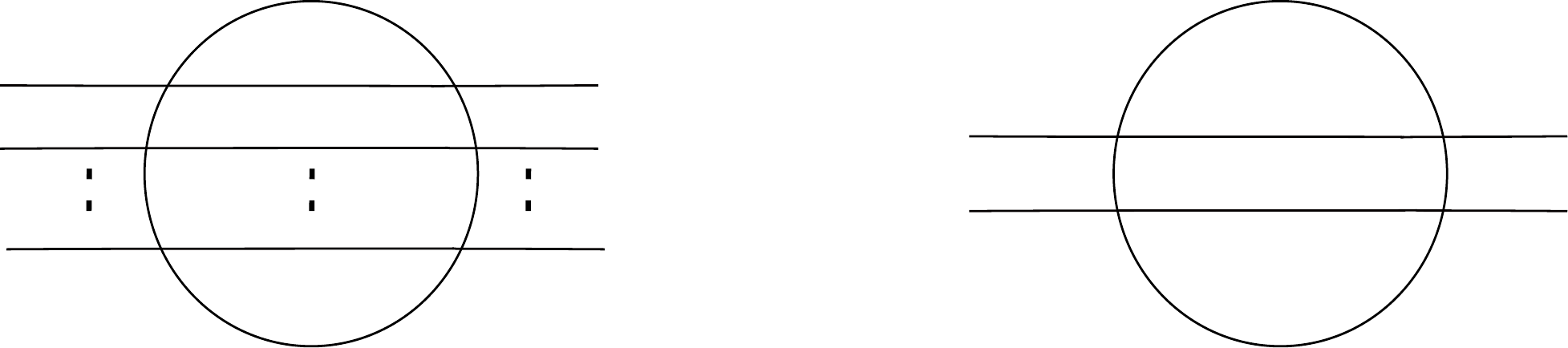}
\put(-152,-10){$ \mathcal{G}_{1,\mathfrak{f}^0(\mathcal{G}_1)}$}
\put(-101,10){$0$}
\put(-178,10){$\bar 0$}
\put(-73,10){$\bar 0$}
\put(-33,-10){${\mathbf{b}}$}
\put(5,10){$0$}

\caption{A 3-bubble ${\mathbf{b}}$ of the graph $\mathcal{G}_{1,\mathfrak{f}^0(\mathcal{G}_1)}$ made with $n$ strands.} \label{fig:npbn}
\end{figure}

Let us introduce the following lemma used in the proof of our main result.

\begin{Lemma}[cut/contraction of special edges]\label{lem:cutbrinn}
Let $ \mathcal{G}_{\mathfrak{f}^0}$ be a representative of~$\mathfrak{G}$ a rank n w-colored graph
and $e$ an edge in $\mathcal{G}_{\mathfrak{f}^0}$. Then, if $e$ is a bridge in the above notations, we have
\begin{gather}
k(\mathcal{G}_{\mathfrak{f}^0}\vee e) = k( \mathcal{G}_{\mathfrak{f}^0}/e)+1,\qquad
V(\mathcal{G}_{\mathfrak{f}^0}\vee e) = V(\mathcal{G}_{\mathfrak{f}^0}/e) + 1,\qquad
E( \mathcal{G}_{\mathfrak{f}^0}\vee e) = E( \mathcal{G}_{\mathfrak{f}^0}/e),\nonumber\\
f(\mathcal{G}_{\mathfrak{f}^0} \vee e)= f( \mathcal{G}_{\mathfrak{f}^0}/e) +2, \label{eq:kven} \\
F_{{\rm int}}(\mathcal{G}_{\mathfrak{f}^0}\vee e)= F_{{\rm int}}( \mathcal{G}_{\mathfrak{f}^0}/e), \qquad
B^p_{{\rm int}}(\mathcal{G}_{\mathfrak{f}^0} \vee e)=B^p_{{\rm int}}( \mathcal{G}_{\mathfrak{f}^0}/e), \qquad
\forall\, 3\leq p\leq n,\label{eq:fintn} \\
C_\partial( \mathcal{G}_{\mathfrak{f}^0} \vee e) = C_\partial( \mathcal{G}_{\mathfrak{f}^0}/e) +1,
\qquad
E_\partial( \mathcal{G}_{\mathfrak{f}^0} \vee e) = E_\partial ( \mathcal{G}_{\mathfrak{f}^0}/e) +n,\nonumber\\
F_\partial( \mathcal{G}_{\mathfrak{f}^0} \vee e) = F_\partial(\mathcal{G}_{\mathfrak{f}^0}/e) + \complement_n^2,\label{eq:cextbordn} \\
 B^p_{{\rm ext}}(\mathcal{G}_{\mathfrak{f}^0}\vee e) = B^p_{{\rm ext}}(\mathcal{G}_{\mathfrak{f}^0}/e) + \complement_n^{n-p+1},\qquad
 \forall\, 3\leq p\leq n.
\label{eq:bubextn}
\end{gather}
\end{Lemma}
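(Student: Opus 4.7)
The plan is to mirror the rank 4 proof (Lemma~\ref{lem:cutbri}) step by step, replacing the binomial coefficients specific to $D=4$ by their general-rank analogues, and to leverage the bubble-color bookkeeping already developed in Lemma~\ref{lemma:bulbesn}. Throughout, one uses that a rank $n$ edge carries $n$ strands and that an edge of color $c\in\{0,\dots,n\}$ lies in a $p$-bubble iff the $p$-coloring of the bubble contains $c$.

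For \eqref{eq:kven}, each equality is immediate from Definitions~\ref{def:cutedtens} and~\ref{def:constran}: cutting the bridge disconnects the graph (adding one connected component and two new half-edges) while contraction identifies the two end-vertices; in both operations the edge $e$ disappears from the edge count.

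For \eqref{eq:fintn} I would invoke the natural rank $n$ extension of \cite[Lemma~7]{remia}, asserting that every face of $\mathcal{G}_{\mathfrak{f}^0}$ meeting a bridge $e$ is open and lies in a single connected component of $\partial\mathcal{G}_{\mathfrak{f}^0}$. This implies that no internal face is destroyed by either cut or contraction; moreover, since internal $p$-bubbles are built entirely from internal faces on color-subsets that the bridge strands do not breach, $B^p_{\mathrm{int}}$ is preserved as well. For \eqref{eq:cextbordn}, the splitting of the unique boundary component containing the bridge's open faces yields the $+1$ in $C_\partial$; each of the $n$ strands of $e$ is cut into two open segments, producing $n$ new external edges in the boundary (so $E_\partial$ grows by $n$); and each unordered pair of strands at the two resulting half-edges contributes one new boundary $2$-cell, giving the increment $\complement_n^2$ for $F_\partial$. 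Finally, \eqref{eq:bubextn} is a straightforward color count: the number of $p$-bubbles containing the fixed color of $e$ is $\complement_n^{p-1}=\complement_n^{n-p+1}$, and each such bubble is split in two by the cut (just as in the ribbon-graph case), while bubbles not meeting $e$ are unaffected.

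The main obstacle, to my mind, is not the arithmetic but the structural input extending \cite[Lemma~7]{remia} to arbitrary rank: one must justify carefully that a bridge in a rank $n$ w-colored graph forces all faces through it to be external and to lie in the same component of $\partial\mathcal{G}_{\mathfrak{f}^0}$, and that the $\complement_n^{p-1}$ external $p$-bubbles traversing $e$ really do each split into exactly two distinct components after the cut (rather than one or more remaining intact via an alternative color path). Once these two structural facts are established, the relations \eqref{eq:fintn}--\eqref{eq:bubextn} reduce to elementary bookkeeping via the color counts of Lemma~\ref{lemma:bulbesn}.
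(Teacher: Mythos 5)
Your proposal matches the paper's own proof essentially step for step: both treat \eqref{eq:kven} as immediate from the definitions, invoke the rank-$n$ extension of Lemma~7 of the rank~3 paper to get that all faces through a bridge are open and lie in one boundary component (hence \eqref{eq:fintn} and the $+1$ in $C_\partial$), and obtain \eqref{eq:cextbordn}--\eqref{eq:bubextn} by the same color counts ($n$ split external strands, $\complement_n^2$ boundary faces, and $\complement_n^{n-p+1}$ external $p$-bubbles through $e$, each splitting in two). The structural caveat you raise is real but is treated in the paper exactly as you propose, namely by citing the lower-rank lemma and asserting the splitting.
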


\begin{proof} This is a simple extension of Lemma~\ref{lem:cutbri}. We f\/irst concentrate on the case of the bridge.
In~\eqref{eq:kven}, the equations can be easily found.
We now investigate~\eqref{eq:fintn}. Notice that
the faces passing through $e$ are necessarily open by~\cite[Lemma~7]{remia}. All closed faces on each side of the bridge are preserved after cutting $e$ and are still preserved after
edge contraction. Hence $F_{{\rm int}}(\mathcal{G}_{\mathfrak{f}^0}\vee e) = F_{{\rm int}}( \mathcal{G}_{\mathfrak{f}^0}/e)$,
$B^p_{{\rm int}}( \mathcal{G}_{\mathfrak{f}^0} \vee e)=B^p_{{\rm int}}( \mathcal{G}_{\mathfrak{f}^0}/e)$ $\forall$ $3\leq p\leq n $.
Let us focus on~\eqref{eq:cextbordn} and use the fact that the $n$ external faces belong to the same boundary component. By cutting $e$, this unique component yields two boundary components. It is direct to get
$C_\partial( \mathcal{G}_{\mathfrak{f}^0} \vee e) = C_\partial( \mathcal{G}_{\mathfrak{f}^0}/e) +1$,
$E_\partial(\mathcal{G}_{\mathfrak{f}^0} \vee e)= E_\partial (\mathcal{G}_{\mathfrak{f}^0}/e)+n$
(the cut of $e$ divides each external face into two dif\/ferent
external strands) and $F_\partial( \mathcal{G}_{\mathfrak{f}^0} \vee e)= F_\partial ( \mathcal{G}_{\mathfrak{f}^0}/e)+\complement_n^2$
since $C_\partial( \mathcal{G}_{\mathfrak{f}^0}/e)=C_\partial( \mathcal{G}_{\mathfrak{f}^0})$,
$E_\partial ( \mathcal{G}_{\mathfrak{f}^0}/e)=E_\partial ( \mathcal{G}_{\mathfrak{f}^0})$
and $F_\partial( \mathcal{G}_{\mathfrak{f}^0}G /e)= F_\partial ( \mathcal{G}_{\mathfrak{f}^0})$
 from Lemma~\ref{lem:fullcont}.
For the number of external bubbles,
there are $\complement_n^{n-p+1}$ $p$-bubbles in $ \mathcal{G}_{\mathfrak{f}^0}$ passing through the
bridge. These bubbles are clearly in~$ \mathcal{G}_{\mathfrak{f}^0}/e$ and, cutting the bridge,
each of these bubbles splits in two. This yields~\eqref{eq:bubextn}.
\end{proof}

We can now introduce our main result of this section.

\begin{Theorem}[contraction/cut rule for w-colored graphs]
\label{theo:contensn}
Let $\mathfrak{G}$ be a rank $n$ w-colored graph and and $\boldsymbol{\alpha}=\{\alpha_k\}_{k=3,\dots, n}$ some positive rational numbers. Then, for a regular edge $e$ of any of the representative $ \mathcal{G}_{\mathfrak{f}^0}$ of $\mathfrak{G}$, we have
\begin{gather}
\mathfrak{T}_{ \mathcal{G}_{\mathfrak{f}^0};\boldsymbol{\alpha}}=\mathfrak{T}_{ \mathcal{G}_{\mathfrak{f}^0}\vee e;\boldsymbol{\alpha}} +\mathfrak{T}_{ \mathcal{G}_{\mathfrak{f}^0}/e;\boldsymbol{\alpha}}.
\label{tenscondeln}
\end{gather}
For a bridge $e$, we have
\begin{gather*}
\mathfrak{T}_{\mathcal{G}_{\mathfrak{f}^0} \vee e;\boldsymbol{\alpha}}= z^{(n-1)(n+1)+n\alpha_n}sw^{\frac{n(n-1)}{2}}q^nt^2 \mathfrak{T}_{\mathcal{G}_{\mathfrak{f}^0}/e;\boldsymbol{\alpha}}
\end{gather*}
and
\begin{gather*}
\mathfrak{T}_{ \mathcal{G}_{\mathfrak{f}^0};\boldsymbol{\alpha}} =\big[(x-1)z^{(n-1)(n+1)+n\alpha_n}sw^{\frac{n(n-1)}{2}}q^nt^2+1\big] \mathfrak{T}_{ \mathcal{G}_{\mathfrak{f}^0}/e;\boldsymbol{\alpha}}.
\end{gather*}
\end{Theorem}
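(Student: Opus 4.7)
The plan is to mirror the proof of Theorem~\ref{theo:contens4}, lifting each step to arbitrary rank $n$ by invoking Lemma~\ref{lem:cutbrinn} in place of Lemma~\ref{lem:cutbri}. As in the rank~4 argument, the representative independence follows at once from Proposition~\ref{lem:cutmg}, so I would work with a fixed representative $\mathcal{G}_{\mathfrak{f}^0}$ throughout.

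For the regular edge case~\eqref{tenscondeln}, I would partition the sum defining $\mathfrak{T}_{\mathcal{G}_{\mathfrak{f}^0};\boldsymbol{\alpha}}$ according to whether $e\in A$ or not. Spanning c-subgraphs $A\Subset\mathcal{G}_{\mathfrak{f}^0}$ with $e\notin A$ are in natural bijection with the spanning c-subgraphs of $\mathcal{G}_{\mathfrak{f}^0}\vee e$, and all enumerative quantities appearing in the monomial (rank, nullity, $F_{\mathrm{int}}$, $B^p_{\mathrm{int}}$, $B^p_{\mathrm{ext}}$, boundary data, $f$) are preserved. For $e\in A$, the map $A\mapsto A/e$ sends them bijectively to spanning c-subgraphs of $\mathcal{G}_{\mathfrak{f}^0}/e$; since $e$ is regular, its strands are preserved by contraction, no $p$-bubble is created or destroyed, and the boundary is unchanged by Lemma~\ref{lem:fullcont}. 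Summing yields the contraction/cut identity.

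For the bridge case, I would use the bijection between $\{A\Subset\mathcal{G}_{\mathfrak{f}^0}:e\in A\}$, the spanning c-subgraphs of $\mathcal{G}_{\mathfrak{f}^0}/e$, and (via $A\mapsto A\vee e$) those of $\mathcal{G}_{\mathfrak{f}^0}\vee e$. Applying Lemma~\ref{lem:cutbrinn} subgraph by subgraph, the total $z$-exponent shift between a c-subgraph of $\mathcal{G}_{\mathfrak{f}^0}\vee e$ and the corresponding one in $\mathcal{G}_{\mathfrak{f}^0}/e$ reads
\begin{gather*}
\tfrac{(n-1)(n+2)}{2}\cdot 1 - \bigl[\tfrac{n(n-1)}{2}\cdot 1 - (2+(n-2)\alpha_3)\complement_n^{n-2} \\
+ \textstyle\sum_{k=4}^n\bigl[(k-1)\alpha_{k-1}-(n-k+1)\alpha_k\bigr]\complement_n^{n-k+1}\bigr],
\end{gather*}
while the other variables pick up the factors $s^{1}$, $w^{\complement_n^2}$, $q^{n}$, $t^{2}$ directly from \eqref{eq:cextbordn}--\eqref{eq:bubextn}. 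The core combinatorial step is then to show this $z$-exponent collapses to $(n-1)(n+1)+n\alpha_n$. The $\alpha$-free part is $(n-1)+2\complement_n^{n-2}=(n-1)+n(n-1)=n^2-1$. For each $k$ with $3\leq k\leq n-1$, the coefficient of $\alpha_k$ telescopes via the identity $k\,\complement_n^{n-k}=(n-k+1)\,\complement_n^{n-k+1}=\tfrac{n!}{(n-k)!(k-1)!}$, so only the endpoint $k=n$ survives, contributing $n\alpha_n$. The rank~4 check of Theorem~\ref{theo:contens4} provides a useful sanity test, giving $15+4\alpha_4$.

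The second bridge identity follows by splitting the sum over $A\Subset\mathcal{G}_{\mathfrak{f}^0}$ one more time: the subgraphs not containing $e$ yield the factor $(x-1)\mathfrak{T}_{\mathcal{G}_{\mathfrak{f}^0}\vee e;\boldsymbol{\alpha}}$, and combining with the already-established first bridge identity gives the bracketed prefactor. The main obstacle is the telescoping identity on the $\alpha_k$'s; everything else is routine bookkeeping parallel to the rank~4 proof. I expect no conceptual surprises beyond the care needed in manipulating the binomial coefficients $\complement_n^{n-k+1}$ simultaneously across all bubble dimensions.
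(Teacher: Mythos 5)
Your proposal is correct and follows essentially the same route as the paper, which simply repeats the rank-4 argument with Lemma~\ref{lem:cutbrinn} in place of Lemma~\ref{lem:cutbri}. In fact you supply the one computation the paper leaves implicit: the telescoping identity $k\,\complement_n^{n-k}=(n-k+1)\,\complement_n^{n-k+1}$ that kills all $\alpha_k$ for $k<n$ and leaves the exponent $(n-1)(n+1)+n\alpha_n$, consistent with the rank-4 value $15+4\alpha_4$.
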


This proof is an extended form of the proof given in Theorem~\ref{theo:contens4} but with a more larger number of bubbles. We see that in addition to the (3,4)-bubbles, we have also some $i$-bubbles ($i\geq 4$).

\begin{proof}
Consider a representative $ \mathcal{G}_{\mathfrak{f}^0}$ of a rank $n$ w-colored graph $\mathfrak{G}$. Following the proof of~\eqref{tenscondel} in Theorem~\ref{theo:contens4} and replacing $(3,4)$-bubbles by $i$-bubbles ($i\geq 3$), \eqref{tenscondeln} is direct. Moreover, the same arguments for proving the bridge relation and~\eqref{tensbri}, using Lemma~\ref{lem:cutbrinn} instead of Lemma~\ref{lem:cutbri} allow us to achieve the proof.
\end{proof}

Theorem \ref{theo:contensn} shows that the rank $D$ w-colored graph polynomial satisf\/ies the recurrence relation of contraction and cut of ordinary edges. For $n=3$, $\alpha_3=0$, we get Theorem \ref{theo:contens3} and $n=4$ leads to Theorem \ref{theo:contens4}. Concerning special edges, we have only discussed the bridge case. There are certainly some relations for trivial $p$-inner self-loops ($p=0,\dots, n$) but these relations are numerous and lengthy and do not add much to the discussion.

The reductions of the above polynomial \eqref{ttopflan} to $\mathfrak{T}'_{ \mathcal{G}_{\mathfrak{f}^0};\boldsymbol{\alpha}}$, $\mathfrak{T}''_{ \mathcal{G}_{\mathfrak{f}^0};\boldsymbol{\alpha}}$ and $\mathfrak{T}'''_{\mathcal{G}_{\mathfrak{f}^0};\boldsymbol{\alpha}}$ also satisfy the contraction/cut rule. Furthermore $\mathfrak{T}_{\mathcal{G}_{\mathfrak{f}^0};\boldsymbol{\alpha}}$ maps to Tutte polynomial by taking $n=1$ and by putting the variables~$z$,~$w$, $q$ and~$t$ to~1. Taking $n=2$ and using the mapping between the half-edged ribbon graphs and the rank~2 w-colored graph, we directly f\/ind the polynomial invariant on ribbon graphs with half-edges~\cite{remia}. An appropriate change of variable $s=z^{-1}$ gives the BR polynomial.

Consider a representative $\mathcal{G}_{\mathfrak{f}^0}$ of rank $n$ w-colored graph $\mathfrak{G}$.
In the polynomial invariant introduced in \eqref{ttopflan}, we can add other variables $y_i$ ($i=0,\dots, n$) for the number of $i$-bubbles of the boundary graph $\partial \mathcal{G}_{\mathfrak{f}^0}$. An extended form of the polynomial introduced in \eqref{ttopflan} is given by
\begin{gather}
\mathfrak{T}_{\mathfrak{G};\boldsymbol{\alpha}}(x,y,z,\{y_i\}_{i=0,\dots, n}) =\mathfrak{T}_{\mathcal{G}_{\mathfrak{f}^0};\boldsymbol{\alpha}}(x,y,z,\{y_i\}_{i=0,\dots, n})\nonumber \\
 \qquad{} =\sum_{A \Subset \mathcal{G}_{\mathfrak{f}^0}}
 (x-1)^{\rk( \mathcal{G}_{\mathfrak{f}^0})-\rk(A)}(y-1)^{n(A)}
z^{\frac{(n-1)(n+2)}{2}k(A)-\gamma_{n;\boldsymbol{\alpha}}(A)} \left(\prod_{i=0}^{n}y_i^{B^i(\partial(A))}\right).\label{ttopflanext}
\end{gather}

\begin{Definition}[multivariate form]
The multivariate form associated with \eqref{ttopflanext} is def\/ined by:
\begin{gather}
\widetilde{\mathfrak{T}}_{\mathfrak{G}}\big(x,\{\beta_e\},\{z_i\}_{i=2,\dots, n-1},\{y_i\}_{i=0,\dots, n},\{q_i\}_{i=3,\dots, n}\big)\nonumber\\
\qquad {}=
\widetilde{\mathfrak{T}}_{ \mathcal{G}_{\mathfrak{f}^0}}\big(x,\{\beta_e\},\{z_i\}_{i=2,\dots, n-1},\{y_i\}_{i=0,\dots, n},\{q_i\}_{i=3,\dots, n}\big)\nonumber \\ 
\qquad {}=
 \sum_{A \Subset \mathcal{G}_{\mathfrak{f}^0}}
 x^{\rk(A)}\left(\prod_{e\in A}\beta_e\right)
\left(\prod_{i=2}^{n}z_i^{B_{{\rm int}}^i(A)}\right)\left(\prod_{i=3}^{n}z_i^{B_{{\rm ext}}^i(A)}\right)
\left(\prod_{i=0}^{n}y_i^{B^i(\partial(A))}\right),\label{multi}
\end{gather}
for $\{\beta_e\}_{e\in \mathcal{E}}$ labeling the edges of the graph $ \mathcal{G}_{\mathfrak{f}^0}$.
\end{Definition}

This multivariate form extends the polynomial of Gurau introduced in~\cite{Gurau:2009tz} in an essential way. Indeed, in addition to the internal bubbles, we also deal with external ones and the multivariate form~\eqref{multi} is def\/ined on an extended category of graphs.

The following holds for all non loop edge $e$
\begin{gather*}
 \widetilde{\mathfrak{T}}_{\mathfrak{G}}=
\widetilde{\mathfrak{T}}_{ \mathfrak{G}\vee e} +x\beta_e \widetilde{\mathfrak{T}}_{\mathfrak{G}/e}.
\end{gather*}

\section{Example}
 Consider the graph $\mathcal{G}$ of Fig.~\ref{fig:cg2}. Let us prove that the polynomial invariant $\mathfrak{T}_{\mathcal{G};(\alpha_3,\alpha_4)}$ satisf\/ies the recurrence relation.

\begin{figure}[h] \centering
\includegraphics[angle=0, width=8cm, height=3cm]{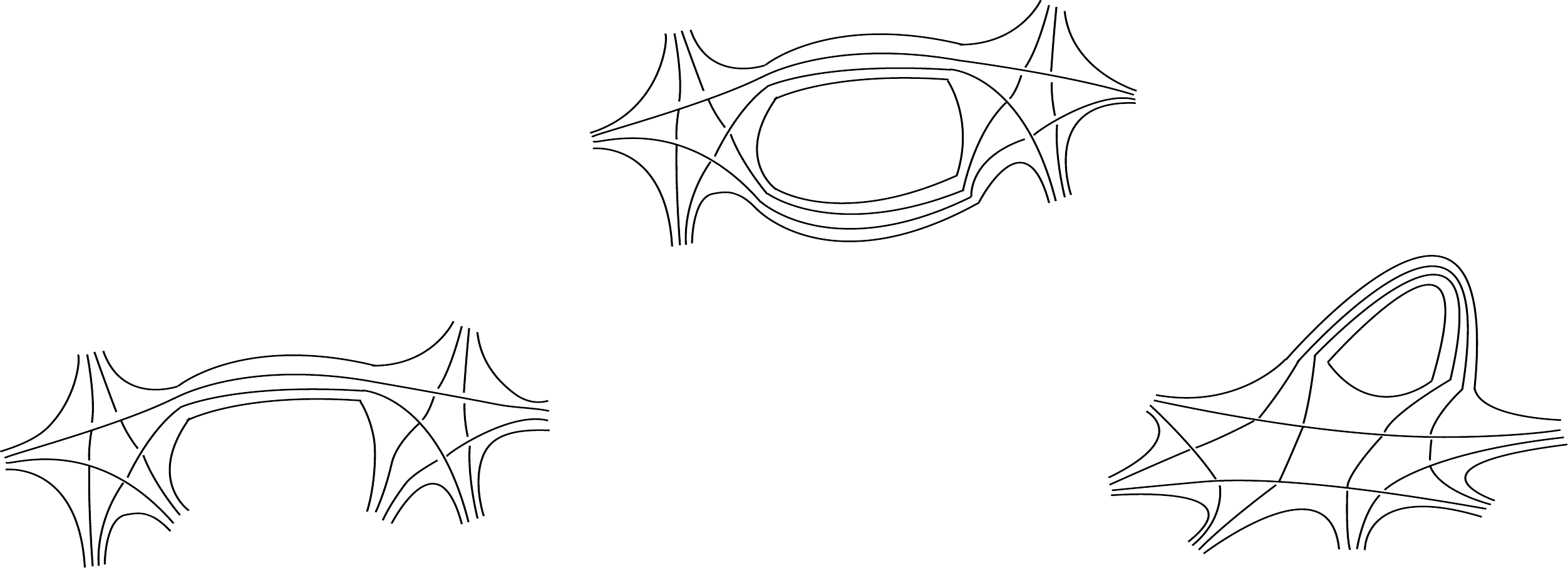}
\put(-107,82){\small{$e_1$}}
\put(-118,80){\small{$1$}}
\put(-96,80){\small{$\bar1$}}
\put(-138,77){\small{$0$}}
\put(-72,80){\small{$\bar0$}}
\put(-116,58){\small{$2$}}
\put(-96,59){\small{$\bar2$}}
\put(-127,46){\small{$3$}}
\put(-72,50){\small{$\bar3$}}
\put(-149,65){\small{$4$}}
\put(-62,68){\small{$\bar4$}}
\put(-204,32){\small{$1$}}
\put(-179,32){\small{$\bar1$}}
\put(-224,32){\small{$0$}}
\put(-158,35){\small{$\bar0$}}
\put(-201,10){\small{$2$}}
\put(-181,10){\small{$\bar2$}}
\put(-212,-3){\small{$3$}}
\put(-157,0){\small{$\bar3$}}
\put(-234,15){\small{$4$}}
\put(-147,20){\small{$\bar4$}}
\put(-43,37){\small{$1$}}
\put(-13,35){\small{$\bar1$}}
\put(-53,28){\small{$0$}}
\put(-3,25){\small{$\bar0$}}
\put(-73,10){\small{$4$}}
\put(-10,6){\small{$\bar4$}}
\put(-62,0){\small{$3$}}
\put(-28,-2){\small{$\bar3$}}
\put(-29,50){\small{$e_1$}}
\put(-190,35){\small{$e_1$}}
\put(-105,43){\small{$e_2$}}
\put(-205,-17){$\mathcal{G}_{\mathfrak{f}^0} \vee e_2$}
\put(-50,-17){$\mathcal{G}_{\mathfrak{f}^0}/e_2$}
\put(-105,30){$\mathcal{G}_{\mathfrak{f}^0}$}

\caption{A representative $ \mathcal{G}_{\mathfrak{f}^0}$ of a w-colored graph, the cut graph $ \mathcal{G}_{\mathfrak{f}^0}\vee e_2$ and the contracted graph $\mathcal{G}_{\mathfrak{f}^0}/e_2$
with respect to $e_2$.}\label{fig:cg2}
\end{figure}

Using the spanning c-subgraph summation, we get
\begin{gather*}
\mathfrak{T}_{\mathcal{G}_{\mathfrak{f}^0};(\alpha_3,\alpha_4)}(x,y,z,s,w,q,t) = (y-1)z^{28+7\alpha_3+5\alpha_4} s w^{11}q^{12} t^{6}
+ 2z^{31+10\alpha_3+6\alpha_4} s w^{14}q^{16} t^{8}\\
\hphantom{\mathfrak{T}_{\mathcal{G}_{\mathfrak{f}^0};(\alpha_3,\alpha_4)}(x,y,z,s,w,q,t) =}{}
 + (x-1)z^{37+10\alpha_3+10\alpha_4}s^2w^{20}q^{20} t^{10}.
\end{gather*}
We must check that
\begin{gather*}
\mathfrak{T}_{ \mathcal{G}_{\mathfrak{f}^0};(\alpha_3,\alpha_4)} = \mathfrak{T}_{ \mathcal{G}_{\mathfrak{f}^0}\vee e_2;(\alpha_3,\alpha_4)} +\mathfrak{T}_{ \mathcal{G}_{\mathfrak{f}^0}/e_2;(\alpha_3,\alpha_4)}.
\end{gather*}
This is the case, since
\begin{gather*}
\mathfrak{T}_{ \mathcal{G}_{\mathfrak{f}^0}\vee e_2;(\alpha_3,\alpha_4)}(x,y,z,s,w,q,t) =z^{31+10\alpha_3+6\alpha_4} s w^{14}q^{16} t^{8}\\
\hphantom{\mathfrak{T}_{ \mathcal{G}_{\mathfrak{f}^0}\vee e_2;(\alpha_3,\alpha_4)}(x,y,z,s,w,q,t) =}{}
 + (x-1)z^{37+10\alpha_3+10\alpha_4}s^2w^{20}q^{20} t^{10},
\end{gather*}
and
\begin{gather*}
\mathfrak{T}_{ \mathcal{G}_{\mathfrak{f}^0}/e_2;(\alpha_3,\alpha_4)}(x,y,z,s,w,q,t) =(y-1)z^{28+7\alpha_3+5\alpha_4} s w^{11}q^{12} t^{6}+z^{31+10\alpha_3+6\alpha_4} s w^{14}q^{16} t^{8}.
\end{gather*}

The dif\/ferent expressions of $\gamma_{4;(\alpha_3,\alpha_4)}(\cdot)$ in the above polynomial are $-19-7\alpha_3-5\alpha_4$, $-22-10\alpha_3-6\alpha_4$ and $-28-10\alpha_3-10\alpha_4$. Putting $\alpha_3=\frac{1}{2}$ and $\alpha_4=\frac{3}{2}$, each of these expressions is a negative integer.

As a f\/inal remark, the parameters $\alpha_i$ which label the $\gamma_n$ invariant are positive rational numbers. We may ask a unique prescription to determine those parameters to get a unique
$\gamma_n$. In the non integer rational and integer cases,
we do have the following issues to solve this problem:

In the non integer rational numbers case, we have tuned $\alpha_3 = 1/2$ and $\alpha_4= 3/2$ in the above example before getting $\gamma_n$ as an integer for all subgraphs. Thus, for each graph, we claim that we have $2^E$ constraints to solve for f\/inding $n-2$ rational coef\/f\/icients. The procedure might not in general admit a solution (usually the case when the system is over-determined, here this happens whenever $n < 2(2^{E-1} +1)$ or when the rank do not evolve fast enough with respect to the number of edges of the graph). Furthermore, this solution is graph dependent and we do not see how to generalize it to the arbitrary case. We quickly realize that, in contrast, restricted to integer coef\/f\/icients, the problem has several solutions and it is graph independent. In that case, the issue becomes dif\/ferent: we do have too many ways to def\/ine the invariant. We must therefore think about a new way to f\/ix this object.

Consider the parameters $\alpha_i$ as integers. Among possible invariants, one may consider the one with minimum value simply because it will be the apparently less complicated to write down. Interestingly, we face here a known problem in optimization analysis. This is an integer programming problem~\cite{Wolsey} of the form
\begin{gather*}
\min \big\{ Bx\leq 0, \, x\in \mathbb{Z}^{n+1}\big\},
\end{gather*}
where $B=(B^0,\dots, B^n)\in \mathbb{N}^{n+1}$ and $x=(a_0\alpha_0,\dots, a_n\alpha_n)$ with $a_i$ the binomial coef\/f\/icient appearing in $\gamma_n$. Thus studying this kind of problem could be interesting to f\/ix our parameters and uniquely determine the $\gamma_n$ invariant. This deserves to be elucidated.

\appendix

\section[Determination of the invariant by the sum of invariants of consecutive bubbles]{Determination of the invariant by the sum of invariants\\
of consecutive bubbles}
\label{appb}

Consider a representative $ \mathcal{G}_{\mathfrak{f}^0}$ of a rank $n$ w-colored stranded graphs $\mathfrak{G}$.

For $n=2,3$, this method is not dif\/ferent from the previous one.

Case $n=4$. The $4$-bubbles of $ \mathcal{G}_{\mathfrak{f}^0}$ are rank $3$ graphs. Each of them satisf\/ies \eqref{eq:rran3} and a sum over all these $4$-bubbles gives
\begin{gather*}
3\sum_{{\mathbf{b}}^4 \in \mathcal{B}^4}(V_{{\mathbf{b}}^4}-E_{{\mathbf{b}}^4})+2 \sum_{{\mathbf{b}}^4 \in \mathcal{B}^4}F_{{\rm int};{\mathbf{b}}^4} - (2+\alpha_3)\sum_{{\mathbf{b}}^4 \in \mathcal{B}^4}B^3_{{\mathbf{b}}^4}+3\alpha_3B^4\leq 0.
\end{gather*}
From Lemma~\ref{lemma:bulbesn}, one gets
\begin{gather} \label{eq:rank4}
3\times4(V-E) + 2\times3F_{{\rm int}} - 2(2+\alpha_3)B^3+3\alpha_3B^4\leq 0, \qquad -B^4+4\leq 0.
\end{gather}
Let us multiply the second relation in \eqref{eq:rank4} by a positive rational number $\alpha_4$ such that $- 2(2+\alpha_3)B^3+(3\alpha_3-\alpha_4)B^4$ is an integer. We obtain $-\alpha_4B^4+4\alpha_4\leq0$. Once again, we add this relation to the f\/irst relation in~\eqref{eq:rank4} and obtain
\begin{gather} \label{eq:ran4}
3\times4(V-E) + 2\times3F_{{\rm int}} - 2(2+\alpha_3)B^3+(3\alpha_3-\alpha_4)B^4\leq -4\alpha_4.
\end{gather}
As a remark, the relation \eqref{eq:ran4} is dif\/ferent from the relation~\eqref{eq:forran4} introduced in Section~\ref{subset:polymt}.

We can now f\/ind a general invariant by induction on~$n$.

For all rank $n\geq 4$ w-colored graph, we have
\begin{gather}
\frac{n!}{2}(V-E) + (n-1)!F_{{\rm int}} - (n-2)!(2+\alpha_3)B^3 \nonumber\\
\qquad{} +\sum_{k=4}^n (n-k+1)!\big[(k-1)\alpha_{k-1}-\alpha_k\big]B^k\leq - n\alpha_n.\label{eq:rann}
\end{gather}
The proof of equation \eqref{eq:rann} is by induction on $n$.
Suppose that \eqref{eq:rann} holds for all representative of a rank $n$ w-colored graph. Consider a representative of a rank $n+1$ w-colored graph and let us prove that
\begin{gather*} 
\frac{(n+1)!}{2}(V-E) + n!F_{{\rm int}} - (n-1)!(2+\alpha_3)B^3 \nonumber\\ 
\qquad{}+\sum_{k=4}^{n+1} (n-k+2)!\big[(k-1)\alpha_{k-1}-\alpha_k\big]B^k \leq - (n+1)\alpha_{(n+1)}.
\end{gather*}
The $(n+1)$-bubbles of $\mathcal{G}$ are rank $n$ graphs. Each of them satisf\/ies~\eqref{eq:rann} and a summation on all the $(n+1)$-bubbles of $\mathcal{G}$ gives
\begin{gather*}
\frac{n!}{2}\sum_{{\mathbf{b}}^{n+1} \in \mathcal{B}^{n+1}}(V_{{\mathbf{b}}^{n+1}}-E_{{\mathbf{b}}^{n+1}}) + (n-1)!F_{{\rm int};{\mathbf{b}}^{n+1}} - (n-2)!(2+\alpha_3)\sum_{{\mathbf{b}}^{n+1} \in \mathcal{B}^{n+1}}B^3_{{\mathbf{b}}^{n+1}} \nonumber\\
\qquad{} + \sum_{k=4}^n (n-k+1)!\big[(k-1)\alpha_{k-1}-\alpha_k\big]\sum_{{\mathbf{b}}^{n+1} \in \mathcal{B}^{n+1}}B^k_{{\mathbf{b}}^{n+1}} + n\alpha_nB^{n+1}\leq 0.
\end{gather*}
From Lemma \ref{lemma:bulbesn}, we have
\begin{gather}
(n+1)\frac{n!}{2}(V-E) + n(n-1)!F_{{\rm int}} - (n-1)(n-2)!(2+\alpha_3)B^3\nonumber\\
\qquad{} +\sum_{k=4}^n (n-k+2)(n-k+1)!\big[(k-1)\alpha_{k-1}-\alpha_k\big]B^k + n\alpha_nB^{n+1}\leq 0,\nonumber
 \\ -B^{n+1}+n+1\leq 0.\label{eq:rannpp}
\end{gather}

Let us multiply the second relation in \eqref{eq:rannpp} by a positive rational number $\alpha_{n+1}$ such as
$ - (n-1)(n-2)!(2+\alpha_3)B^3 +\sum\limits_{k=4}^n (n-k+2)(n-k+1)!\big[(k-1)\alpha_{k-1}-\alpha_k\big]B^k
 + (n\alpha_n-\alpha_{n+1})B^{n+1}$
is an integer.
We obtain
\begin{gather}\label{eq:add}
 -\alpha_{n+1}B^{n+1}+(n+1)\alpha_{n+1}\leq0.
\end{gather}
Adding \eqref{eq:add} to the f\/irst relation in \eqref{eq:rannpp} ends the proof.

We could have introduced a polynomial invariant using the expression~\eqref{eq:rannpp} and also it will satisfy similar properties as established in Theorem~\ref{theo:contensn}.

\subsection*{Acknowledgements}

Numerous discussions with Joseph Ben Geloun and Mahouton N.~Hounkonnou have been hugely benef\/icial for this work and gratefully acknowledged. The author acknowledges the support of Max-Planck Institute for Gravitational Physics, Albert Einstein Institute, and the Association pour la Promotion Scientif\/ique de l'Afrique. The ICMPA is also in partnership with the Daniel Iagolnitzer Foundation (DIF), France.

\vspace{-1mm}

\pdfbookmark[1]{References}{ref}
\LastPageEnding

\end{document}